\theoremstyle{plain}
  \newtheorem{theorem}{Theorem}[section]
  \newtheorem{proposition}[theorem]{Proposition}
  \newtheorem{lemma}[theorem]{Lemma}
  \newtheorem{corollary}[theorem]{Corollary}
\theoremstyle{definition}
  \newtheorem{definition}[theorem]{Definition}
  \newtheorem{example}[theorem]{Example}
  \newtheorem{question}[theorem]{Question}
  \newtheorem{problem}[theorem]{Problem}
 \theoremstyle{remark}
  \newtheorem{remark}[theorem]{Remark}
\numberwithin{equation}{section}
\def\NN{{\mathbb N}}
\def\ZZ{{\mathbb Z}}
\def\RR{{\mathbb R}}
\def\LLL{{\mathcal{L}}}
\def\AAA{{\mathcal{A}}}
\def\BBB{{\mathcal{B}}}
\def\DDD{{\mathcal{D}}}
\def\EEE{{\mathcal{E}}}
\def\III{{\mathcal{I}}}
\def\JJJ{{\mathcal{J}}}
\def\NNN{{\mathcal{N}}}
\def\PPP{{\mathcal{P}}}
\def\ZZZ{{\mathcal{Z}}}
\def\Jconn{{\mathcal{J}_{\mathrm{conn}}}}
\def\mm{\mathfrak{m}}
\def\xx{{\mathbf{x}}}
\def\UU{{\mathbf{U}}}
\def\Des{\mathrm{Des}}
\def\des{\mathrm{des}}
\def\maj{\mathrm{maj}}
\def\syz{\mathrm{syz}}
\def\conv{\mathrm{conv}}
\def\Hilb{\mathrm{Hilb}}
\def\Tor{\mathrm{Tor}}
\def\init{\mathrm{init}}
\def\weak{\mathrm{weak}}
\def\strict{\mathrm{strict}}
\def\Kdim{\mathrm{dim}}
\def\gr{\mathfrak{gr}}
\begin{document}

\title[$P$-partitions revisited]
{$P$-partitions revisited}

\author{Valentin F\'eray}
\email{feray@labri.fr}
\address{LaBRI\\
Universit\'e Bordeaux 1\\
351 Cours de la Lib\'eration\\
33400 Talence\\
France}

\author{Victor Reiner}
\email{reiner@math.umn.edu}
\address{School of Mathematics\\
University of  Minnesota\\
Minneapolis, MN 55455\\
USA}

%\date{April 2011}

\thanks{Second author supported by NSF grant DMS-1001933.}

\keywords{poset, $P$-partition, semigroup ring, Koszul algebra,
hooklength, hook formula, forest, major index, graphic zonotope,
graph associahedron, building set, nested set}

\subjclass{
06A07,
% 06A07 = Combinatorics of partially ordered sets
06A11,
% 06A11 = Algebraic aspects of posets,
52B20
% 52B20 = Lattice polytopes (including relations with commutative algebra and algebraic geometry)
}

%%%%%%%%%%%%%%%%%%%%%%%%%%%%%%%%%%%%%%%%%%%%%%%%%%%%%%%%%%
\begin{abstract}
We compare a traditional and non-traditional view on the
subject of $P$-partitions, leading to formulas counting
linear extensions of certain posets.
\end{abstract}
%%%%%%%%%%%%%%%%%%%%%%%%%%%%%%%%%%%%%%%%%%%%%%%%%%%%%%%%%%

\maketitle

% \tableofcontents

%%%%%%%%%%%%%%%%%%%%%%%%%%%%%%%%%%%%%%%%%%%%%%%%%%%%%%%%%%
\section{Introduction}
\label{intro-section}
%%%%%%%%%%%%%%%%%%%%%%%%%%%%%%%%%%%%%%%%%%%%%%%%%%%%%%%%%%

Our goal is to re-examine Stanley's theory of
$P$-partitions from a non-traditional viewpoint, one that
arose originally from ring-theoretic considerations
in \cite{BFLR}.  Comparing viewpoints, for example, gives
an application to counting linear extensions of certain posets.
We describe these viewpoints here, followed by this enumerative
application, and then give an indication of the further ring-theoretic results.

\subsection{Traditional viewpoint}
\label{sec:traditional-viewpoint}
Given a partial order $P$ on the set $\{1,2,\ldots,n\}$ 
a {\it weak $P$-partition} \cite[\S 4.5]{Stanley-EC} is a 
map $f: P \rightarrow \NN:=\{0,1,2,\ldots\}$
satisfying $f(i) \geq f(j)$ whenever $i <_P j$.

In Stanley's original work \cite{Stanley-thesis} and
that of A. Garsia \cite{Garsia}, 
it was important that one can express a weak $P$-partition $f$ uniquely as a sum 
$
f=\chi_{I_1}+\chi_{I_2}+\cdots+\chi_{I_{\max(f)}}
$
of indicator functions $\chi_{I_i}$ for a multiset of
nonempty, nested order ideals $I_i$ in $P$; specifically
$I_i:=\{j \in P: f(j) \geq i\}$.  An important
special case occurs when $f$ takes on each
value in $\{1,2,\ldots,n\}$ exactly once, so that
the nested sequence of order ideals 
$
I_1 \supset \cdots \supset I_n \supset I_{n+1}:=\varnothing
$
corresponds to a permutation $w=(w(1),\ldots,w(n))$ of $\{1,2,\ldots,n\}$ 
defined by $w(i)=I_i \setminus I_{i+1}$.  Such permutations $w$ are
called {\it linear extensions} of $P$ because the order $<_w$ given by 
$w(1) <_w \cdots <_w w(n)$ strengthens the partial order $P$
to a linear order.  

This has a geometric interpretation:
the weak $P$-partitions $f$ are the integer points inside a 
rational polyhedral cone in $\RR^n$ defined
by the inequalities $f_i \geq f_j \geq 0$ for $i <_P j$, 
and the set $\LLL(P)$ of all 
linear extensions of $P$ indexes the maximal simplicial
subcones in a unimodular triangulation of this $P$-partition cone.
The simplicial complex underlying this triangulation is
the order complex for the finite distributive lattice structure
on the set $\JJJ(P)$ of all order ideals in $P$;
see \cite{Stanley-poset-polytopes}.

\subsection{New viewpoint}
Here a much larger role is played by the subset $\Jconn(P) \subset \JJJ(P)$
consisting of all nonempty {\it connected} order ideals $J$ in $P$, that is,
those order ideals $J$ whose Hasse diagram is a connected
graph.  Say that two connected order ideals $J_1, J_2$ 
{\it intersect trivially} if either they are disjoint
or they are nested, that is, comparable under inclusion;  
otherwise say that they {\it intersect nontrivially}.  

It will be important
that one can express a $P$-partition $f$ uniquely as a sum 
\begin{equation}
\label{f-expressed-as-connected-ideals}
f=\chi_{J_1} + \chi_{J_2} + \cdots + \chi_{J_{\nu(f)}}
\end{equation}
of the indicator functions $\chi_{J_i}$ where $\{J_1,J_2,\ldots,J_{\nu(f)}\}$
is a multiset of nonempty connected order ideals in $P$ that
pairwise intersect trivially; specifically
one takes the $\{J_\ell\}_{\ell=1}^{\nu(f)}$ to be the multiset of
connected components of the Hasse diagrams for the
order ideals $I_i=\{j \in P: f(j) \geq i\}$ mentioned earlier.

Geometrically, this corresponds to a different (non-unimodular) 
triangulation of the $P$-partition cone.  This triangulation
is intimately related to the refinement of the normal
fan of a {\it graphic zonotope} by the normal fan of one
of Carr and Devadoss's {\it graph-associahedra} \cite{CarrDevadoss};
see Section~\ref{triangulation-section}.

\subsection{Counting linear extensions}
Computing the number $|\LLL(P)|$ of linear extensions of $P$
for general posets $P$ is known to be a $\# P$-hard problem
by work of Brightwell and Winkler \cite{BrightwellWinkler}.  However, for the
class of posets which we are about to define, a formula
for  $|\LLL(P)|$ will follow easily 
from the above considerations.

Say that a finite poset $P$ is a {\it forest with duplications}
if it can be constructed from one-element posets by iterating
the following three operations:

\vskip.1in
\noindent
{\sf Disjoint union:}
Given two posets $P_1, P_2$, form 
their {\it disjoint union} $P_1 \sqcup P_2$,
in which all elements of $P_1$ are incomparable
to all elements of $P_2$.

\vskip.1in
\noindent
{\sf Hanging:}
Given two posets $P_1, P_2$
and any element $a$ in $P_1$, form a new poset by {\it hanging} $P_2$ {\it below
$a$ in $P_1$}, that is, add to the disjoint union $P_1 \sqcup P_2$ all the order
relations $p_2 < b$ for every $p_2$ in $P_2$ and $b$ in $P_1$ with $b \geq_{P_1} a$.

\vskip.1in
\noindent
{\sf Duplication of a hanger:}
Say that an element $a$ in $P$ is a {\it hanger}
if $P$ can be formed by hanging the
nonempty subposet $P_2:=P_{<a}$ below $a$ in the subposet
$P_1:=P \setminus P_{<a}$.
Equivalently, $a$ is hanger in $P$ 
if $P_{<a}$ is nonempty and every path in the Hasse 
diagram of $P$ from an element of $P_{< a}$
to an element of $P \setminus P_{\leq a}$ must pass through $a$.
Then one can form the {\it duplication of the hanger $a$ in $P$} 
with duplicate element $a'$:
add to the disjoint union $P \cup \{a'\}$ all order relations
$p < a'$ (respectively $a'<p$) whenever $p <_P a$ (respectively
$a <_P p$).
\vskip.1in
\noindent 
Note that when one disallows the duplication-of-hanger operation
from the above list of constructions, one obtains the subclass of
{\it forest posets}, that is,
posets in which every element is covered by at most one other element.

\begin{figure}
\epsfxsize=100mm
\epsfbox{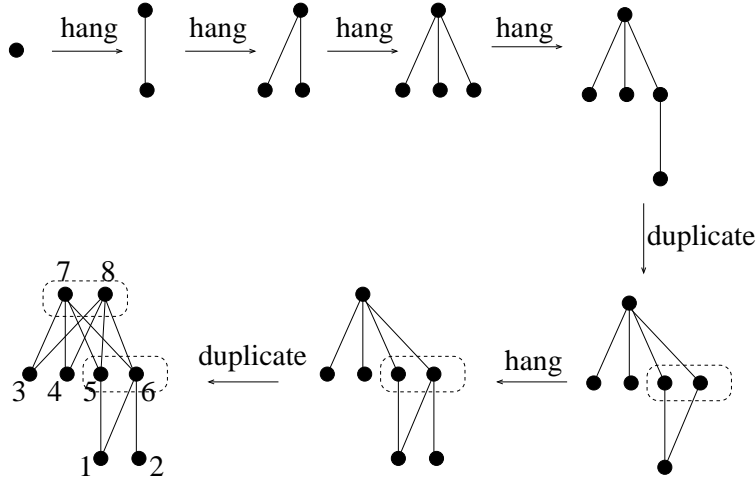}
\caption{A duplicated forest built by a sequence of hanging
and duplication operations.}
\label{duplicated-forest-figure}
\end{figure}

For the sake of stating our first main result counting linear
extensions, we define the notion of a naturally labelled poset $P$:
it means that $i <_P j$ implies $i <_\ZZ j$.
Let us also recall the {\it major index} statistic on a
permutation $w=(w(1),\ldots,w(n))$ defined by
$$
\maj(w):=\sum_{\substack{i=1,2,\ldots,n-1:\\w(i) > w(i+1)}} i
$$
and these standard $q$-analogues of the number $n$
and the factorial $n!$:
$$
\begin{aligned}[]
[n]_q &:=1+q+q^2+\cdots+q^{n-1} = \frac{1-q^n}{1-q}\\
[n]!_q &:=[1]_q [2]_q \cdots [n-1]_q [n]_q.\\
\end{aligned}
$$

We give a proof of the following result by inclusion-exclusion
in Section~\ref{first-forest-proof-section}, and 
then a second proof via commutative algebra in
Section~\ref{second-forest-proof-section}.

\begin{theorem}
\label{thm:duplicated-forest-extensions}
Let $P$ be a naturally labelled forest with duplications on
$\{1,2,\ldots,n\}$.  Then
\begin{equation}
\label{duplicated-forest-q-formula}
\sum_{w \in \LLL(P)} q^{\maj(w)}=
[n]!_q \cdot {\prod_{\{J_1,J_2\} \in \Pi(P)} 
         \left[ \,\,\, |J_1|+|J_2| \,\,\, \right]_q } \Bigg /
               {\prod_{J \in \Jconn(P)} \left[ \,\,\, |J| \,\,\, \right]_q }
\end{equation}
where the product in the numerator runs over all
the set $\Pi(P)$ consisting of all pairs $\{J_1,J_2\}$ 
of connected order ideals of $P$ that intersect nontrivially.
In particular, upon setting $q=1$, one has
\begin{equation}
\label{duplicated-forest-formula}
| \LLL(P) | =
n! \cdot {\prod_{\{J_1,J_2\} \in \Pi(P)}  (|J_1|+|J_2|) } \Bigg /
               {\prod_{J \in \Jconn(P)} |J| }.
\end{equation}
\end{theorem}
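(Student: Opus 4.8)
The plan is to relate the left-hand side to the Hilbert series of the $P$-partition semigroup ring and compute that series in two ways, exploiting the ``new viewpoint'' decomposition \eqref{f-expressed-as-connected-ideals}. Recall that the classical theory gives
\[
\sum_{f \text{ weak } P\text{-partition}} q^{|f|}
  = \frac{\sum_{w \in \LLL(P)} q^{\maj(w)}}{(1-q)(1-q^2)\cdots(1-q^n)},
\]
where $|f| := \sum_{i} f(i)$ and $|f|$ is graded so that $\chi_I$ has degree $|I|$; this is Stanley's fundamental transfer lemma for the order-complex triangulation described in Section~\ref{sec:traditional-viewpoint}. Thus it suffices to show
\[
\sum_{f \text{ weak } P\text{-partition}} q^{|f|}
  = \frac{[n]!_q}{(1-q)(1-q^2)\cdots(1-q^n)}
    \cdot \frac{\prod_{\{J_1,J_2\}\in\Pi(P)} [\,|J_1|+|J_2|\,]_q}
               {\prod_{J\in\Jconn(P)} [\,|J|\,]_q},
\]
i.e. that the generating function for weak $P$-partitions equals
\[
\frac{\prod_{\{J_1,J_2\}\in\Pi(P)} (1-q^{|J_1|+|J_2|})}
     {\prod_{J\in\Jconn(P)} (1-q^{|J|})} \cdot \frac{1}{(1-q)\cdots(1-q^n)} \cdot (1-q)\cdots(1-q^n)
\]
after the factorials cancel; more cleanly, the target is to prove the closed product form
\[
\sum_{f} q^{|f|}
  = \frac{\prod_{\{J_1,J_2\}\in\Pi(P)} (1-q^{|J_1|+|J_2|})}
         {\prod_{J\in\Jconn(P)} (1-q^{|J|})}.
\]

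First I would establish this product formula by induction on the number of elements $n$, following the recursive definition of a forest with duplications. By \eqref{f-expressed-as-connected-ideals} every weak $P$-partition is a sum $\sum_i \chi_{J_i}$ over a multiset of connected order ideals that pairwise intersect trivially, so $\sum_f q^{|f|}$ is the generating function for multisets drawn from $\Jconn(P)$ subject to the pairwise-trivial-intersection constraint. For the \textsf{disjoint union} step $P = P_1 \sqcup P_2$, both $\Jconn$ and the constraint split as a product, so both sides multiply. For the \textsf{hanging} step, I would check that $\Jconn(P)$ consists of: the connected ideals of $P_2$; the connected ideals $J$ of $P_1$ together with $J \cup P_2$ whenever $J$ contains the principal filter generated by $a$... — the bookkeeping here, tracking exactly which new connected ideals appear and which new nontrivially-intersecting pairs $\Pi(P)$ acquires, is where the argument has real content. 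For the \textsf{duplication of a hanger $a$} step, duplicating $a$ to $a'$ should multiply $\sum_f q^{|f|}$ by a factor that matches the change in the product: each connected ideal $J$ through $a$ spawns a twin through $a'$ and a larger one containing both, and the newly created nontrivial pairs are exactly $\{J, J'\}$ with $J$ the $a$-version and $J'$ the $a'$-version of ``the same'' ideal. This is the step I expect to be the main obstacle, because one must verify that the generating-function identity is preserved exactly, including the delicate interaction between the trivial-intersection constraint on multisets and the combinatorial factors $(1-q^{|J_1|+|J_2|})$.

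An alternative, cleaner route — which I would pursue if the direct induction gets unwieldy — is inclusion–exclusion directly on the multiset generating function, as the excerpt promises in Section~\ref{first-forest-proof-section}: write the weighted count of multisets of connected ideals with \emph{no} nontrivially-intersecting pair as an alternating sum over which ``bad'' nontrivial pairs are forced to occur, and show that for a forest with duplications the poset of such bad configurations has a product structure causing almost total cancellation, leaving precisely $\prod_{J}(1-q^{|J|})^{-1}\prod_{\{J_1,J_2\}}(1-q^{|J_1|+|J_2|})$. Either way, once the product formula for $\sum_f q^{|f|}$ is in hand, multiplying by $(1-q)\cdots(1-q^n) = (1-q)^n [n]!_q$ and invoking Stanley's transfer lemma yields \eqref{duplicated-forest-q-formula}, and setting $q=1$ — using $\lim_{q\to 1}[m]_q = m$ — gives \eqref{duplicated-forest-formula}.
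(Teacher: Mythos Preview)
Your inclusion--exclusion route is exactly the paper's first proof, and your reduction via Stanley's transfer lemma to the product formula
\[
\sum_{f} q^{|f|}
  = \frac{\prod_{\{J_1,J_2\}\in\Pi(P)} (1-q^{|J_1|+|J_2|})}
         {\prod_{J\in\Jconn(P)} (1-q^{|J|})}
\]
is precisely how the paper finishes. What you are missing is the concrete structural input that makes the inclusion--exclusion collapse to a product, which you only gesture at with ``the poset of such bad configurations has a product structure.'' The paper isolates this as a separate lemma (Lemma~\ref{lemma:duplicated-forest-structure}): for a forest with duplications, $\Pi(P)$ is \emph{exactly} the set of pairs $\{P_{\leq a},P_{\leq a'}\}$ indexed by duplication pairs $\{a,a'\}\in\DDD(P)$, and these duplication pairs are themselves pairwise disjoint. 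Consequently the events ``the multiset contains the bad pair $\{P_{\leq a},P_{\leq a'}\}$'' are independent across $\DDD(P)$, and the inclusion--exclusion over subsets $\EEE\subseteq\DDD(P)$ factors immediately into $\prod_{\{a,a'\}}(1-t^2\xx^{P_{\leq a}}\xx^{P_{\leq a'}})$ over the free multiset generating function $\prod_{J}(1-t\xx^J)^{-1}$. That lemma is the entire combinatorial content; it is proved by a short induction on the construction of $P$ (so your instinct to induct is right, just misplaced --- induct on the structural description of $\Jconn(P)$ and $\Pi(P)$, not directly on the generating function).

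One small point you glossed over: converting the product of $(1-q^m)$ factors into the $[n]!_q$-form requires the powers of $(1-q)$ to cancel, i.e.\ the identity $|\Jconn(P)|-|\Pi(P)|=n$. This too falls out of the structural lemma, since $|\Jconn(P)|=n+|\DDD(P)|$ and $|\Pi(P)|=|\DDD(P)|$.
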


\noindent
The products appearing in Theorem~\ref{thm:duplicated-forest-extensions} are
much more explicit than they first appear, as it will
be shown (see Lemma~\ref{lemma:duplicated-forest-structure}) 
that for a forest $P$ with duplications, the two sets
$\Jconn(P)$ and $\Pi(P)$ are easily written down in terms of
the {\it principal ideals} $P_{\leq p}$ and 
the {\it duplication set} $\DDD(P)$ consisting of all duplication 
pairs $\{a,a'\}$ that were created during the various 
steps that build $P$:
\begin{equation}
\label{forest-with-duplication-ideal-description}
\begin{aligned}
\Jconn(P) &= \{ P_{\leq p}\}_{ p \in P } \quad \sqcup \quad
            \{ P_{\leq a,a'} \}_{\{ a,a'\} \in \DDD(P) } \\
\Pi(P) &= \{ \,\,\, \{ P_{\leq a}, P_{\leq a'} \}\,\,\,  \}_{\{ a,a'\} \in \DDD(P)}. \\
\end{aligned}
\end{equation}

\noindent
Figure~\ref{duplicated-forest-figure} 
shows an example of a {\it forest with duplications} $P$
built by a sequence of hangings and duplications; 
no disjoint union operations are used, yielding only one
connected component.  Its duplication set 
$
\DDD(P)=\{ \{5,6\}, \{7,8\} \}
$
is shown dotted.  One has the following list of cardinalities $|J|$ of connected order ideals $J$
\vskip.1in
\noindent
%\begin{tabular}{|c|c|}\hline
% & \\
%$J \in \Jconn(P)$ & $|J|$ \\
% & \\\hline\hline
%$P_{\leq 1}$ & $1$\\\hline
%$P_{\leq 2}$ & $1$\\\hline
%$P_{\leq 3}$ & $1$\\\hline
%$P_{\leq 4}$ & $1$\\\hline
%$P_{\leq 5}$ & $2$\\\hline
%$P_{\leq 6}$ & $3$\\\hline
%$P_{\leq 7}$ & $7$\\\hline
%$P_{\leq 8}$ & $7$\\\hline
%$P_{\leq 5} \cup P_{\leq 6}$ & $4$\\\hline
%$P_{\leq 7} \cup P_{\leq 8}$ & $8$\\\hline
%\end{tabular}
$$
\begin{aligned}
&\begin{tabular}{|c||c|c|c|c|c|c|c|c|c|c|}\hline
 & & & & & & & &  \\
$J \in \Jconn(P)$ & 
$P_{\leq 1}$ & 
$P_{\leq 2}$ & 
$P_{\leq 3}$ & 
$P_{\leq 4}$ & 
$P_{\leq 5}$ &
$P_{\leq 6}$ &
$P_{\leq 7}$ &
$P_{\leq 8}$ \\
 & & & & & & & &  \\\hline
$|J|$ & $1$ & $1$ & $1$ & $1$  & $2$ & $3$ & $7$ & $7$ \\\hline
\end{tabular} \\
&\begin{tabular}{|c||c|c|}\hline
 & & \\
$J \in \Jconn(P)$ & $P_{\leq 5} \cup P_{\leq 6}$ & 
$P_{\leq 7} \cup P_{\leq 8}$ \\
 & & \\ \hline
$|J|$ & $4$ & $8$\\\hline
\end{tabular}
\end{aligned}
$$
and this data on the pairs in $\Pi(P)$
$$
\begin{tabular}{|c||c|c|}\hline
 & & \\
$\{J_1,J_2\} \in \Pi(P)$ 
& $\{ P_{\leq 5}, P_{\leq 6} \}$  
& $\{ P_{\leq 7}, P_{\leq 8} \}$ \\
 & & \\ \hline
$|J_1|+|J_2|$ & $2+3=5$ & $7+7=14$ \\\hline
\end{tabular}
$$

\vskip.1in
\noindent
Consequently, Theorem~\ref{thm:duplicated-forest-extensions}
implies that 
$$
\begin{aligned}
\sum_{w \in \LLL(P)} q^{\maj(w)} 
&=\frac{ [8]!_q }
    {[1]_q \cdot [1]_q \cdot [1]_q \cdot [1]_q \cdot [2]_q \cdot [3]_q \cdot [7]_q \cdot [7]_q }  
      \cdot \frac{ [5]_q \cdot [14]_q }{ [4]_q \cdot [8]_q }\\
&=\frac{[5]_q \cdot [5]_q \cdot [6]_q \cdot [14]_q}
          {[7]_q} 
= [2]_{q^7} \cdot [5]_{q} \cdot [5]_{q} \cdot [6]_q  \\
   \end{aligned}
$$
and upon setting $q=1$, one obtains
$$
|\LLL(P)|
= 2 \cdot 5 \cdot 5 \cdot 6 = 300.
$$
This example has been checked using the software {\emph sage} \cite{sage}, see 

{\tt http://www.sagenb.org/home/pub/2701/}.

A special case of Theorem~\ref{thm:duplicated-forest-extensions}
is well-known, namely when the forest
has {\it no duplications}, and the set $\Pi(P)$ is empty.
In this case, one simply has a {\it forest poset}.
Then equation \eqref{duplicated-forest-formula}
becomes Knuth's well-known {\it hook formula for 
linear extensions of forests} \cite[\S 5.1.4 Exer. 20]{Knuth},
and  equation \eqref{duplicated-forest-q-formula}
becomes Bjorner and Wachs' more general
{\it major index $q$-hook formula for forests}
\cite[Theorem 1.2]{BjornerWachs}.
The derivation of these two special cases
from consideration of $P$-partition rings was already pointed 
out in \cite[\S 6]{BFLR};  see also
Examples~\ref{maj-forest-formula-example} and
\ref{maj-forest-with-duplication-formula-example} below.

\subsection{The ring of weak $P$-partitions}

Although Theorem~~\ref{thm:duplicated-forest-extensions}
has a simple combinatorial proof, it was not our
original one.  We were motivated from
trying to understand the structure of the {\it affine semigroup ring}
$R_P$ of $P$-partitions,the 
subalgebra of the polynomial ring $k[x_1,\ldots,x_n]$
spanned $k$-linearly by the monomials
$\xx^f:=x_1^{f(1)} \cdots x_n^{f(n)}$ as $f$ runs through all
weak $P$-partitions.  In \cite{BFLR} this was the ring denoted $R^{\mathrm{wt}}_P$.
There it was noted that a minimal generating set as
an algebra is given by the monomials $\xx^J:=\prod_{j \in J} x_j$
as $J$ runs through the set $\Jconn(P)$ of nonempty connected order ideals
of $P$.  We extend this to the following result in
Section~\ref{presentations-section}.

\begin{theorem}
\label{thm:minimal-presentation}
For any poset on $\{1,2,\ldots,n\}$ and any field $k$,
the $P$-partition ring $R_P$ has minimal presentation 
$$
0 \rightarrow I_P \longrightarrow  S
 \overset{\varphi}{\longrightarrow} R_P \rightarrow 0
$$
in which the polynomial algebra 
$S=k[U_{J}]_{J \in \Jconn(P)}$ maps to $R_P$ via
$U_J \overset{\varphi}{\longmapsto} \xx^J$, and the kernel
ideal $I_P$ has a minimal generating set
indexed by $\{J_1,J_2\}$ in $\Pi(P)$, consisting
of binomials 
\begin{equation}
\label{P-partition-syzygy}
\syz_{J_1,J_2}:=U_{J_1} U_{J_2} - 
  U_{J_1 \cup J_2} \cdot U_{J^{(1)}}  U_{J^{(2)}} \cdots  U_{J^{(t)}}
\end{equation}
where the intersection $J_1 \cap J_2$ has connected component 
ideals $J^{(1)} \sqcup \cdots \sqcup J^{(t)}$.
\end{theorem}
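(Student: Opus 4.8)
The plan is to proceed in three stages: first verify that each binomial $\syz_{J_1,J_2}$ actually lies in $I_P$; then show these binomials generate $I_P$; and finally show they are a *minimal* generating set, i.e.\ none is redundant. For the first stage, observe that if $J_1,J_2$ intersect nontrivially with $J_1\cap J_2 = J^{(1)}\sqcup\cdots\sqcup J^{(t)}$, then $J_1\cup J_2$ is again a connected order ideal (since $J_1\cap J_2\neq\varnothing$), each $J^{(i)}$ is a connected order ideal, and as functions $\chi_{J_1}+\chi_{J_2} = \chi_{J_1\cup J_2} + \chi_{J_1\cap J_2} = \chi_{J_1\cup J_2} + \sum_i \chi_{J^{(i)}}$; applying $\varphi$ sends both monomials of $\syz_{J_1,J_2}$ to $\xx^{\chi_{J_1}+\chi_{J_2}}$, so $\syz_{J_1,J_2}\in I_P$.

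For the second stage — that these binomials generate $I_P$ — I would use the uniqueness of the decomposition \eqref{f-expressed-as-connected-ideals}. The toric ideal $I_P$ is spanned by binomials $\UU^{\alpha}-\UU^{\beta}$ where $\alpha,\beta$ are multisets of connected order ideals with $\sum_{J}\alpha_J\chi_J = \sum_J \beta_J \chi_J$ as functions. The strategy is a rewriting/confluence argument: define the *canonical form* of a monomial $\UU^{\alpha}$ to be $\UU^{\gamma}$, where $\gamma$ is the unique multiset of pairwise-trivially-intersecting connected ideals whose indicator functions sum to $f:=\sum_J\alpha_J\chi_J$, as guaranteed by \eqref{f-expressed-as-connected-ideals}. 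Each relation $\syz_{J_1,J_2}$ lets us replace a factor $U_{J_1}U_{J_2}$ (with $J_1,J_2$ intersecting nontrivially) by $U_{J_1\cup J_2}\prod_i U_{J^{(i)}}$, which strictly decreases some natural measure of "nontriviality" of the multiset (for instance $\sum_{J}\alpha_J|J|^2$ stays fixed while the number of nontrivially-intersecting pairs counted with multiplicity drops — one must check a suitable monovariant). Hence repeated application of the $\syz_{J_1,J_2}$ terminates at the canonical form. Since two monomials $\UU^\alpha,\UU^\beta$ map to the same element of $R_P$ iff they have the same $f$ iff they have the same canonical form, the ideal generated by the $\syz_{J_1,J_2}$ already contains $\UU^\alpha-\UU^\beta$, so it equals $I_P$.

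For minimality, I would pass to the associated graded / use a fine multigrading: $S$ and $R_P$ are graded by the group $\ZZ^n$ via $\deg U_J = \sum_{j\in J} e_j$, and $I_P$ is a multigraded ideal, so a minimal generating set is obtained by choosing, in each multidegree $f\in\ZZ^n$, a basis of $(I_P)_f \big/ (\mathfrak{m}\cdot I_P)_f$ where $\mathfrak{m}=(U_J)$. The claim reduces to: (i) in the multidegree $f = \chi_{J_1}+\chi_{J_2}$ of a nontrivially-intersecting pair, the class of $\syz_{J_1,J_2}$ is nonzero mod $\mathfrak{m} I_P$ and these classes over all such pairs sharing that multidegree are linearly independent; and (ii) no generators are needed in any other multidegree. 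Part (ii) should follow because a minimal syzygy in multidegree $f$ forces $f$ to be expressible as a sum of connected-ideal indicators in (at least) two distinct "minimal" ways, and a short combinatorial analysis — using that $J_1\cup J_2$ and the components of $J_1\cap J_2$ are the *only* way to resolve a single nontrivial intersection — pins $f$ down to the form $\chi_{J_1}+\chi_{J_2}$. I expect part (ii), the verification that no spurious minimal generators hide in higher multidegrees, to be the main obstacle: it amounts to proving that the only "primitive" binomial relations are the $\syz_{J_1,J_2}$, which is where one genuinely needs the poset structure (connectedness of unions of overlapping connected ideals, and the order-ideal property) rather than soft toric-algebra generalities. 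A clean way to organize it is to show directly that any binomial $\UU^\alpha-\UU^\beta\in I_P$ with $\alpha,\beta$ having disjoint support decomposes, via a single $\syz_{J_1,J_2}$ applied to $\UU^\alpha$, into a combination lying in $\mathfrak{m}\cdot I_P$ unless $\alpha=\{J_1,J_2\}$ exactly.
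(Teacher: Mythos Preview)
Your Stage~1 is fine and matches the paper. For Stage~2 (generation), the rewriting approach is in the right spirit but the proposed monovariant is broken: the quantity $\sum_J \alpha_J |J|^2$ does \emph{not} stay fixed under a rewrite (a short computation shows it can increase by $2|J_1\setminus J_2|\cdot|J_2\setminus J_1|$ minus a correction for disconnectedness of $J_1\cap J_2$), and it is not clear that the count of nontrivially-intersecting pairs strictly drops, since the new ideal $J_1\cup J_2$ may acquire nontrivial intersections with other members of the multiset that $J_1$ and $J_2$ individually lacked. A correct monovariant can be found, but doing so essentially amounts to constructing a monomial order, which is exactly what the paper does. The paper defines an explicit monomial order $\preceq$ on $S$ under which $\init_\preceq(\syz_{J_1,J_2}) = U_{J_1}U_{J_2}$, and then compares Hilbert series: the unique decomposition of Proposition~\ref{P-partition-expressions-prop}(ii) shows that $S/I^\init_P$ and $R_P=S/\ker\varphi$ share the same $\NN^n$-graded Hilbert series, so the chain of surjections $S/I^\init_P \twoheadrightarrow S/\init_\preceq(I_P) \twoheadrightarrow S/\init_\preceq(\ker\varphi)$ collapses to isomorphisms. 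This gives $I_P=\ker\varphi$ and, for free, that the $\syz_{J_1,J_2}$ form a Gr\"obner basis---no separate termination argument needed.

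For Stage~3 (minimality), your multigraded $(I_P)_f/(\mathfrak{m} I_P)_f$ analysis is conceptually sound but much heavier than necessary, and the part you correctly flag as the ``main obstacle'' (ruling out primitive binomials in other multidegrees) is genuinely awkward to carry out in that framework. The paper's argument is a two-line contradiction: if $\syz_{J_1,J_2}$ were an $S$-linear combination of the remaining $\syz_{K_1,K_2}$, then the monomial $U_{J_1}U_{J_2}$ would have to be divisible by one of the two monomials in some $\syz_{K_1,K_2}$. Since $U_{J_1}U_{J_2}$ is quadratic, this forces either $\{K_1,K_2\}=\{J_1,J_2\}$, or $K_1\cap K_2$ is connected with $\{K_1\cup K_2,\,K_1\cap K_2\}=\{J_1,J_2\}$; both are impossible, the latter because $J_1,J_2$ are incomparable under inclusion.
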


\vskip.1in
\noindent
{\bf Example.}
For the poset in Figure~\ref{duplicated-forest-figure}, 
the presentation of Theorem~\ref{thm:minimal-presentation} is $R_P=S/I_P$, where
\begin{equation}
\label{example-polynomial-algebra}
S=k\left[U_1,
    U_2,
    U_3,
    U_4,
    U_{15},
    U_{126},
    U_{1234567},
    U_{1234568},
    U_{1256},
    U_{12345678}\right]
\end{equation}
and $I_P$ is the ideal of $S$ generated by
$$
\begin{aligned}
U_{15} U_{126} &- U_{1256} U_{1}, \\
U_{1234567} U_{1234568} &- U_{12345678} U_{1256} U_{3} U_{4}
\end{aligned}
$$

It is not hard to see 
(and explained in Corollary~\ref{associated-graded-Hilbert-series-corollary})
how the various generating functions for
(weak) $P$-partitions turn into Hilbert series calculations
for $R_P$.  This suggests trying to understand the structure
of $R_P$ in order to calculate its Hilbert series.
One natural situation where this follows easily is when
$R_P \cong S/I_P$ gives a {\it complete intersection presentation},
that is, the Krull dimension $n$ of $R_P$ plus the 
size $|\Pi(P)|$ of the minimal generating set for $I_P$ 
sums to the Krull dimension $|\Jconn(P)|$ of $S$.
The forward implication in the following combinatorial
characterization of the complete intersection case
is proven in Section~\ref{second-forest-proof-section}, and used
to give our second (but historically first) 
proof of Theorem~~\ref{thm:duplicated-forest-extensions}:

\begin{theorem}
\label{thm:c.i.-characterization}
A poset $P$ on $\{1,2,\ldots,n\}$ is a forest with duplications
if and only if $R_P=S/I_P$ is a complete intersection presentation.
\end{theorem}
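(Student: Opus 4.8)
The plan is to convert the complete intersection hypothesis into a numerical identity and then handle the two implications separately, the forward one by quoting Lemma~\ref{lemma:duplicated-forest-structure} and the backward one by induction. By Theorem~\ref{thm:minimal-presentation} the ideal $I_P$ is minimally generated by $|\Pi(P)|$ binomials, the polynomial ring $S$ has Krull dimension $|\Jconn(P)|$, and the domain $R_P$ has Krull dimension $n$ --- the last because the $P$-partition cone is full-dimensional, for instance it contains the $n$-dimensional simplicial subcone spanned by $e_{w(1)},\ e_{w(1)}+e_{w(2)},\ \ldots,\ e_{w(1)}+\cdots+e_{w(n)}$ for any linear extension $w$ of $P$. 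Hence ``$R_P=S/I_P$ is a complete intersection presentation'' means exactly $|\Jconn(P)| = n + |\Pi(P)|$, while Krull's height theorem always gives $|\Jconn(P)| \le n + |\Pi(P)|$. I would also observe at once that $|\Jconn(\cdot)|$, $|\Pi(\cdot)|$ and the number of elements are additive over the connected components of $P$ --- a pair of connected order ideals that intersect nontrivially must lie inside a single component --- and that a poset is a forest with duplications exactly when each of its connected components is; so throughout one may assume $P$ is connected.

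The implication ``forest with duplications $\Rightarrow$ complete intersection'' is then read off from Lemma~\ref{lemma:duplicated-forest-structure}: formula~\eqref{forest-with-duplication-ideal-description} writes $\Jconn(P)$ as the disjoint union of the $n$ distinct principal ideals $P_{\leq p}$ with the $|\DDD(P)|$ ideals $P_{\leq a,a'}$, and identifies $\Pi(P)$ with the $|\DDD(P)|$ pairs $\{P_{\leq a},P_{\leq a'}\}$. Therefore $|\Jconn(P)| = n + |\DDD(P)| = n + |\Pi(P)|$.

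For the converse I would induct on $n$. The crux is the claim that a connected poset $P$ with $n\ge 2$ satisfying $|\Jconn(P)| = n + |\Pi(P)|$ admits a \emph{reduction move}: either a pair $\{a,a'\}$ with $a'$ the duplicate of a hanger $a$ in $P\setminus\{a'\}$, so that $P$ is a duplication of $P\setminus\{a'\}$; or a nonempty order ideal $Q\subsetneq P$ together with an element $m\in P\setminus Q$ realizing $P$ as the hanging of $Q$ below $m$ in $P\setminus Q$. Granting this, the smaller posets --- $P\setminus\{a'\}$ in the first case, $Q$ and $P\setminus Q$ in the second --- are again complete intersections once one distributes over their connected components; checking this amounts to computing how $\Jconn(\cdot)$ and $\Pi(\cdot)$ transform under a single duplication or a single hanging, which is essentially what the proof of Lemma~\ref{lemma:duplicated-forest-structure} establishes step by step. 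By the inductive hypothesis the smaller posets are forests with duplications, and one last duplication or hanging rebuilds $P$; combined with the previous paragraph this gives the equivalence.

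The main obstacle is exactly the existence of a reduction move, and the complete intersection hypothesis is genuinely needed for it. For example the poset $P$ on $\{1,2,3,4,5\}$ with covering relations $1\lessdot 3$, $1\lessdot 4$, $2\lessdot 4$, $2\lessdot 5$ is connected, has no duplicatable pair (no two distinct elements share both their strict lower set and their strict upper set), and has no hung piece (its only non-maximal elements are $1$ and $2$, and the elements strictly above $1$, respectively above $2$, form a two-element antichain with no least element); yet it fails the identity, since one computes $|\Jconn(P)| = 8$ while $n + |\Pi(P)| = 5+5 = 10$, so it is not a forest with duplications. To prove the claim I would assume a connected $P$ with $n\ge 2$ has no reduction move and try to establish the strict inequality $|\Jconn(P)| < n + |\Pi(P)|$, working with the graph on vertex set $\Jconn(P)$ whose edges are the pairs in $\Pi(P)$, for which the number of vertices minus the number of edges equals the number of connected components minus the first Betti number, and arguing that the absence of any reduction move forces this graph to carry a cycle or to have strictly fewer than $n$ connected components. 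Making that dichotomy precise and exhaustive is the delicate point; the transformation bookkeeping in the previous paragraph is a secondary, routine obstacle.
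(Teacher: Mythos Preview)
Your forward implication and inductive scaffold for the converse are sound; in particular, the bookkeeping claim that $|\Jconn(\cdot)|-|\Pi(\cdot)|-|\cdot|$ is additive under hanging and invariant under duplication of a hanger is correct and can be verified for \emph{arbitrary} posets (not just forests with duplications --- so your appeal to Lemma~\ref{lemma:duplicated-forest-structure} is slightly misplaced, but the underlying computation goes through).

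The genuine gap is exactly where you locate it: the existence of a reduction move. Your graph-theoretic sketch (vertices $\Jconn(P)$, edges $\Pi(P)$, Euler characteristic argument) is suggestive but not a proof; you yourself flag the dichotomy as ``the delicate point'' and leave it open. The paper closes this gap by a completely different mechanism. It observes that the map
\[
\pi:\Pi(P)\longrightarrow \BBB(P),\qquad \{J_1,J_2\}\longmapsto J_1\cup J_2,
\]
where $\BBB(P)$ is the set of connected \emph{nonprincipal} order ideals, is always surjective; since $|\BBB(P)|=|\Jconn(P)|-n$, the complete intersection condition is exactly the statement that $\pi$ is a bijection, i.e.\ that every connected order ideal is either principal or \emph{nearly principal} (has a unique expression as a nontrivial union). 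This reformulation immediately yields that the c.i.\ property is inherited by induced subposets, and leads to a characterization by three explicit forbidden induced subposets. With those forbidden subposets in hand, the paper then carries out a case analysis much like your proposed induction: pick any non-minimal $a$, and either every $a'$ incomparable to $a$ has $I(a)\cap I(a')=\varnothing$ (a hanging), or some $a'$ has $I(a)\cap I(a')\neq\varnothing$, in which case the forbidden-subposet conditions force enough incomparabilities to exhibit $P$ as built from smaller pieces by hangings and a duplication.

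So your inductive strategy is compatible with the paper's, but the missing ingredient --- which replaces your undeveloped Betti-number argument --- is the ``nearly principal ideal'' reformulation via the surjection $\pi$. That is the idea you would need to complete the proof.
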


\subsection{The associated graded ring}
We explain in Section~\ref{associated-graded-section} the significance of the
statistic $\nu(f)$ on a $P$-partition $f$ which
appeared in the unique expression \eqref{f-expressed-as-connected-ideals}
above.  It turns out that $\nu(f)$ gives the $\NN$-grading of the image
of the monomial $\xx^f$
in the {\it associated graded ring} $\gr(R_P) =\gr_\mm(R_P)$ 
with respect to the unique $\NN$-homogeneous 
maximal ideal $\mm \subset R_P$.  Consequently, $\gr(R_P)$
has $\NN \times \NN^n$-graded Hilbert series 
\begin{equation}
\label{finest-graded-Hilbert-series}
\Hilb(\gr(R_P),t,\xx) = \sum_{f \in \AAA^\weak(P)} t^{\nu(f)} \xx^f.
\end{equation}
An expression for this Hilbert series as a summation over
the set $\LLL(P)$ of linear extensions of $P$ is given in
\eqref{generating-function-in-t,x} below\footnote{Assuming that
$P$ has been {\it naturally labelled}; 
see Remark~\ref{natural-labelling-remark} below.}.
The following presentation and initial ideal for  $\gr(R_P)$
will be derived in Section~\ref{presentations-section}.

\begin{theorem}
\label{thm:two-other-ideal-generators}
For any poset on $\{1,2,\ldots,n\}$ and any field $k$,
the associated graded ring $\gr(R_P)$ has minimal presentation 
$0 \rightarrow I^\gr_P \longrightarrow  S
 \overset{\gr(\varphi)}{\longrightarrow} \gr(R_P) \rightarrow 0
$
in which the polynomial algebra 
$S=k[U_{J}]_{J \in \Jconn(P)}$ is mapped to $\gr(R_P)$ via
$U_J \overset{\varphi}{\longmapsto} \overline{\xx}^J$, and the kernel
ideal $I^\gr_P$ has minimal generating
indexed by $\{J_1,J_2\}$ in $\Pi(P)$, consisting
of the quadratic binomials and monomials
\begin{equation}
\label{gr-syzygy}
\syz^\gr_{J_1,J_2}:=
\begin{cases} 
U_{J_1} U_{J_2} - U_{J_1 \cup J_2} U_{J_1 \cap J_2}
  & \text{ if } J_1 \cap J_2 \text{ is connected},\\
U_{J_1} U_{J_2} & \text{ if } J_1 \cap J_2 \text{ is disconnected},\\
\end{cases}.
\end{equation}
Furthermore, there exist monomial orders on $S$ for which
the initial ideal of both $I_P$ and $I^\gr_P$ is the squarefree
quadratic monomial ideal $I^\init_P$ having minimal generating set
indexed by $\{J_1,J_2\}$ in $\Pi(P)$, consisting
of the squarefree quadratic monomials
\begin{equation}
\label{monomial-syzygy}
\syz^\init_{J_1,J_2}:=U_{J_1} U_{J_2}.
\end{equation} 
\end{theorem}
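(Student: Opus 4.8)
The plan is to establish the three assertions in sequence: first the presentation of $\gr(R_P)$, then the claim that $I^\init_P$ is a common initial ideal of $I_P$ and $I^\gr_P$, and finally minimality of all the generating sets. The key bookkeeping device throughout is the $\NN$-grading by $\nu(f)$ from \eqref{f-expressed-as-connected-ideals}, which by \eqref{finest-graded-Hilbert-series} is exactly the $\mm$-adic grading on $\gr(R_P)$; this lets me pass between $\mm$-adic degrees in $R_P$ and polynomial degrees in $S$ via $\deg U_J = 1$. I would begin by recording that $\varphi$ is degree-preserving for this grading and that, since each generator $\xx^J$ of $R_P$ lives in $\mm \setminus \mm^2$, we get an induced surjection $\gr(\varphi)\colon S \twoheadrightarrow \gr(R_P)$; the kernel $I^\gr_P$ is the ideal of leading forms (lowest-$\nu$-degree components) of elements of $I_P$.

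First I would derive the shape of $I^\gr_P$ from Theorem~\ref{thm:minimal-presentation}. Take the binomial relation $\syz_{J_1,J_2} = U_{J_1}U_{J_2} - U_{J_1\cup J_2}\cdot U_{J^{(1)}}\cdots U_{J^{(t)}}$, where $J_1\cap J_2 = J^{(1)}\sqcup\cdots\sqcup J^{(t)}$. The first monomial has $\nu$-degree $2$; the second has $\nu$-degree $1+t$. When $J_1\cap J_2$ is connected ($t=1$), both sides have degree $2$, the relation is already homogeneous, and its leading form is $U_{J_1}U_{J_2} - U_{J_1\cup J_2}U_{J_1\cap J_2}$. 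When $J_1\cap J_2$ is disconnected ($t\geq 2$), the two monomials sit in degrees $2$ and $1+t \geq 3$, so the leading form is just $U_{J_1}U_{J_2}$. This reproduces \eqref{gr-syzygy}. To see that these leading forms generate all of $I^\gr_P$ (not merely lie inside it), I would invoke the standard fact that if a generating set of $I$ forms a Gr\"obner basis with respect to a term order refining the $\nu$-grading, then the leading forms of those generators generate $I^\gr$; so this point is folded into the Gr\"obner argument below.

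For the initial-ideal claim I would exhibit a single monomial order on $S=k[U_J]_{J\in\Jconn(P)}$ working simultaneously for $I_P$, $I^\gr_P$ and $I^\init_P$. Order the variables by declaring $U_J > U_{J'}$ when $|J| < |J'|$ (breaking ties arbitrarily), and use the induced degree-lexicographic, or better a weight order giving $U_J$ weight that is large when $|J|$ is small; the point is to force the leading term of each $\syz_{J_1,J_2}$ and each $\syz^\gr_{J_1,J_2}$ to be the \emph{same} squarefree quadratic $U_{J_1}U_{J_2}$. This works because in the binomial $\syz_{J_1,J_2}$ the competing monomial $U_{J_1\cup J_2}U_{J^{(1)}}\cdots U_{J^{(t)}}$ involves the strictly larger ideal $J_1\cup J_2$ together with the strictly smaller pieces $J^{(i)}\subsetneq J_1, J_2$; a weight function linear in $1/|J|$ (or any strictly decreasing function of $|J|$) can be chosen so that $U_{J_1}U_{J_2}$ wins. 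I would then run Buchberger's criterion on the set $\{\syz_{J_1,J_2}\}_{\{J_1,J_2\}\in\Pi(P)}$: the $S$-polynomial of two relations is nonzero only when their leading monomials share a variable, i.e. when the two nontrivially-intersecting pairs share a common connected ideal, and I expect each such $S$-polynomial to reduce to zero using the other relations, with the reduction steps mirroring the combinatorics of how connected components of intersections of order ideals refine one another. This is the step I expect to be the main obstacle --- verifying that the syzygy relations close up under $S$-polynomials genuinely requires understanding the lattice-theoretic interaction of the ideals $J_1\cup J_2$, $J_1\cap J_2$ and their further intersections with a third ideal $J_3$, and it is where a careful case analysis (connected vs.\ disconnected intersections, overlapping vs.\ nested vs.\ disjoint) cannot be avoided. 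Once the Gr\"obner basis property is in hand, $\init(I_P)=\init(I^\gr_P)=I^\init_P$ follows, and as noted above the leading forms of the $\syz_{J_1,J_2}$ generate $I^\gr_P$, pinning down \eqref{gr-syzygy}.

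Finally, minimality of the generating set indexed by $\Pi(P)$ for each of $I_P$, $I^\gr_P$, $I^\init_P$: for the monomial ideal $I^\init_P$ this is immediate, since no squarefree quadratic monomial $U_{J_1}U_{J_2}$ with $\{J_1,J_2\}$ intersecting nontrivially divides another (the four variables involved in two distinct such pairs are not all equal, and a quadratic monomial cannot properly divide a quadratic monomial). Minimality then transfers upward: a generating set whose initial terms minimally generate the initial ideal is itself minimal, and since $\init(I_P)=\init(I^\gr_P)=I^\init_P$ with the stated generators mapping bijectively to the minimal monomial generators, both $\{\syz_{J_1,J_2}\}$ and $\{\syz^\gr_{J_1,J_2}\}$ are minimal. (Alternatively, one can compute $\dim_k (I/\mm_S I)$ in the relevant degree directly and match it to $|\Pi(P)|$.) Assembling these pieces gives the theorem.
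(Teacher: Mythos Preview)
Your overall strategy is reasonable, but the step you flag as ``the main obstacle'' --- directly verifying Buchberger's criterion by reducing $S$-polynomials through a case analysis on how a third ideal $J_3$ interacts with $J_1,J_2$ --- is exactly the work the paper manages to avoid, and you have not actually carried it out. As written this is a gap: you say you \emph{expect} each $S$-polynomial to reduce to zero, but you give no argument, and the combinatorics of nested/overlapping connected ideals is intricate enough that this is not a triviality.

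The paper's approach replaces the Buchberger computation with a Hilbert series comparison. One first observes that the proposed generators $\syz_{J_1,J_2}$ and $\syz^\gr_{J_1,J_2}$ do lie in the respective kernels $K=\ker\varphi$ and $K^\gr=\ker\gr(\varphi)$, and that under the monomial order their leading terms are the $U_{J_1}U_{J_2}$. Hence $I^\init_P \subseteq \init_\preceq(I_P)\subseteq \init_\preceq(K)$ and similarly for the graded versions. The crucial point is then purely combinatorial: the monomials surviving in $S/I^\init_P$ are exactly the multisets of connected order ideals that pairwise intersect trivially, and by the uniqueness statement in Proposition~\ref{P-partition-expressions-prop}(ii) these biject with weak $P$-partitions. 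Thus $S/I^\init_P$, $R_P\cong S/K$, and $\gr(R_P)\cong S/K^\gr$ all share the same $\NN^n$-graded Hilbert series, forcing every inclusion in the chain to be an equality. This simultaneously proves $I_P=K$, $I^\gr_P=K^\gr$, and that the given generators form Gr\"obner bases, with no $S$-polynomial computations at all.

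A few smaller remarks. Your derivation of the shape of $\syz^\gr_{J_1,J_2}$ by taking leading forms of $\syz_{J_1,J_2}$ presupposes Theorem~\ref{thm:minimal-presentation}; in the paper both presentations are established together by the same Hilbert series trick. And your minimality argument (``a generating set whose initial terms minimally generate the initial ideal is itself minimal'') is not correct in general --- knowing that $\{\init(g_i)\}$ minimally generates $\init(I)$ does not automatically prevent one $g_j$ from being an $S$-linear combination of the others. The paper instead argues minimality directly (Proposition~\ref{toric-minimal-generators-prop}): if $\syz_{J_1,J_2}$ were redundant, the quadratic monomial $U_{J_1}U_{J_2}$ would have to be divisible by a monomial from some other $\syz_{K_1,K_2}$, and one checks by hand that this forces $\{K_1,K_2\}=\{J_1,J_2\}$.
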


\vskip.1in
\noindent
{\bf Example.}
For the poset in Figure~\ref{duplicated-forest-figure}, 
the presentation in Theorem~\ref{thm:two-other-ideal-generators} is 
$\gr(R_P)=S/I^\gr_P$, where
$S$ is as in \eqref{example-polynomial-algebra} and 
$I^\gr_P$ is generated by the binomial 
$$
\begin{aligned}
&U_{15} U_{126} - U_{1256} U_{1}, \\
&U_{1234567} U_{1234568}
\end{aligned}
$$
while the initial ideal $I^\init_P$ in 
Theorem~\ref{thm:two-other-ideal-generators}
is generated by the monomials
$$
\begin{aligned}
&U_{15} U_{126}, \\
&U_{1234567} U_{1234568}.
\end{aligned}
$$

The existence of this quadratic initial ideal $I^\init_p$
has this consequence.
\begin{corollary}
\label{Koszul-Hilb-corollary}
For any poset $P$ on $\{1,2,\dots,n\}$, the associated
graded ring $\gr(R_P)$ a Koszul algebra.  Thus the 
$\NN \times \NN^n$-graded Hilbert series
from \eqref{finest-graded-Hilbert-series} will have the property that 
$$
\left[ \sum_{f \in \AAA^\weak(P)} t^{\nu(f)} \xx^f \right]^{-1}_{t \mapsto -t}
$$
is a power series in $t,x_1,\ldots,x_n$ with nonnegative coefficients;
specifically, it is the $\NN \times \NN^n$-graded Hilbert series 
of the Koszul dual algebra $\gr(R_P)^!$.
\end{corollary}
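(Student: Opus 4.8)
The plan is to deduce everything in the corollary from the final assertion of Theorem~\ref{thm:two-other-ideal-generators}, namely that $\gr(R_P)$ has a presentation $S/I^\gr_P$ whose defining ideal $I^\gr_P$ admits, under a suitable monomial order on $S$, a \emph{squarefree quadratic} initial ideal $I^\init_P$. First I would invoke the standard fact (see Fr\"oberg, or Backelin--Fr\"oberg, or Polishchuk--Positselski) that a graded algebra defined by an ideal possessing a quadratic Gr\"obner basis — equivalently, whose initial ideal is generated in degree two — is a \emph{Koszul algebra}. Since $I^\init_P$ is generated by the degree-two monomials $\syz^\init_{J_1,J_2}=U_{J_1}U_{J_2}$, the Gr\"obner basis of $I^\gr_P$ producing this initial ideal consists of the $\syz^\gr_{J_1,J_2}$, which are quadratic; hence $\gr(R_P)$ is Koszul. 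This step is essentially a citation, so the only care needed is to make sure the grading on $S$ (by $\deg U_J = 1$) is the one with respect to which "Koszul" is being asserted — and indeed this is exactly the $\NN$-grading recording $\nu(f)$, per the discussion preceding \eqref{finest-graded-Hilbert-series}.

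Next I would record the numerical consequence of Koszulness. For any Koszul algebra $A$ with $\deg$-grading one has the Hilbert-series identity $\Hilb(A,t)\cdot \Hilb(A^!,-t)=1$, where $A^!$ is the Koszul dual. I would apply this with the finer $\NN\times\NN^n$ grading: $\gr(R_P)$ carries the extra $\NN^n$-grading by $\xx$-degree (inherited from $S=k[U_J]$, where $U_J$ sits in multidegree $\chi_J\in\NN^n$), and since every relation $\syz^\gr_{J_1,J_2}$ is $\NN\times\NN^n$-homogeneous, all of Koszul duality respects this bigrading. Therefore the bigraded Hilbert series of $\gr(R_P)^!$ equals $\left[\Hilb(\gr(R_P),t,\xx)\right]^{-1}_{t\mapsto -t}$. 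Combining with the identity \eqref{finest-graded-Hilbert-series}, $\Hilb(\gr(R_P),t,\xx)=\sum_{f\in\AAA^\weak(P)}t^{\nu(f)}\xx^f$, gives precisely the displayed formula in the corollary, and the claimed nonnegativity of its coefficients follows because $\gr(R_P)^!$, being an honest $\NN\times\NN^n$-graded $k$-algebra, has Hilbert series with coefficients in $\NN$.

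The main point requiring a little attention — rather than a genuine obstacle — is the interaction of Koszul duality with the auxiliary $\NN^n$-grading. The classical Koszul-duality results are usually phrased for algebras graded by a single copy of $\NN$ with finite-dimensional graded pieces; here one must observe that $\gr(R_P)$ is generated in $t$-degree one, has finite-dimensional pieces in each total degree, and is multigraded compatibly, so that the quadratic-dual construction $\gr(R_P)^! = T(V^*)/(W^\perp)$ (with $V$ the span of the $U_J$ and $W\subset V\otimes V$ the span of the quadratic relations) inherits the $\NN^n$-grading from $V^*$, which carries the opposite $\NN^n$-multidegrees. The Hilbert-series reciprocity $\Hilb(\gr(R_P),t,\xx)\cdot\Hilb(\gr(R_P)^!,-t,\xx)=1$ then follows termwise from the exactness of the Koszul bicomplex in each fixed multidegree, exactly as in the singly-graded case. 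I would either spell this out in a sentence or two or simply cite a multigraded version of the statement; no new idea is needed beyond homogeneity of the relations.
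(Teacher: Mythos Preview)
Your proposal is correct and follows essentially the same route as the paper: invoke Theorem~\ref{thm:two-other-ideal-generators} to obtain a quadratic initial ideal for $I^\gr_P$, cite the standard fact that this implies Koszulity (the paper cites \cite[Prop.~3]{EisenbudReevesTotaro}), and then appeal to the standard Hilbert-series reciprocity between a Koszul algebra and its dual. You are more explicit than the paper about the compatibility of Koszul duality with the auxiliary $\NN^n$-grading, which the paper simply leaves as ``standard''; one small slip is your remark that $V^*$ carries the \emph{opposite} $\NN^n$-multidegrees---for the identity $\Hilb(A,t,\xx)\cdot\Hilb(A^!,-t,\xx)=1$ to hold as stated, the dual generators should be assigned the \emph{same} $\xx$-multidegrees as the $U_J$.
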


The remainder of the paper explains these results further.
The reader interested only in the combinatorial results 
will find them in Sections \ref{unique-expressions-section} through
\ref{first-forest-proof-section}, and can safely skip the connections to
ring-theory explained in Sections~\ref{associated-graded-section} through 
\ref{CI-characterization-section}.
Section~\ref{triangulation-section} discusses the geometry of
the initial ideal $I^\init_P$ and its associated triangulation of
the $P$-partition cone, relating it to graphic zonotopes and
graph-associahedra. Section~\ref{questions-section}
collects some further questions.

%%%%%%%%%%%%%%%%%%%%%%%%%%%%%%%%%%%%%%%%%%%%%%%%%%%%%%%%%%
\section{Unique expressions}
\label{unique-expressions-section}
%%%%%%%%%%%%%%%%%%%%%%%%%%%%%%%%%%%%%%%%%%%%%%%%%%%%%%%%%%

We discuss some old and new ways to uniquely express a $P$-partition,
mentioned in the Introduction.  

\begin{definition}
Let $P$ be a  partial order $<_P$ on $\{1,2,\ldots,n\}$,
and consider the nonnegative integers $\NN=\{0,1,2,\ldots\}$ as a totally
ordered set with its usual order $<_\NN$.
Say that a map $f: P \rightarrow \NN:=\{0,1,2,\ldots\}$ is
\begin{enumerate}
\item[$\bullet$] 
a {\it weak $P$-partition} if it is weakly order-reversing:
$i <_P j$ implies the inequality $f(i) \geq_\NN f(j)$;
\item[$\bullet$] 
a {\it $P$-partition} if, in addition,
whenever $i <_P j$ and $i >_\NN j$, one has a strict inequality $f(i) >_\NN f(j)$;
\item[$\bullet$] 
a {\it strict $P$-partition} if 
$i <_P j$ implies $f(i) >_\NN f(j)$.
\end{enumerate}
{\sf NB:} This terminology is similar in spirit, but not quite 
the same as those used by Stanley in \cite[\S 4.5, \S 7.19]{Stanley-EC}.  
We hope that the slight differences create no confusion.
\end{definition}
\noindent
Denoting by $\AAA(P), \AAA^{\weak}(P),\AAA^{\strict}(P)$ the sets
of $P$-partitions, weak $P$-partitions, and strict $P$-partitions,
one has the inclusions
\begin{equation}
\label{P-partition-inclusions}
\AAA^{\strict}(P) \subseteq \AAA(P) \subseteq \AAA^{\weak}(P).
\end{equation}
One has equality in the second inclusion of \eqref{P-partition-inclusions}
if and only if $P$ is {\it naturally labelled};
similarly one has equality in the first inclusion
of \eqref{P-partition-inclusions} if and only if 
$P$ is {\it strictly} or {\it anti-naturally labelled} 
in the sense that $i <_P j$ implies $i >_\NN j$.

\begin{example}
\label{ex:three-little-posets}
The three posets $P_1, P_2, P_3$ on $\{1,2,3\}$
shown below 

\epsfxsize=60mm
\centerline{\epsfbox{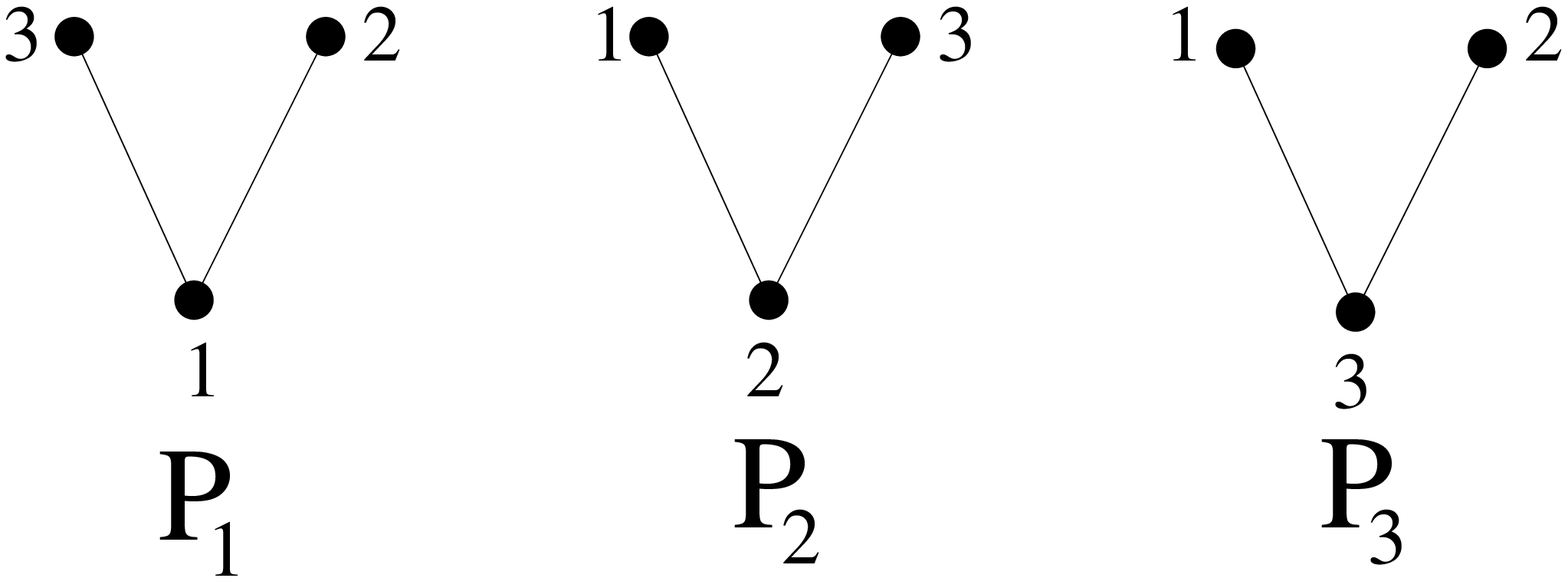}}
\noindent
 are all isomorphic, with $P_1$ naturally labelled,
$P_3$ strictly labelled, and $P_2$ neither naturally
nor strictly labelled.  One has
$$
\begin{aligned}
\AAA(P_1)&=\{ f=(f(1),f(2),f(3)) \in \NN^3: f(1) \geq_\NN f(2),f(3) \} \\
\AAA(P_2)&=\{ f=(f(1),f(2),f(3)) \in \NN^3: f(2) \geq_\NN f(3) \text{ and } f(2) >_\NN f(1) \} \\
\AAA(P_3)&=\{ f=(f(1),f(2),f(3)) \in \NN^3: f(3) >_\NN f(1), f(2) \}.
\end{aligned}
$$
\end{example}

\begin{definition}
Recall that a permutation $w=(w(1),\ldots,w(n))$ of $\{1,2,\ldots,n\}$ 
is a {\it linear extension} of $P$ if the order $w(1) <_w \cdots <_w w(n)$
extends $P$ to a linear order.  Denote by $\LLL(P)$ the set of
all linear extensions $w$ of $P$. 
Denote by $w|_{[1,i]}$ the
initial segment $\{w(1),w(2),\ldots,w(i)\}$ of $w$ thought of as a subset
of $\{1,2,\ldots,n\}$.
It is an order ideal of $P$ whenever $w$ lies in $\LLL(P)$.

For any subset $A \subset \{1,2,\ldots,n\}$, let
$\chi_A$ be its characteristic function thought of as a vector
in $\{0,1\}^n$.
\end{definition}

\begin{proposition}
\label{fundamental-P-partition-proposition}
For any poset $P$ on $\{1,2,\ldots,n\}$, and
any $P$-partition $f$, there
exists a unique permutation $w$ in $\LLL(P)$ for which
\begin{equation}
\label{weak-part-of-Pw-partition}
f(w(1)) \geq \cdots \geq f(w(n))
\end{equation}
and one has strict inequality $f(w(i)) > f(w(i+1))$ when $w(i) > w(i+1)$,
that is, whenever $i$ is an element of the descent set $\Des(w)$.
Consequently, 
$$
f = \sum_{i=1}^n (f(w(i))-f(w(i+1))) \cdot \chi_{w|_{[1,i]}}.
$$
\end{proposition}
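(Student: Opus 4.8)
The plan is to extract the permutation $w$ directly from the $P$-partition $f$ by using $f$ to break ties in the underlying order, and then check the three required properties: that $w$ is a linear extension, that it satisfies the weak inequalities \eqref{weak-part-of-Pw-partition} with the prescribed strictness at descents, and that it is the unique permutation doing so. First I would define a total order $<_w$ on $\{1,2,\ldots,n\}$ by declaring $a <_w b$ whenever either $f(a) > f(b)$, or $f(a) = f(b)$ and $a <_\ZZ b$. This is a genuine total order (lexicographic on the pair $(-f(a),a)$), so it determines a permutation $w$ via $w(1) <_w \cdots <_w w(n)$. By construction $f(w(1)) \geq \cdots \geq f(w(n))$, giving \eqref{weak-part-of-Pw-partition} for free.

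Next I would verify $w \in \LLL(P)$: if $a <_P b$, then since $f$ is a $P$-partition we have $f(a) \geq f(b)$, and moreover if $f(a) = f(b)$ then the definition of a $P$-partition forbids $a >_\ZZ b$, so $a <_\ZZ b$; in either case $a <_w b$, so $<_w$ refines $P$. Then I would check the strictness claim at descents: if $i \in \Des(w)$, i.e.\ $w(i) >_\ZZ w(i+1)$, then since $w(i) <_w w(i+1)$ the only way the tie-break could have gone this direction is via $f(w(i)) > f(w(i+1))$ (if the $f$-values were equal, the $\ZZ$-order would force $w(i) <_\ZZ w(i+1)$, contradicting $i \in \Des(w)$), which is exactly the required strict inequality.

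For uniqueness, suppose $w'$ is another permutation satisfying all the stated conditions. The sequence $f(w'(1)) \geq \cdots \geq f(w'(n))$ together with the strictness at descents of $w'$ forces $w'$ to list the elements in non-increasing order of $f$, and within each fiber $f^{-1}(c)$ to list them in $\ZZ$-increasing order (any inversion $w'(i) >_\ZZ w'(i+1)$ within a fiber would be a descent requiring strict inequality, impossible); but that description determines $w'$ completely, so $w' = w$. Finally, the displayed identity $f = \sum_{i=1}^n (f(w(i)) - f(w(i+1))) \chi_{w|_{[1,i]}}$ (with the convention $f(w(n+1)) := 0$) is a routine telescoping check: evaluating the right-hand side at $w(k)$ gives $\sum_{i \geq k} (f(w(i)) - f(w(i+1))) = f(w(k))$, and the coefficients $f(w(i)) - f(w(i+1))$ are nonnegative by \eqref{weak-part-of-Pw-partition}.

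The only mildly delicate point is keeping straight the interaction between the three orders ($<_P$, $<_\ZZ$, and the value order of $f$) in the definition of the tie-break, and making sure the $P$-partition condition — rather than just the weak condition — is exactly what is needed to guarantee both $w \in \LLL(P)$ and the descent-strictness compatibility. Everything else is bookkeeping.
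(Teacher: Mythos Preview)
Your proof is correct and is essentially the same approach as the paper's: the paper simply cites Stanley and says to take $w$ as the lexicographically earliest permutation satisfying \eqref{weak-part-of-Pw-partition}, which is exactly the permutation produced by your tie-breaking total order $<_w$. You have supplied the verification details that the paper omits.
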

\begin{proof}
(See \cite[Lemma 4.5.1, Theorem 7.19.4]{Stanley-EC})
One takes $w$ to 
be the minimum length or lexicographically earliest
permutation satisfying \eqref{weak-part-of-Pw-partition}.
\end{proof}

\begin{proposition}
\label{P-partition-expressions-prop}
For any poset $P$ on $\{1,2,\ldots,n\}$, any weak $P$-partition $f$
(and hence also any $P$-partition) has a unique expression as
\begin{enumerate}
\item[(i)] $f = \sum_{i=k}^{\max(f)} \chi_{I_k}$
for a multiset $I_1 \supseteq \cdots \supseteq I_{\max(f)}$ of 
nested nonempty order ideals in $P$, and also as
\item[(ii)] $f = \sum_{i=1}^{\nu(f)} \chi_{J_i}$
for a multiset $J_1,J_2,\ldots,J_{\nu(f)}$ of nonempty connected order 
ideals of $P$ which pairwise intersect trivially.
\end{enumerate}
\end{proposition}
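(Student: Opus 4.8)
The plan is to prove both parts by induction on $\max(f)$, peeling off the ``top layer'' of the $P$-partition each time, and to observe that part (ii) follows from part (i) by decomposing each order ideal into connected components.

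\textbf{Part (i).} Given a weak $P$-partition $f$ with $\max(f)=m$, set $I_k := \{j \in P : f(j) \ge k\}$ for $k = 1,\dots,m$. First I would check these are order ideals of $P$: if $j \in I_k$ and $i <_P j$, then $f(i) \ge f(j) \ge k$ since $f$ is weakly order-reversing, so $i \in I_k$; and they are nested, $I_1 \supseteq I_2 \supseteq \cdots \supseteq I_m$, directly from the definition, with each $I_k$ nonempty because $f$ attains every value between $0$ and $m$ is \emph{not} needed---rather $I_k \supseteq I_m \ne \varnothing$ since $I_m = f^{-1}(m)$ and $m = \max(f) \ge 1$ forces $I_m$ nonempty (if $m=0$ then $f \equiv 0$ and the sum is empty). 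The pointwise identity $f = \sum_{k=1}^m \chi_{I_k}$ is immediate since for each $j$, the number of $k$ with $f(j) \ge k$ is exactly $f(j)$. For uniqueness, suppose $f = \sum_{k=1}^{m'} \chi_{I'_k}$ with $I'_1 \supseteq \cdots \supseteq I'_{m'}$ nonempty order ideals; evaluating at any $j$ shows $f(j) = \#\{k : j \in I'_k\}$, and by the nesting this count equals the largest index $k$ with $j \in I'_k$ (or $0$), so $j \in I'_k \iff f(j) \ge k$, forcing $I'_k = I_k$ and $m' = \max(f) = m$.

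\textbf{Part (ii).} Here I would take the $I_k$ from part (i) and replace each by its multiset of connected-component order ideals: write the Hasse diagram of $I_k$ as a disjoint union of connected pieces $J_k^{(1)}, \dots, J_k^{(r_k)}$, each of which is again an order ideal of $P$ (a connected component of an order ideal is an order ideal), and each of which is connected by construction. Then $\chi_{I_k} = \sum_\ell \chi_{J_k^{(\ell)}}$, so collecting all these over all $k$ gives the desired expression $f = \sum_i \chi_{J_i}$. That the $J_i$ pairwise intersect trivially is the crux: two components coming from the \emph{same} level $I_k$ are disjoint; two components $J$ from level $I_k$ and $J'$ from level $I_{k'}$ with $k < k'$ satisfy $I_{k'} \subseteq I_k$, and I claim $J'$ is either contained in $J$ or disjoint from $J$. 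Indeed $J' \subseteq I_{k'} \subseteq I_k = \bigsqcup_\ell J_k^{(\ell)}$, and since $J'$ is connected it lies entirely within one component $J_k^{(\ell)}$; if that component is $J$ we get $J' \subseteq J$, otherwise $J' \cap J = \varnothing$. So the pairwise-trivial-intersection property holds.

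For uniqueness in (ii), suppose $f = \sum_{i=1}^{N} \chi_{J_i}$ with the $J_i$ nonempty connected order ideals pairwise intersecting trivially. The main obstacle---and the step deserving the most care---is recovering the levels $I_k$ from this unordered multiset. The idea is that the pairwise-trivial-intersection condition makes the $J_i$ into a ``laminar-like'' family: for each fixed level $k$, I would show that $I_k := \{j : f(j) \ge k\}$ equals the union of those $J_i$ that are \emph{maximal} among all $J_{i'}$ containing some element of $I_k$, and that this union is in fact a disjoint union of exactly the $J_i$'s appearing in it, reconstructing the component decomposition of $I_k$. Concretely: since the $J_i$ pairwise nest or are disjoint, for any point $j$ the set $\{J_i : j \in J_i\}$ is totally ordered by inclusion, hence $f(j) = \#\{i : j \in J_i\}$ and the $J_i$ containing $j$ form a chain $J_{i_1} \subsetneq \cdots \subsetneq J_{i_{f(j)}}$; then $j \in I_k \iff f(j) \ge k \iff j$ lies in the top $k$ of these, and one checks the maximal elements of the chain-tops over all $j \in I_k$ are precisely the components of $I_k$. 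Matching these against the components produced from part (i) forces the two multisets to coincide. I expect writing this reconstruction cleanly---rather than the existence half---to be where the real work lies, though it is ultimately a routine consequence of the laminar structure.
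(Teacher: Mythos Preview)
Your treatment of part (i) and the existence half of (ii) matches the paper's proof essentially line for line; in fact you supply more detail on why the resulting connected ideals pairwise intersect trivially than the paper does (it merely says ``it is not hard to see'').

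The one genuine difference is in the uniqueness argument for (ii). The paper does not attempt to reconstruct the levels $I_k$ directly from the laminar family. Instead it inducts on $|f|=\sum_i f(i)$: if the support $J$ of $f$ is disconnected, split $f$ along its components and apply induction to each piece; if $J$ is connected, observe that in any valid expression the inclusion-maximal $J_i$'s are pairwise disjoint with union $J$, hence there is exactly one maximal $J_i$ and it equals $J$, so one peels off $\chi_J$ and recurses on $\hat f = f-\chi_J$. Your approach---identifying the $J_i$'s level by level via the chain of ideals through each point---is really the same peeling argument unrolled, and it is correct in principle, but two things make it messier than the paper's version. First, your chain $J_{i_1}\subsetneq\cdots\subsetneq J_{i_{f(j)}}$ must allow equalities, since we are dealing with a multiset and the same connected ideal may occur several times. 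Second, turning ``the top $k$ ideals through each $j$'' into a globally well-defined collection (independent of the basepoint $j$) requires exactly the observation the paper isolates: that the inclusion-maximal ideals in the family are the connected components of the support. Once you have that observation you may as well induct directly, which is what the paper does. So your route works, but the paper's induction on $|f|$ packages the same idea more cleanly and avoids the bookkeeping you flagged as ``where the real work lies.''
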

\begin{proof}
For (i), one sets $I_k:=f^{-1}(\{k,k+1,k+2,\ldots\})$ for
$k=1,2,\ldots,t:=\max(f)$.  

To prove (ii), one can show existence of such an expression for $f$
by starting with the multichain $J_1 \supseteq \cdots \supseteq J_t$
of order ideals from (i), and replacing each order ideal $J_i$ with
its collection of connected components.  It is not hard to see
that the resulting multiset of connected order ideals will pairwise
intersect trivially.  

To prove uniqueness of the expression in (ii),
induct on $|f|:=\sum_{i=1}^n f(i)$, with trivial base case $f=0$.
In the inductive step, let $f \neq 0$, and consider the set $J$ which is
the support of $f$ as a subset of $P$.  Because $f$ is a $P$-partition, 
$J$ is a nonempty order ideal. Decompose $J$ into its connected components 
$J^{(1)},J^{(2)},\ldots,J^{(c)}$, which are all connected order ideals.  

If $c > 1$, then one can consider for each $i$ the restriction 
$f|_{J^{(i)}}$ as a $J^{(i)}$-partition.  Since $|J^{(i)}|<|J| \leq |P|$,
uniqueness follows by induction.  

If $c=1$, so that $J$ is connected (and nonempty), 
then $f=\chi_J + \hat{f}$, where $\hat{f}$ is again a $P$-partition,
and $|\hat{f}| < |f|$.  Again, uniqueness follows by induction.
\end{proof}

\begin{remark}
\label{natural-labelling-remark}
The relation between Propositions~\ref{fundamental-P-partition-proposition}
and \ref{P-partition-expressions-prop}
is easiest when $P$ is naturally labelled, so that 
a $P$-partition $f$ is the same as a weak $P$-partition.
In that case, the unique permutation $w$ guaranteed by 
Proposition~\ref{fundamental-P-partition-proposition}
has the property that the multiset of ideals $\{ I_k \}_{k=1,2,\ldots,\max(f)}$
contains the order ideal $w|_{[1,i]}$ of $P$
with multiplicity $f(w(i))-f(w(i+1))$.

We also note that it is essentially innocuous to relabel an arbitrary
poset $P$ so that it is naturally labelled, either if the goal is 
to count the linear extensions $\LLL(P)$, or if the goal is
to understand the ring $R_P$-- this ring depends
only on $P$ up to isomorphism, not on the labelling.  The labelling
of $P$ only makes a difference when considering the ideal $\III_P$ within $R_P$
consider later, in Section~\ref{non-natural-labelings-section}.
\end{remark}

\begin{example}
Let $P$ be the naturally labelled poset on $\{1,2,3,4,5,6,7,8,9\}$
from Figure~\ref{duplicated-forest-figure}
and let $f$ be the $P$-partition with values in the following table, as depicted below:
$$
\begin{tabular}{|c||c|c|c|c|c|c|c|c|c|}\hline
$i$    & $1$ & $2$ & $3$ & $4$ & $5$ & $6$ & $7$ & $8$ \\\hline
$f(i)$ & $5$ & $4$ & $2$ & $1$ & $2$ & $4$ & $0$ & $1$ \\\hline
\end{tabular}
$$
$$
\epsfxsize=70mm
\epsfbox{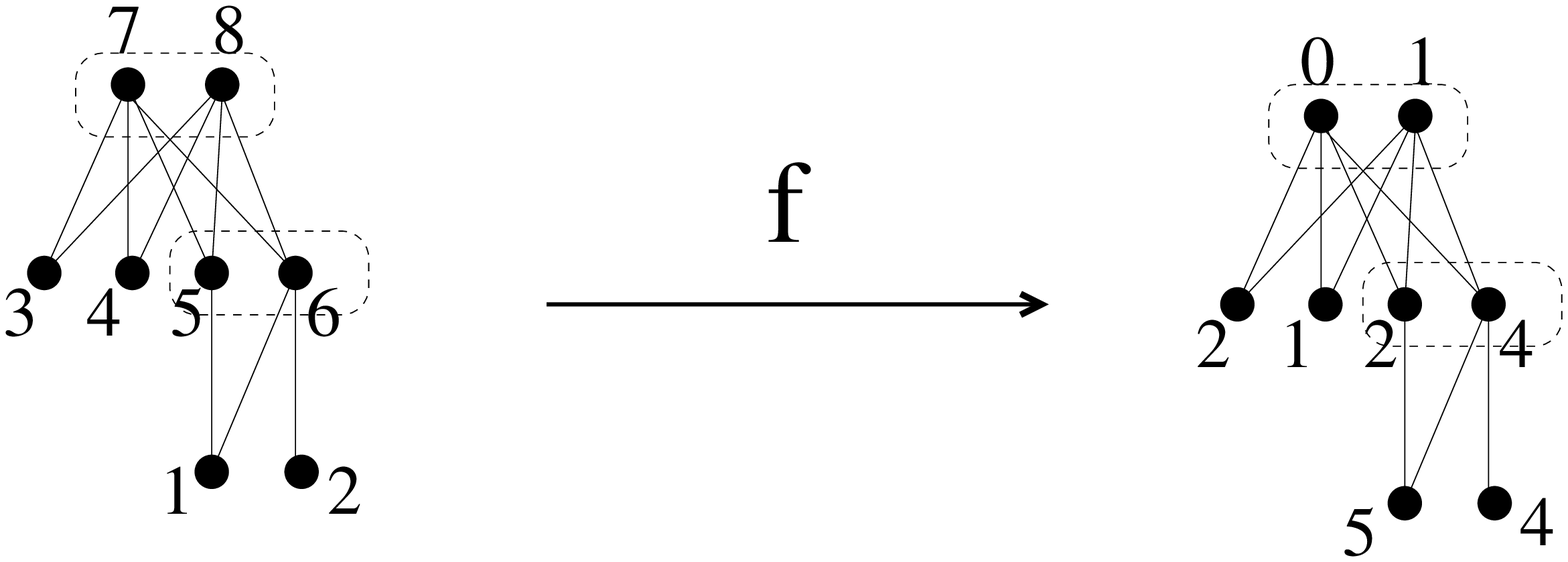}
$$
Then $\max(f)=5$ and the unique expression for $f$ as in 
Proposition~\ref{P-partition-expressions-prop} part (i)
is $f=\sum_{j=1}^5\chi_{I_j}$ where $\{I_1,I_2,I_3,I_4,I_5\}$
are the nested ideals shown here
$$
\begin{array}{cccccccccc}
I_1   &\supseteq &I_2 &\supseteq &I_3 &=& I_4& \supseteq& I_5 \\
\Vert   &        &\Vert &        &\Vert &\      & \Vert&        & \Vert \\
\{1,2,3,4,5,6,8\} & &\{1,2,3,5,6\}& &\{1,2,6\}& &\{1,2,6\}& &\{1\}\\
\Vert   &        &\Vert &        &\Vert &\      & \Vert&        & \Vert \\
J_1 &   &\{1,2,5,6\} \sqcup \{3\}&   &J_3 &   & J_4&    & J_5 \\
    &          &\Vert&         &    &        &    &        &  \\
    &          &J_2 \sqcup J_6&         &    &        &    &        &  \\
\end{array}
$$
and these decompose into the multiset of $\nu(f)=6$ 
connected component ideals 
$
\{J_1,J_2,J_3,J_4,J_5,J_6\}
$
labelled above, giving the expression $f=\sum_{i=1}^6\chi_{J_i}$ as in
Proposition~\ref{P-partition-expressions-prop} part (ii).
The unique expression as in 
Proposition~\ref{fundamental-P-partition-proposition}
has
$$
\begin{array}{rlccccccl}
w&=(w(1),&w(2),&w(3),&w(4),&w(5),&w(6),&w(7),&w(8)) \\
&=(1,&2,&6,&3,&5,&4,&8,&7)
\end{array}
$$
and
$
f=
 1 \cdot \chi_{w|_{[1,1]}}
+2 \cdot \chi_{w|_{[1,3]}}
+1 \cdot \chi_{w|_{[1,5]}}
+1 \cdot \chi_{w|_{[1,7]}}.
$

\end{example}

%%%%%%%%%%%%%%%%%%%%%%%%%%%%%%%%%%%%%%%%%%%%%%%%%%%%%%%%%%
\section{Generating functions}
\label{generating-functions-section}
%%%%%%%%%%%%%%%%%%%%%%%%%%%%%%%%%%%%%%%%%%%%%%%%%%%%%%%%%%

We explain here how Proposition~\ref{P-partition-expressions-prop} 
suggests generating functions counting $P$-partitions
and linear extensions according to certain statistics, which
one can then specialize in various ways.  
We will see in Corollary~\ref{associated-graded-Hilbert-series-corollary} 
that they all have natural interpretations as 
Hilbert series for the $P$-partition
ring $R_P$ or its associated graded ring $\gr(R_P)$
using different specializations of their multigradings.

\begin{definition}
Given a $P$-partition $f$, recall that $\nu(f)$ denotes
the size (counting multiplicity) of the multiset 
$\{J_1,\ldots,J_{\nu(f)}\}$ in the unique expression
\eqref{f-expressed-as-connected-ideals} whose existence is guaranteed by 
Proposition~\ref{P-partition-expressions-prop}(ii).  

Given an order ideal $J$ of $P$, let $c_P(J)$ denote
the number of connected components in the Hasse diagram of the
restriction $P|_J$.  We also define a 
new descent statistic for $w$ that {\it depends upon the poset 
structure of $P$}:
$$
\des_P(w) := \sum_{ i \in \Des(w) } c_P(w|_{[1,i]}) 
$$ 
\noindent
Recall also that we have been using the notations
$
\xx^f:=x_1^{f(1)} \cdots x_n^{f(n)}
$ 
for $f \in \NN^n$, and
$
\xx^A:=\prod_{i \in A} x_i
$ 
for subsets $A \subseteq \{1,2,\ldots,n\}.$.
\end{definition}

\begin{corollary}
\label{generating-functions-corollary}
For any poset $P$ on $\{1,2,\ldots,n\}$, one has
\begin{equation}
\label{generating-function-in-t,x}
\sum_{f \in \AAA(P)} t^{\nu(f)} \xx^f =
\sum_{w \in \LLL(P)} \frac{ t^{\des_P(w)} 
                        \prod_{i \in \Des(w)}\xx^{w|_{[1,i]}} }
                  { \prod_{i=1}^n (1 - t^{c_P(w|_{[1,i]})} \xx^{w|_{[1,i]}} ) }.
\end{equation}

Setting $t=1$ in \eqref{generating-function-in-t,x} gives
\begin{equation}
\label{generating-function-in-x}
\sum_{f \in \AAA(P)} \xx^f =
\sum_{w \in \LLL(P)} \frac{ \prod_{i \in \Des(w)}\xx^{w|_{[1,i]}} }
                  { \prod_{i=1}^n (1 - \xx^{w|_{[1,i]}} ) },
\end{equation}
whereas setting $x_i=q$ for all $i$ in \eqref{generating-function-in-t,x} gives
\begin{equation}
\label{generating-function-in-t,q}
\sum_{f \in \AAA(P)} t^{\nu(f)} q^{|f|} =
\sum_{w \in \LLL(P)} \frac{ t^{\des_P(w)} q^{\maj(w)} }
                  { \prod_{i=1}^n (1 - t^{c_P(w|_{[1,i]})} q^i ) }.
\end{equation}
Further specializing $q=1$ in \eqref{generating-function-in-t,q} gives
\begin{equation}
\label{generating-function-in-t}
\sum_{f \in \AAA(P)} t^{\nu(f)} =
\sum_{w \in \LLL(P)} \frac{ t^{\des_P(w)} } { \prod_{i=1}^n (1 - t^{c_P(w|_{[1,i]})} ) }.
\end{equation}
Setting both $t=1$ and $x_i=q$ for all $i$ in \eqref{generating-function-in-t,x}
gives
\begin{equation}
\label{generating-function-in-q}
(1-q)(1-q^2)\cdots(1-q^n) 
  \sum_{f \in \AAA(P)} q^{|f|} = \sum_{w \in \LLL(P)} q^{\maj(w)},
\end{equation}
and hence, lastly,
\begin{equation}
\label{linear-extension-count-as-limit}
\lim_{q \rightarrow 1} (1-q)(1-q^2)\cdots(1-q^n) 
   \sum_{f \in \AAA(P)} q^{|f|} = |\LLL(P)|.
\end{equation}
\end{corollary}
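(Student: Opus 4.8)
The plan is to derive Corollary~\ref{generating-functions-corollary} as a direct consequence of Proposition~\ref{fundamental-P-partition-proposition}, which sets up the bijection between $P$-partitions and pairs $(w, \text{integer data})$, together with a bookkeeping of the statistics $\nu(f)$ and $\xx^f$ under that bijection. First I would fix $w \in \LLL(P)$ and identify which $P$-partitions $f$ are assigned to $w$ by Proposition~\ref{fundamental-P-partition-proposition}: these are exactly the $f$ with $f(w(1)) \geq \cdots \geq f(w(n)) \geq 0$ and with $f(w(i)) > f(w(i+1))$ forced whenever $i \in \Des(w)$. Writing $a_i := f(w(i)) - f(w(i+1))$ for $i=1,\dots,n-1$ and $a_n := f(w(n))$, these $f$ correspond bijectively to tuples $(a_1,\dots,a_n) \in \NN^n$ with $a_i \geq 1$ for $i \in \Des(w)$. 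Under this change of variables $f = \sum_{i=1}^n a_i \chi_{w|_{[1,i]}}$, so $\xx^f = \prod_{i=1}^n (\xx^{w|_{[1,i]}})^{a_i}$, and $|f| = \sum_i i\, a_i$ since $|w|_{[1,i]}| = i$.

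Next I would compute $\nu(f)$ in terms of the $a_i$. By Proposition~\ref{P-partition-expressions-prop}(ii) and Remark~\ref{natural-labelling-remark}, $f$ decomposes as a multiset of connected order ideals obtained by taking, for each level $k$, the connected components of the order ideal $f^{-1}(\{k,k+1,\dots\})$; in the $w$-coordinates the distinct level ideals appearing are exactly the $w|_{[1,i]}$ with $a_i \geq 1$, each contributing $c_P(w|_{[1,i]})$ connected components with multiplicity $a_i$. Hence $\nu(f) = \sum_{i : a_i \geq 1} a_i\, c_P(w|_{[1,i]})$. This is the one slightly delicate point: one must check that a level ideal $I_k = w|_{[1,i]}$ occurs for precisely those $i$ with $a_i \geq 1$ (equivalently $f(w(i)) > f(w(i+1))$ or $i=n$ with $f(w(n))>0$), and that distinct such $i$ give the level ideals the correct multiplicities $a_i$ — this is exactly the content of the multiplicity statement in Remark~\ref{natural-labelling-remark}.

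Now I would sum the monomial $t^{\nu(f)} \xx^f$ over all $f$ assigned to $w$, which by the above is
$$
\sum_{\substack{(a_1,\dots,a_n) \in \NN^n \\ a_i \geq 1 \text{ for } i \in \Des(w)}}
\prod_{i=1}^n \bigl( t^{c_P(w|_{[1,i]})} \xx^{w|_{[1,i]}} \bigr)^{a_i}.
$$
This factors as a product over $i$ of geometric series: for $i \notin \Des(w)$ the sum over $a_i \geq 0$ gives $1/(1 - t^{c_P(w|_{[1,i]})} \xx^{w|_{[1,i]}})$, while for $i \in \Des(w)$ the sum over $a_i \geq 1$ gives $t^{c_P(w|_{[1,i]})} \xx^{w|_{[1,i]}}/(1 - t^{c_P(w|_{[1,i]})} \xx^{w|_{[1,i]}})$; pulling the extra numerator factors together over $i \in \Des(w)$ produces $t^{\sum_{i \in \Des(w)} c_P(w|_{[1,i]})} \prod_{i \in \Des(w)} \xx^{w|_{[1,i]}} = t^{\des_P(w)} \prod_{i \in \Des(w)} \xx^{w|_{[1,i]}}$, which is precisely the numerator in \eqref{generating-function-in-t,x}. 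Summing over $w \in \LLL(P)$ and invoking the disjointness/exhaustiveness from Proposition~\ref{fundamental-P-partition-proposition} yields \eqref{generating-function-in-t,x}.

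Finally, the remaining identities are pure specialization: \eqref{generating-function-in-x}, \eqref{generating-function-in-t,q}, \eqref{generating-function-in-t} and \eqref{generating-function-in-q} follow by setting $t=1$, $x_i=q$, $q=1$, or combinations thereof, using $\sum_{i \in \Des(w)} i = \maj(w)$ and $\sum_i i\,a_i = |f|$; note that when $t=1$ and $x_i=q$ the denominator $\prod_{i=1}^n(1-q^i)$ no longer depends on $w$, which is why it can be pulled to the left-hand side in \eqref{generating-function-in-q}. For \eqref{linear-extension-count-as-limit}, I would take $q \to 1$ in \eqref{generating-function-in-q}: the left side tends (after cancellation, since $\sum_f q^{|f|}$ has a pole of order exactly $n$ at $q=1$ as the cone $\AAA(P)$ is $n$-dimensional) to the stated limit, while the right side is a polynomial in $q$ tending to $\sum_{w \in \LLL(P)} 1 = |\LLL(P)|$; one only needs that the limit of the left side exists, which is automatic from equality with the right side. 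The main obstacle, such as it is, is the verification of the formula $\nu(f) = \sum_i a_i\, c_P(w|_{[1,i]})$ — i.e.\ matching the connected-component decomposition of Proposition~\ref{P-partition-expressions-prop}(ii) against the $w$-coordinates — but this is exactly what Remark~\ref{natural-labelling-remark} records, so the argument is essentially a careful unwinding of definitions plus summing geometric series.
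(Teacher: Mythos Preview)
Your proposal is correct and follows the same approach the paper sketches: decompose $\AAA(P)$ according to the linear extension $w$ from Proposition~\ref{fundamental-P-partition-proposition}, compute $\nu(f)$ by counting connected components of the level ideals $I_k$ via Proposition~\ref{P-partition-expressions-prop}, and sum geometric series. The paper's proof is a two-line hint to do exactly this, so your version is simply a fully fleshed-out execution of that hint; the only minor remark is that the multiplicity identity $\nu(f)=\sum_i a_i\,c_P(w|_{[1,i]})$ holds for arbitrary (not just naturally labelled) $P$, since the level sets $I_k$ of any $f\in\AAA(P)$ are still initial segments $w|_{[1,i]}$ once $w$ is the permutation from Proposition~\ref{fundamental-P-partition-proposition}, even though Remark~\ref{natural-labelling-remark} only states this explicitly in the naturally labelled case.
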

\begin{proof}
To prove \eqref{generating-function-in-t,x},
use Proposition~\ref{P-partition-expressions-prop}(i) to write
the sum on the left as a sum over $w$ in $\LLL(P)$, and for each 
$P$-partition $f$, think about
how many connected order ideals (counted with multiplicity) will be
in the corresponding multiset from Proposition~\ref{P-partition-expressions-prop}(ii).
\end{proof}

We remark that the specializations to $t=1$ that appear 
in Corollary~\ref{generating-functions-corollary}, 
namely \eqref{generating-function-in-x} and its specializations
\eqref{generating-function-in-q}, \eqref{linear-extension-count-as-limit},
are all part of Stanley's traditional $P$-partition theory; see
\cite[\S 4.5]{Stanley-EC}.

\begin{example}
\label{ex:five-element-example}
For this naturally labelled poset $P$ on $\{1,2,3,4,5\}$
$$
\epsfxsize=20mm
\epsfbox{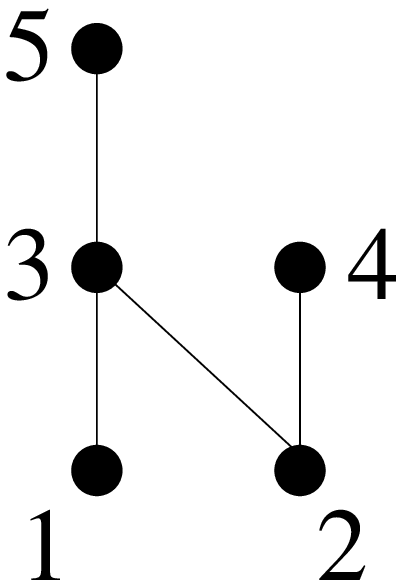}
$$
the expression in \eqref{generating-function-in-t,x} can be
computed using the following data

\begin{tabular}{|l|c|}\hline
\text{ nonempty ideal }$J \in \JJJ(P) $& $c_P(J)$ \\\hline\hline
$\{1\}$ & $1$ \\ \hline
$\{2\}$ & $1$ \\ \hline
$\{1,2\}$ & $2$ \\ \hline
$\{2,4\}$ & $1$ \\ \hline
$\{1,2,3\}$ & $1$ \\ \hline
$\{1,2,4\}$ & $2$ \\ \hline
$\{1,2,3,4\}$ & $1$ \\ \hline
$\{1,2,3,5\}$ & $1$ \\ \hline
$\{1,2,3,4,5\}$ & $1$ \\ \hline
\end{tabular}
\qquad
\begin{tabular}{|l|c|c|}\hline
$w \in \LLL(P)$ & $\des_P(w)$ \\\hline\hline
$12345$ & $0$ \\\hline
$1235\cdot 4$ & $1$ \\\hline
$124\cdot 35$ & $2$ \\\hline
$2\cdot1345$ & $1$ \\\hline
$2\cdot135\cdot4$ & $1+1=2$ \\\hline
$2\cdot14\cdot35$ & $1+2=3$ \\\hline
$24\cdot135$ & $1$ \\\hline
\end{tabular}
\vskip.1in
as the sum
$$
\begin{aligned}
&\sum_{f \in \AAA(P)} t^{\nu(f)} \xx^f =
\sum_{w \in \LLL(P)} \frac{ t^{\des_P(w)} 
                        \prod_{i \in \Des(w)}\xx^{w|_{[1,i]}} }
                  { \prod_{i=1}^n (1 - t^{c_P(w|_{[1,i]})} \xx^{w|_{[1,i]}} ) }=\\
% ( w=12345 term )
&\frac{1}{(1-tx_1 )(1-t^2x_1 x_2 )(1-tx_1 x_2 x_3 )
    (1-tx_1 x_2 x_3 x_4 )(1-tx_1 x_2 x_3 x_4 x_5 )} +\\
%( w=12354 term )
&\frac{tx_1 x_2 x_3 x_5}{(1-tx_1 )(1-t^2x_1 x_2 )(1-tx_1 x_2 x_3 )
    (1-tx_1 x_2 x_3 x_5 )(1-tx_1 x_2 x_3 x_4 x_5 )} +\\
%( w = 12435 term )
&\frac{t^2x_1 x_2 x_4}{(1-tx_1 )(1-t^2x_1 x_2 )(1-t^2x_1 x_2 x_4 )
    (1-tx_1 x_2 x_3 x_4 )(1-tx_1 x_2 x_3 x_4 x_5 )} +\\
%( w=21345 term )
&\frac{tx_2}{(1-tx_2 )(1-t^2x_1 x_2 )(1-tx_1 x_2 x_3 )
    (1-tx_1 x_2 x_3 x_4 )(1-tx_1 x_2 x_3 x_4 x_5 )} +\\
%( w=21354 term )
&\frac{tx_2 \cdot tx_1 x_2 x_3 x_5}{(1-tx_2 )(1-t^2x_1 x_2 )(1-tx_1 x_2 x_3 )
    (1-tx_1 x_2 x_3 x_5 )(1-tx_1 x_2 x_3 x_4 x_5 )} +\\
%( w=21435 term )
&\frac{tx_2 \cdot t^2x_1 x_2 x_4}{(1-tx_2 )(1-t^2x_1 x_2 )(1-t^2x_1 x_2 x_4 )
    (1-tx_1 x_2 x_3 x_4 )(1-tx_1 x_2 x_3 x_4 x_5 )} +\\
%( w=24135 term )
&\frac{tx_2 x_4}{(1-tx_2 )(1-tx_2 x_4 )(1-t^2x_1 x_2 x_4 )
    (1-tx_1 x_2 x_3 x_4 )(1-tx_1 x_2 x_3 x_4 x_5 )}
\end{aligned}
$$
which simplifies over a common denominator, after cancellations, to give
%\frac{1-t^2 (x_1 x_2^2 x_3 x_4 + x_1 x_2^2 x_3 x_4 x_5 
%                 + x_1^2 x_2^2 x_3^2 x_4 x_5)
%+ t^3 (x_1^2 x_2^3 x_3^2 x_4 x_5 + x_1^2 x_2^3 x_3^2 x_4^2 x_5)}
%{(1-tx_1 )(1-tx_2 )(1-tx_2 x_4 )(1-tx_1 x_2 x_3 )
%    (1-tx_1 x_2 x_3 x_4 )(1-tx_1 x_2 x_3 x_5)(1-tx_1 x_2 x_3 x_4 x_5 )}
$$
\frac{1-t^2 \left(\xx^{(1,2,1,1,0)} + \xx^{(1,2,1,1,1)} + \xx^{(2,2,2,1,1)}\right)
+ t^3 \left(\xx^{(2,3,2,1,1)} + \xx^{(2,3,2,2,1)}\right)}
{\prod_{J \in \Jconn(P)} \left( 1-t \xx^J \right) }.
$$
The form of this last expression should be compared with
Corollary~\ref{associated-graded-Hilbert-series-corollary}(ii).
%
%Starting with this last expression, one can
%$x_i=q$ for all $i$, giving 
%$$
%\frac{1+tq^4-t^2(q^5+q^6)}
%         {(1-tq)^2(1-tq^2)(1-tq^3)(1-tq^4)(1-tq^5)}
%$$
%which one can further specialize at either $q=1$ to give
%$
%\frac{1+2t}{(1-t)^5}
%$
%or instead specialize at $t=1$ and multiply by 
%$(1-q)(1-q^2)(1-q^3)(1-q^4)(1-q^5)$ to give
%$$
%1+q+q^2+q^3+2q^4+q^5 = \sum_{w \in \LLL(P)} q^{\maj(w)}.
%$$
\end{example}

%%%%%%%%%%%%%%%%%%%%%%%%%%%%%%%%%%%%%%%%%%%%%%%%%%%%%%%%%%%%%%%%%%%%%%%%%%%%%
\section{First proof of Theorem~\ref{thm:duplicated-forest-extensions}:  inclusion-exclusion}
\label{first-forest-proof-section}
%%%%%%%%%%%%%%%%%%%%%%%%%%%%%%%%%%%%%%%%%%%%%%%%%%%%%%%%%%%%%%%%%%%%%%%%%%%%%

We begin the proof with the following lemma, partly
asserted already in the Introduction
as \eqref{forest-with-duplication-ideal-description}.  
Recall that for a forest with duplications $P$,
we denote by $\DDD(P)$ the collection of all pairs $\{a,a'\}$ that
arise by the duplication steps in the construction of $P$.
The set $\DDD(P)$ is well-defined
(it does not depend on the construction of $P$),
as shown by the following lemma.

\begin{lemma}
\label{lemma:duplicated-forest-structure}
   Let $P$ be a forest with duplications on $\{1,2,\ldots,n\}$. 
    \begin{itemize}
        \item[(i)] The duplication pairs in $\DDD(P)$
            are pairwise disjoint:  for any
            $\{a,a'\}, \{b,b'\}$ in $\DDD(P)$, 
            either $\{a,a'\}= \{b,b'\}$
            or $\{a,a'\} \cap \{b,b'\}=\varnothing$.
        \item[(ii)] The set $\Jconn(P)$ of nonempty connected order-ideals of $P$ are the principal
            ideals $P_{\leq p}$ (for $p \in P$), and the 
            unions $P_{\leq a} \cup P_{\leq a'}$ for $\{a,a'\}$ in $\DDD(P)$.
        \item[(iii)] The set $\Pi(P)$ of pairs $\{J_1,J_2\}$ of connected order-ideals 
            of $P$ intersecting non-trivially are the pairs 
            $\big\{P_{\leq a},P_{\leq a'}\big\}$
            for $\{a,a'\}$ in $\DDD(P)$.
    \end{itemize}
    \label{LemDPF}
\end{lemma}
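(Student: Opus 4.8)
The plan is to induct on the number of operations (disjoint union, hanging, duplication of a hanger) appearing in a fixed construction of $P$ from one-element posets. Such a construction produces a definite set $\DDD(P)$; but since parts (ii) and (iii) will describe $\Jconn(P)$ and $\Pi(P)$ intrinsically in terms of $P$ alone, and since a pair $\{P_{\le a},P_{\le a'}\}$ in $\Pi(P)$ recovers the duplication pair $\{a,a'\}$ as the set of maximal elements of $P_{\le a}\cup P_{\le a'}$, proving (i)--(iii) simultaneously shows that $\DDD(P)$ is an invariant of the poset $P$. Alongside (i)--(iii) I would propagate one bookkeeping invariant: for every $\{a,a'\}\in\DDD(P)$ the elements $a,a'$ are incomparable, the strict down-sets $P_{<a}$ and $P_{<a'}$ coincide and are nonempty, and $\{a,a'\}$ is exactly the set of maximal elements of $P_{\le a}\cup P_{\le a'}$. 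Since a $\DDD$-union then has two maximal elements whereas a principal ideal has one, this also yields disjointness of the two families in \eqref{forest-with-duplication-ideal-description} and bijectivity of the indexings there. The base case (a one-element poset) is trivial, and the \emph{disjoint union} step $P=P_1\sqcup P_2$ is routine: a connected ideal of $P$ lies entirely in one $P_i$, and the two members of a nontrivially-intersecting pair lie in a common $P_i$, so $\Jconn(P)$, $\Pi(P)$, $\DDD(P)$ and the list of principal ideals all split as disjoint unions and the claim follows from the two inductive hypotheses, the $\DDD$-pairs remaining disjoint since $P_1,P_2$ have disjoint ground sets.

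For the \emph{hanging} step, where $P$ is obtained by hanging $P_2$ below $a$ in $P_1$, the structural point is that every element of $P_2$ lies below $a$, so the only covering relations of $P$ between $P_1$ and $P_2$ are $m\lessdot a$ with $m$ a maximal element of $P_2$. Hence a connected order ideal $J$ of $P$ is of exactly one of three types: contained in $P_1$, equivalently $J\in\Jconn(P_1)$ avoiding the up-set of $a$ in $P_1$; contained in $P_2$, equivalently $J\in\Jconn(P_2)$; or meeting both, in which case any Hasse path joining the two parts passes through $a$, forcing $a\in J$ and hence $P_2\subseteq J$, so that $J=J'\cup P_2$ for a unique $J'\in\Jconn(P_1)$ with $a\in J'$. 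Feeding in the inductive descriptions of $\Jconn(P_1)$, $\Jconn(P_2)$ and the fact that $\DDD(P)=\DDD(P_1)\sqcup\DDD(P_2)$, one checks that these three families are precisely the principal ideals and $\DDD$-unions of $P$, giving (ii); here the invariant that twins have equal strict down-sets forces ``$J'$ meets the up-set of $a$'' to hold for both or neither half of a $\DDD$-pair, so that the two halves get altered in the same way. Part (iii) is a short case check over the three types: a pair with one member contained in $P_1$ and the other in $P_2$ is disjoint; a pair with one member containing $P_2$ and the other contained in $P_2$ is nested; and any surviving nontrivially-intersecting pair, after intersecting both members with $P_1$ (or observing that both lie in $P_2$), reduces to a nontrivially-intersecting pair of connected ideals of $P_1$ (or $P_2$), which by induction has the form $\{P_{\le c},P_{\le c'}\}$ with $\{c,c'\}\in\DDD(P)$.

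For the \emph{duplication} step, $P=P_0\sqcup\{a'\}$ obtained by duplicating a hanger $a$ of $P_0$, the new data are the pair $\{a,a'\}\in\DDD(P)$, the connected ideal $P_{\le a}\cup P_{\le a'}$, and the pair $\{P_{\le a},P_{\le a'}\}$, whose intersection is $P_{<a}=P_{<a'}\ne\varnothing$ and neither member of which contains the other; the bookkeeping invariant for $\{a,a'\}$ is immediate from the construction. The key point for (i) is that $a$ cannot already lie in a pair of $\DDD(P_0)$: if $\{a,\tilde a\}\in\DDD(P_0)$, then by the invariant $\tilde a$ is incomparable to $a$ and covers a maximal element $m$ of $(P_0)_{<\tilde a}=(P_0)_{<a}\ne\varnothing$, so $m\lessdot\tilde a$ is a Hasse path from $(P_0)_{<a}$ to $P_0\setminus(P_0)_{\le a}$ avoiding $a$, contradicting that $a$ is a hanger; thus $\{a,a'\}$ is disjoint from all pairs of $\DDD(P_0)$ and (i) follows. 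For (ii) and (iii) one notes that covering relations among elements of $P_0$ are unchanged in passing to $P$, so a connected order ideal of $P$ either avoids $a'$ — and is then a connected order ideal of $P_0$ which, to be an order ideal of $P$, must also omit every element strictly above $a$ — or contains $a'$, and then applying the automorphism of $P$ that swaps the twins $a$ and $a'$ reduces matters to the previous case; pushing this through identifies $P_{\le a}\cup P_{\le a'}$ as the only connected ideal not already carried over from $P_0$ and $\{P_{\le a},P_{\le a'}\}$ as the only new nontrivially-intersecting pair.

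The obstacles are concentrated in the last two steps. In the hanging step the work is the bookkeeping: establishing the trichotomy of connected order ideals and verifying that it matches the principal ideals and $\DDD$-unions of $P$ \emph{bijectively} — nothing extra, nothing missing — both for $\Jconn(P)$ and for $\Pi(P)$, which is exactly why carrying the twin invariant is worthwhile. In the duplication step the delicate observation is the ``a duplicated element is never a hanger'' fact underlying part (i); granting that, the remaining analysis (each pre-existing connected ideal or nontrivial pair changes at most by adjoining $a'$, and there is exactly one new ideal and one new pair) is routine provided one keeps in view the twin automorphism and the constraint that an order ideal of $P$ containing $a'$ must contain everything above $a$.
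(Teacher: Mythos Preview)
Your overall architecture matches the paper's: induct on the construction operations, with the disjoint-union step trivial, the hanging step a routine case analysis, and the duplication step the interesting one. Your treatment of (i) as ``a hanger cannot already be a twin'' is the contrapositive of the paper's ``once duplicated, never again a hanger''; the underlying observation is the same Hasse edge $m\lessdot a'$ with $m<a$.

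However, the bookkeeping invariant you propagate is too strong and is \emph{not} preserved by hanging. You claim that for every $\{a,a'\}\in\DDD(P)$ one has $P_{<a}=P_{<a'}$. But consider: start with $x<y$, duplicate the hanger $y$ to get $\{y,y'\}$, then hang a single element $z$ below $y$. In the resulting poset $P$ one has $\{y,y'\}\in\DDD(P)$ yet $P_{<y}=\{x,z\}\neq\{x\}=P_{<y'}$. The point is that the hanging operation allows hanging below \emph{any} element, including one member of a duplication pair, and then the twins' down-sets diverge. Consequently your sentence in the hanging step, ``the invariant that twins have equal strict down-sets forces `$J'$ meets the up-set of $a$' to hold for both or neither half of a $\DDD$-pair,'' is false exactly when the hanging point $a$ is itself one of the twins. (The conclusion of (ii) still holds in that case --- $P_{\le c}\cup P_{\le c'}$ is the right $\DDD$-union regardless --- but your justification does not.) More seriously, your argument for (i) in the duplication step invokes $(P_0)_{<\tilde a}=(P_0)_{<a}$ to find an $m$ with $m\lessdot\tilde a$ and $m\in(P_0)_{<a}$; once the equality can fail, this deduction is unsupported.

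The fix is to carry the weaker invariant the paper uses implicitly: for every $\{a,a'\}\in\DDD(P)$ there exists an element $m$ with $m\lessdot a$ and $m\lessdot a'$ in the Hasse diagram. This common cover is created at the duplication step and one checks it survives all three operations (nothing is ever inserted strictly between $m$ and $a$ or between $m$ and $a'$). That single edge $m\lessdot a'$ then witnesses that $a$ is not a hanger, which is all you need for (i); and the hanging-step analysis of (ii) goes through without the ``both or neither'' claim, since each connected ideal of $P_1$ --- principal or $\DDD$-union --- maps to the corresponding connected ideal of $P$ whether or not its maximal elements straddle $a$. With this correction your argument is complete and essentially the paper's; your use of the swap automorphism in the duplication step is a minor stylistic alternative to the paper's collapsing surjection $\pi:P'\twoheadrightarrow P$.
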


\begin{proof}
Assertion (i) is equivalent to saying that,
in building up a forest with duplications, once
a duplication pair $\{a,a'\}$ is created from
duplicating a hanger $a$ in a poset
$P$, then neither $a$ nor $a'$ will ever be a hanger at some later
stage of the construction.  To see this, note that
any element $p$ in the nonempty poset $P_{<a}$ which is
covered by $a$ will also be covered by $a'$ after the duplication.
Thus in the new poset $P'$ after duplication, $p$ 
has a single-edge path to 
the element $a'$ of $P\setminus (P')_{\leq a}$ 
avoiding $a$, and similarly $p$ 
has a single-edge path to 
the element $a$ of $P\setminus (P')_{\leq a'}$ 
avoiding $a'$.  These single-edge
paths cannot be destroyed by any of the further
constructions, so neither $a$ nor $a'$ will ever be a hanger 
that is later duplicated.

We prove assertions (ii) and (iii) by induction
on the cardinality of $P$, that is, on the number
of operations used in constructing $P$.
It suffices to show that they remain true 
when performing any of the three construction operations.
This is trivial for the disjoint union construction,
and straightforward for the hanging construction.

For the duplication of a hanger operation, we argue more carefully.
Assume that $P'$ is obtained from the forest with duplications 
$P$ by duplicating the hanger $a$, to form a new pair
$\{a,a'\}$ with $\DDD(P') = \DDD(P) \sqcup \{\{a,a'\}\}$. 
We will make use of the order-preserving surjection
$\pi: P' \twoheadrightarrow P$ that collapses both $a$ and $a'$
to $a$.

For assertion (ii), note that $\pi$ sends any connected order ideal $J'$ 
in $\Jconn(P')$ to a connected order ideal $J:=\pi(J')$ 
in $\Jconn(P)$.   By induction, one knows that
$J$ is either of the form $J=P_{\leq p}$, or of the form
$P_{\leq b} \cup P_{\leq b'}$ where $\{b,b'\}$ lies in $\DDD(P)$.
It is now straightforward to check that
\begin{enumerate}
\item[$\bullet$]
if $J=P_{\leq p}$ for some $p \neq a$, then $J'=(P')_{\leq p}$,
\item[$\bullet$]
if $J=P_{\leq a}$, then $J'$ is either $(P')_{\leq a}$ or $(P')_{\leq a'}$
or $(P')_{\leq a} \cup (P')_{\leq a'}$, and
\item[$\bullet$]
if $J=P_{\leq b} \cup P_{\leq b'}$ where $\{b,b'\}$ lies in $\DDD(P)$,
then $J'=(P')_{\leq b} \cup (P')_{\leq b'}$.
\end{enumerate}
Thus $\Jconn(P')$ is exactly as described.

For assertion (iii), first note that $\{(P')_{\leq a}, (P')_{\leq a'}\}$ is a pair
of connected order ideals intersecting nontrivially, and hence lies in $\Pi(P)$.
Now assume $J'$ is in $\Jconn(P')$, but $J' \neq (P')_{\leq a}, (P')_{\leq a'}$.
We have seen above that $J'=\pi^{-1}(J)$ for some $J$ in $\Jconn(P)$.
If $J$ contains $a$, then $J'$ contains both $(P')_{\leq a}, (P')_{\leq a'}$,
and hence has trivial intersection with either of them. If $J$ does not contain $a$,
then since $a$ is a hanger in $P$, connectivity of $J$ forces it to lie entirely
in $P_{<a}$ or $P\setminus P_{\leq a}$, and will still have trivial intersection
with either of $P_{\leq a}, P_{\leq a'}$.  This analysis shows that the pairs
$\{J_1',J_2'\}$ in $\Pi(P')$ other than $\{(P')_{\leq a}, (P')_{\leq b'}\}$
are of the form $\{\pi^{-1}(J_1),\pi^{-1}(J_2)\}$ for some pair
$\{J_1,J_2\}$ in $\Pi(P)$.  By induction,
$\{J_1,J_2\}=\{P_{\leq b},P_{\leq b'}\}$ 
for some $\{b,b'\}$ in $\DDD(P)$, and then one can check that
$\{J'_1,J'_2\}=\{(P')_{\leq b},(P')_{\leq b'}\}$.
\end{proof}

The next result is the crux of Theorem~\ref{thm:duplicated-forest-extensions},
and will follow easily via inclusion-exclusion from 
Lemma~\ref{lemma:duplicated-forest-structure}.

\begin{theorem}
\label{thm:essence-of-factorization}
For a forest with duplications $P$ on $n$ elements,
one has
$$
\sum_{f \in \AAA^\weak(P)} t^{\nu(f)} \xx^f =
\frac{\prod_{\{J_1,J_2\} \in \Pi(P)} \left( 1-t^2 \xx^{J_1} \xx^{J_2} \right)}
     {\prod_{J \in \Jconn(P)} \left( 1-t \xx^J \right)}.
$$
Setting $t=1$ and $x_i=q$ for all $i$, this gives
\begin{equation}
\label{q-forest-with-duplication-gf}
\sum_{f \in \AAA^\weak(P)} q^{|f|} =
\frac{\prod_{\{J_1,J_2\} \in \Pi(P)}\left( 1- q^{|J_1|+|J_2|}\right)}
     {\prod_{J \in \Jconn(P)} \left( 1-q^{|J|}\right) }.
\end{equation}
\end{theorem}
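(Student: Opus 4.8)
The plan is to compute the generating function $\sum_{f \in \AAA^\weak(P)} t^{\nu(f)} \xx^f$ directly from the unique expression in Proposition~\ref{P-partition-expressions-prop}(ii), using the explicit description of $\Jconn(P)$ and $\Pi(P)$ provided by Lemma~\ref{lemma:duplicated-forest-structure}. Since $P$ is a forest with duplications, Lemma~\ref{lemma:duplicated-forest-structure} tells us that the connected order ideals are exactly the principal ideals $P_{\leq p}$ together with the ideals $P_{\leq a} \cup P_{\leq a'}$ for $\{a,a'\} \in \DDD(P)$, and that the only pairs intersecting nontrivially are the pairs $\{P_{\leq a}, P_{\leq a'}\}$ for $\{a,a'\} \in \DDD(P)$. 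The ``pairwise intersect trivially'' constraint in Proposition~\ref{P-partition-expressions-prop}(ii) therefore reduces to a very local condition: a multiset of connected order ideals is admissible if and only if, for each duplication pair $\{a,a'\} \in \DDD(P)$, the ideals $P_{\leq a}$ and $P_{\leq a'}$ do \emph{not} both appear in it; every other pair of connected ideals of $P$ automatically intersects trivially.

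First I would record the naive generating function one gets by ignoring all constraints: if every multiset of connected order ideals were admissible, the generating function for $\sum_f t^{\nu(f)}\xx^f$ summed over all such multisets would be $\prod_{J \in \Jconn(P)} (1 - t\,\xx^J)^{-1}$, since choosing a multiset amounts to choosing, independently for each $J$, a nonnegative multiplicity $m_J$, contributing $(t\,\xx^J)^{m_J}$. Next I would impose the constraints by inclusion-exclusion over the subsets of ``violated'' duplication pairs. For a subset $B \subseteq \DDD(P)$, the multisets in which $P_{\leq a}$ and $P_{\leq a'}$ both appear with multiplicity $\geq 1$ for every $\{a,a'\} \in B$ have generating function $\prod_{\{a,a'\} \in B} \dfrac{t\,\xx^{P_{\leq a}} \cdot t\,\xx^{P_{\leq a'}}}{(1-t\,\xx^{P_{\leq a}})(1-t\,\xx^{P_{\leq a'}})} \cdot \prod_{J \in \Jconn(P)} (1-t\,\xx^J)^{-1}$ --- wait, one must be careful not to double-count the $P_{\leq a}$ and $P_{\leq a'}$ factors; cleaner is to write the sum over multisets as a product over $J \in \Jconn(P)$, and for each duplication pair $\{a,a'\}$ treat the contribution of the two variables $m_{P_{\leq a}}, m_{P_{\leq a'}}$ jointly as $\sum_{m,m' \geq 0} (t\,\xx^{P_{\leq a}})^m (t\,\xx^{P_{\leq a'}})^{m'} - \sum_{m,m'\geq 1}(\cdots) = \dfrac{1}{1-t\xx^{P_{\leq a}}} \cdot \dfrac{1}{1-t\xx^{P_{\leq a'}}} - \dfrac{t\xx^{P_{\leq a}}}{1-t\xx^{P_{\leq a}}}\cdot \dfrac{t\xx^{P_{\leq a'}}}{1-t\xx^{P_{\leq a'}}} = \dfrac{1 - t^2 \xx^{P_{\leq a}}\xx^{P_{\leq a'}}}{(1-t\xx^{P_{\leq a}})(1-t\xx^{P_{\leq a'}})}$, using here that $P_{\leq a} \sqcup P_{\leq a'}$ is disjoint so that $\xx^{P_{\leq a}}\xx^{P_{\leq a'}} = \xx^{P_{\leq a} \sqcup P_{\leq a'}}$. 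Multiplying this over all $\{a,a'\} \in \DDD(P)$ and the factors $(1-t\xx^J)^{-1}$ over the remaining $J \in \Jconn(P)$ (namely the principal ideals not of the form $P_{\leq a}$ with $a$ in a duplication pair, together with the ideals $P_{\leq a} \cup P_{\leq a'}$) yields exactly $\dfrac{\prod_{\{J_1,J_2\}\in\Pi(P)}(1-t^2\xx^{J_1}\xx^{J_2})}{\prod_{J\in\Jconn(P)}(1-t\xx^J)}$, since $\Pi(P) = \{\{P_{\leq a},P_{\leq a'}\}\}_{\{a,a'\}\in\DDD(P)}$.

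The one genuine point requiring care --- and the step I expect to be the main obstacle --- is justifying that the generating function factors as a product over $J \in \Jconn(P)$ with the duplication pairs handled jointly; this is exactly the assertion that the only obstruction to admissibility is the pairwise one for duplication pairs, and that these obstructions are ``independent'' across distinct duplication pairs. Independence follows from part (i) of Lemma~\ref{lemma:duplicated-forest-structure}: the duplication pairs are pairwise disjoint, so the events ``both $P_{\leq a}, P_{\leq a'}$ appear'' for distinct pairs involve disjoint sets of generators $U_J$ and hence factor. That every non-duplication pair of connected ideals intersects trivially is precisely part (iii). Once this factorization is in hand, the substitution $t=1$, $x_i = q$ is immediate: each $\xx^J$ becomes $q^{|J|}$ and each $\xx^{J_1}\xx^{J_2}$ becomes $q^{|J_1|+|J_2|}$, giving \eqref{q-forest-with-duplication-gf}, and this completes the proof.
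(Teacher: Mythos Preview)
Your approach is essentially the same as the paper's: both invoke Proposition~\ref{P-partition-expressions-prop}(ii) to rewrite the sum as one over admissible multisets of connected order ideals, use Lemma~\ref{lemma:duplicated-forest-structure}(iii) to reduce the admissibility condition to ``no duplication pair $\{P_{\leq a},P_{\leq a'}\}$ both appear,'' and then handle this by inclusion-exclusion (the paper sums over subsets $\EEE\subseteq\DDD(P)$ explicitly, you factor per pair; these are the same computation).

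One slip to fix: the parenthetical remark ``using here that $P_{\leq a}\sqcup P_{\leq a'}$ is disjoint so that $\xx^{P_{\leq a}}\xx^{P_{\leq a'}}=\xx^{P_{\leq a}\sqcup P_{\leq a'}}$'' is false and should be deleted. The ideals $P_{\leq a}$ and $P_{\leq a'}$ intersect \emph{nontrivially} (that is precisely why $\{P_{\leq a},P_{\leq a'}\}\in\Pi(P)$); their intersection is $P_{<a}=P_{<a'}$, which is nonempty by the definition of a hanger. Fortunately the remark is also unnecessary: the theorem statement has $\xx^{J_1}\xx^{J_2}$ as a bare product of monomials, and your algebraic identity $\tfrac{1}{(1-u)(1-v)}-\tfrac{uv}{(1-u)(1-v)}=\tfrac{1-uv}{(1-u)(1-v)}$ already gives exactly that.
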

\begin{proof}
Given a forest with duplications $P$,
we wish to evaluate
$
\sum_{f \in \AAA^\weak(P)} t^{\nu(f)} \xx^f,
$
where the sum runs over all weak $P$-partitions $f$.
By Proposition~\ref{P-partition-expressions-prop}, 
this is the same as the sum
$
\sum_{ \{J_i\}} \prod_{i} t\xx^{J_i}
$
over all multisubsets $\{J_i\}$ of $\Jconn(P)$
for which the $\{J_i\}$ pairwise intersect trivially.
By Lemma~\ref{lemma:duplicated-forest-structure}
this is equivalent to saying that the multiset $\{J_i\}$
contains no pair $\{P_{\leq a}, P_{\leq a'}\}$ with $\{a,a'\}$ in $\DDD(P)$.
Using inclusion-exclusion, this sum then equals
$$
\sum_{ \EEE \subseteq \DDD(P)  }  (-1)^{|\EEE|} 
    \sum_{ \{J_i\} } \prod_{i} t\xx^{J_i} 
$$
where the inside summation is over all
multisubsets $\{J_i\}$ of $\Jconn(P)$ that contain
at least the pair $\{ P_{\leq a},P_{\leq a'} \}$ for every
$\{a,a'\}$ in $\EEE$.  Finally, this can be rewritten
$$
\sum_{ \EEE \subseteq \DDD(P)  }  (-1)^{|\EEE|} 
     \frac{\prod_{\{a,a'\} \in \EEE} t\xx^{P_{\leq a}} \cdot t\xx^{P_{\leq a'}}}
       {\prod_{J \in \Jconn(P)} (1-t\xx^{J})} \\
=\frac{\prod_{\{a,a'\} \in \DDD(P)}  
          (1-t^2 \xx^{P_{\leq a}} \xx^{P_{\leq a'}}) }
       {\prod_{J \in \Jconn(P)} (1-t\xx^{J})}. 
$$
\end{proof}

\begin{proof}[Proof of 
  Theorem~\ref{thm:duplicated-forest-extensions}]
  Recall that for naturally labelled posets, weak $P$-partitions
  coincide with $P$-partitions.
    Then \eqref{duplicated-forest-q-formula} follows
    from \eqref{generating-function-in-q} 
    and \eqref{q-forest-with-duplication-gf}.
\end{proof}

%\subsection{Example}
%Let us begin by a small example.
%Let $P$ the 4-element poset with $1,2 <_P 3,4$.
%It is easy to check that it is a double point forest.
%One finds
%that its connected order ideals are $\{1,2,123,124,1234\}$, and the only
%pair $\{J_1,J_2\}$ in the set $\Pi(P)$ is $\{123,124\}$.  Thus
%the previous corollary implies that
%$$
%\begin{aligned}
%\sum_{w \in \LLL(P)} q^{\maj(w)} 
%&= [4]!_q \frac{ [3+3]_q }{ [1]_q [1]_q [3]_q [3]_q [4]_q } \\
%& = \frac{ [6]_q [2]_q}{ [3]_q } \\
%& = (1+q^3)(1+q) \\
%& = 1+q+q^3+q^4.
%\end{aligned}
%$$
%This agrees with the following data:
%\begin{tabular}{|c||c|c|c|c|}
%\hline
%$w \in \LLL(P)$ &   $\maj(w)$ \\ \hline\hline
%$1234$ & $0$ \\ \hline
%$2134$ & $1$ \\ \hline
%$1243$ & $3$ \\ \hline
%$2143$ & $4$ \\ \hline
%\end{tabular}\bigskip

%%%%%%%%%%%%%%%%%%%%%%%%%%%%%%%%%%%%%%%%%%%%%%%%%%%%%%%%%%
\section{The rings and their Hilbert series}
\label{associated-graded-section}
%%%%%%%%%%%%%%%%%%%%%%%%%%%%%%%%%%%%%%%%%%%%%%%%%%%%%%%%%%

We now change focus in the next few sections to 
discuss the weak $P$-partition ring $R_P$,
an example of a {\it normal affine semigroup ring}.  Good discussions of
general theory on affine semigroup rings may be 
found in Bruns and Herzog \cite[Chapter 6]{BrunsHerzog},
Miller and Sturmfels \cite[Chapter 7]{MillerSturmfels}, 
Stanley \cite[Chapter 1]{Stanley-CCA},
and Sturmfels \cite{Sturmfels}.

\begin{definition}
For $P$ a poset on $\{1,2,\ldots,n\}$,
let $R_P$ be the subalgebra of the polynomial ring $k[x_1,\ldots,x_n]$
which is spanned $k$-linearly by the monomials
$$
\xx^f:=x_1^{f(1)} \cdots x_n^{f(n)}
$$ 
as $f$ runs through all weak $P$-partitions.  
In \cite{BFLR} this was the ring denoted $R^{\mathrm{wt}}_P$.

 Let $\mm$ denote the 
maximal ideal of $R_P$ spanned $k$-linearly by all monomials $\xx^f$ with 
$f\neq 0$, so that $R_P/\mm \cong k$.  As usual, one has
the {\it $\mm$-adic} filtration 
\begin{equation}
\label{maximal-ideal-filtration}
R_P \supset \mm \supset \mm^2 \supset \mm^3 \supset \cdots
\end{equation}
and the {\it associated graded ring}
$$
\gr(R_P):= R_P /\mm \oplus \mm/\mm^2 \oplus  \mm^2/\mm^3 \oplus \ldots.
$$
In this ring $\gr(R_P)$, multiplication is defined $k$-linearly
by saying that the product of two elements
$\bar{f}$ in $\mm^i/\mm^{i+1}$ and $\bar{g}$ 
$\mm^j/\mm^{j+1}$ is $\overline{fg}$ in $\mm^{i+j}/\mm^{i+j+1}$.
\end{definition}

Note that $R_P$ has a natural $\NN^n$-multigrading, in which the
degree of $\xx^f$ is $(f(1),\ldots,f(n)) \in \NN^n$.  Then its
$\NN^n$-graded {\it Hilbert series} will be
$$
\Hilb(R_P, \xx) = \sum_{f \in \AAA^\weak(P)} \xx^f,
$$
that is
the same generating function\footnote{Again assuming that
$P$ has been {\it naturally labelled}; 
see Remark~\ref{natural-labelling-remark}.} 
that appears in \eqref{generating-function-in-x}.

Note also that $\gr(R_P)$ enjoys this same $\NN^n$-multigrading,
and even the same $\NN^n$-graded {\it Hilbert series} as $R_P$, since the
the $\mm$-adic filtration \eqref{maximal-ideal-filtration} is a filtration by
$\NN^n$-homogeneous ideals. 

We will always use the $\xx$-variable set for the power series that are
Hilbert series with respect to this $\NN^n$-multigrading.
In addition, one can collapse the $\NN^n$-multigrading to
an $\NN$-grading by letting $x_i=q$ for all $i$.
We will use the variable $q$ for power series
which are Hilbert series for this grading.

Furthermore, $\gr(R_P)$ has its standard $\NN$-grading
in which its homogeneous component of degree $i$ is $\mm^i/\mm^{i+1}$.
We call the {\it $t$-grading} and use the variable $t$ 
in the corresponding Hilbert series.

In fact, one can form an even finer Hilbert series
$
\Hilb(\gr(R_P), t, \xx)
$
that keeps track of both the $t$-grading and the $\NN^n$-multigrading.
We will see shortly that this series
is exactly the right side of \eqref{generating-function-in-t,x}.

Proposition \ref{P-partition-expressions-prop} (iii) has the
following consequence.  Fixing a field $k$, introduce a 
polynomial algebra $S=k[U_J]$ having generators
$U_J$ indexed by connected order ideals $J$ of $P$.  
For the sake of considering multigraded maps,
consider $S$ as $\NN^n$-multigraded, with the variable $U_J$ having
the same degree as the monomial $\xx^J$, namely the characteristic
vector $\chi_J$ in $\NN^n$.
In particular, when we collapse the grading into an $\NN$-grading,
the variable $U_J$ has degree $|J|$.
In addition, $S$ admits another interesting $\NN$-grading, where all $U_J$
have degree $1$, corresponding to the $t$-grading discussed earlier.

\begin{corollary}(cf. \cite[Proposition 7.1]{BFLR})
\label{cor:minimal-ring-generators}
The ring $R_P$ is minimally generated as a $k$-algebra
by the monomials $\xx^J$ as $J$ runs through $\Jconn(P)$.
In particular, these maps
$$
\begin{array}{rclcrcl}
S &\overset{\varphi}{\longrightarrow} & R_P &\text{ and }&
S &\overset{\gr(\varphi)}{\longrightarrow} &\gr(R_P)\\
U_J &\longmapsto & \xx^J &  &U_J &\longmapsto &\bar{\xx}^J.
\end{array}
$$
are multigraded $k$-algebra surjections with respect to the $\NN^n$-gradings.  
Moreover, the second map is also $\NN$-graded with respect to the $t$-gradings.
\end{corollary}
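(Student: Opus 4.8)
The plan is to verify the three asserted properties of the maps $\varphi$ and $\gr(\varphi)$ more or less directly from Proposition~\ref{P-partition-expressions-prop}. First I would record that $\varphi$ is a well-defined $k$-algebra map at all: since $\xx^{J_1} \cdot \xx^{J_2} = \xx^{\chi_{J_1}+\chi_{J_2}}$ and both $\chi_{J_1}$ and $\chi_{J_2}$ are weak $P$-partitions, their sum is again a weak $P$-partition, so the target really does contain all products of the generators; this shows the image of $\varphi$ is the $k$-span of $\xx^f$ over all $f$ that arise as nonnegative integer combinations of characteristic vectors of connected order ideals. The same computation, carried out in $\gr(R_P)$ using the definition of multiplication there (product of $\bar{f} \in \mm^i/\mm^{i+1}$ and $\bar{g}\in\mm^j/\mm^{j+1}$ is $\overline{fg}\in\mm^{i+j}/\mm^{i+j+1}$), shows $\gr(\varphi)$ is a well-defined $k$-algebra map.

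Second, surjectivity. For $\varphi$, given any weak $P$-partition $f$, Proposition~\ref{P-partition-expressions-prop}(ii) gives a multiset $J_1,\dots,J_{\nu(f)}$ of connected order ideals with $f=\sum_i \chi_{J_i}$, hence $\xx^f=\varphi(U_{J_1}\cdots U_{J_{\nu(f)}})$; since such monomials $\xx^f$ span $R_P$ over $k$, the map $\varphi$ is onto. For $\gr(\varphi)$ one argues $t$-grading by $t$-grading: the degree-$i$ piece $\mm^i/\mm^{i+1}$ of $\gr(R_P)$ is spanned by the images $\bar{\xx}^f$ of those $\xx^f$ lying in $\mm^i$ but not $\mm^{i+1}$, and I would observe (this is exactly the content of Section~\ref{associated-graded-section}'s forthcoming identification of $\nu$ with the $t$-grading, but even a crude bound suffices here) that each such $\bar{\xx}^f$ is hit by the corresponding degree-$i$ monomial in the $U_J$'s coming from the expression $f = \sum_i \chi_{J_i}$. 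Thus $\gr(\varphi)$ is onto as well, and the surjection is compatible with the $t$-grading in which each $U_J$ has degree $1$, since $U_{J_1}\cdots U_{J_{\nu(f)}}$ has $t$-degree $\nu(f)$.

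Third, multigradedness: since $\deg_{\NN^n}(U_J)=\chi_J=\deg_{\NN^n}(\xx^J)=\deg_{\NN^n}(\bar\xx^J)$ by construction of the grading on $S$, both $\varphi$ and $\gr(\varphi)$ send an $\NN^n$-homogeneous element to one of the same $\NN^n$-degree; that $\varphi$ respects the collapsed $\NN$-grading ($U_J \mapsto$ degree $|J|$) is then immediate, and the $t$-grading statement for $\gr(\varphi)$ was just recorded. Finally, minimality of the generating set: I would invoke the result quoted from \cite[Proposition 7.1]{BFLR}, or reprove it by noting that $\mm/\mm^2$ has $k$-basis given by the classes $\bar\xx^J$ for $J\in\Jconn(P)$ --- no monomial $\xx^J$ with $J$ connected lies in $\mm^2$, because if $\xx^J = \xx^{f}\cdot\xx^{g}$ with $f,g$ nonzero weak $P$-partitions then $\chi_J = f+g$ forces $J$ to split as a disjoint union of the (nonempty) supports of $f$ and $g$, contradicting connectedness --- and conversely every $\xx^f$ with $|f|\ge 2$ does lie in $\mm^2$. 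The only mild obstacle is making precise the claim that the degree-$i$ graded piece of $\gr(R_P)$ is spanned by the $\bar\xx^f$ with $f$ of ``$\nu$-value'' $i$; but this needs only the inequality that $\xx^f \in \mm^{\nu(f)}$ (clear, since $\xx^f$ is a product of $\nu(f)$ elements of $\mm$) together with the spanning statement above, which is all that surjectivity of $\gr(\varphi)$ requires --- the sharper equality $\xx^f\notin\mm^{\nu(f)+1}$ is deferred to Section~\ref{associated-graded-section}.
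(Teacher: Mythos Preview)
Your argument is essentially correct and covers the same ground as the paper, but your proof of \emph{minimality} takes a genuinely different route. The paper argues geometrically: it shows that for each $J\in\Jconn(P)$ the vector $\chi_J$ spans an extreme ray of the $P$-partition cone, by exhibiting $\RR\chi_J$ as the intersection of facet hyperplanes $x_i=0$ (for $i\notin J$) and $x_i=x_j$ (for $\{i,j\}$ running over a spanning tree of the Hasse diagram of $J$). You instead argue algebraically that $\xx^J\notin\mm^2$: if $\chi_J=f+g$ with $f,g$ nonzero weak $P$-partitions, then since $\chi_J$ is $\{0,1\}$-valued the supports of $f$ and $g$ are disjoint nonempty order ideals whose union is $J$, and two disjoint order ideals can have no Hasse-diagram edges between them, contradicting connectedness of $J$. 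Your argument is more elementary and self-contained; the paper's argument yields the stronger polyhedral statement that the $\chi_J$ are \emph{exactly} the primitive extreme-ray generators of the cone, which is of independent interest.

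One small correction: your sentence ``conversely every $\xx^f$ with $|f|\ge 2$ does lie in $\mm^2$'' is false as written, since in the paper's notation $|f|=\sum_i f(i)$, and e.g.\ $\xx^J$ for a connected ideal $J$ of size $\ge 2$ has $|\chi_J|\ge 2$ yet lies outside $\mm^2$ by what you just proved. What you need (and what you presumably intended) is that every $\xx^f$ with $\nu(f)\ge 2$ lies in $\mm^2$; this is immediate since $\xx^f$ is then a product of $\nu(f)\ge 2$ elements of $\mm$, and it gives exactly the spanning of $\mm/\mm^2$ by the $\bar\xx^J$ that you want. With that fix, your derivation of the $t$-gradedness of $\gr(\varphi)$ matches the paper's: once the $\bar\xx^J$ form a $k$-basis of $\mm/\mm^2$, each $\gr(\varphi)(U_J)=\bar\xx^J$ has $t$-degree~$1$, so the algebra map respects the $t$-grading.
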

\begin{proof}
The fact that $\{\xx^J\}_{J \in \Jconn(P)}$ minimally generate $R_P$ 
was proven in \cite[Proposition 7.1]{BFLR}, but
we repeat the proof here for completeness.

The fact that they generate $R_P$ follows
from Proposition \ref{P-partition-expressions-prop} (iii).
Their {\it minimality} follows from the claim that
the characteristic vectors $\chi_J$ for $J$ in $\Jconn(P)$
are exactly the set of primitive vectors spanning the 
extreme rays of the real cone nonnegatively spanned 
by the $P$-partitions\footnote{In \cite[Proposition 4.6.10]{Stanley-EC} such vectors are called the {\it completely fundamental} elements of the semigroup.}.

To see this claim, given $J$ in $\Jconn(P)$, consider 
the Hasse diagram for $J$ as a connected graph, and pick a
spanning tree $T$ among its edges. Then the line 
$\RR \chi_J$ is exactly the intersection of the hyperplanes
$x_i = 0$ for $i \not\in J$, and
$x_i=x_j$  for $\{i,j\}$ an edge of $T$.
All of these hyperplanes arise as cases of equality in various
half-space inequalities that define the weak $P$-partition
cone.  Hence each such $\chi_J$ spans an extreme ray of the cone.

Since $\{ \xx^J\}_{J \in \Jconn(P)}$
is a minimal generating set for $R_p$ as an algebra,
their images $\{ \overline{\xx}^J\}_{J \in \Jconn(P)}$ by $\gr(\varphi)$
give a $k$-basis for $\mm/\mm^2$ .
Hence each such element has $t$-degree $1$ and so
the map $\gr(\varphi)$ respects the $t$-grading.
\end{proof}

This result allows us to interpret combinatorially the power of $t$
in the power series
$
\Hilb(\gr(R_P), t, \xx)
$
and to obtain some information about its form.

\begin{corollary}
\label{associated-graded-Hilbert-series-corollary}
Let $P$ be any poset on $\{1,2,\ldots,n\}$.
\begin{enumerate}
\item[(i)] 
The $\NN \times \NN^n$-graded Hilbert series
for $\gr(R_P)$ is given by
$$
\Hilb(\gr(R_P),t,\xx) = \sum_{f \in \AAA^\weak(P)} t^{\nu(f)} \xx^f.
$$
\item[(ii)]
The power series in (i) can always be expressed as
$$
\frac{g(t,\xx)}{\prod_{J \in \Jconn(P)} \left( 1-t \xx^J \right)}
$$
for some polynomial $g(t,\xx)$ in $\ZZ[t,\xx]$.
\item[(iii)]
Furthermore, the
generating functions appearing in Corollary~\ref{generating-functions-corollary}
are the Hilbert series for $R_P$ or $\gr(R_P)$ with respect to
their $\NN \times \NN^n$-grading or $\NN \times \NN$-grading or
$\NN^n$ or $\NN$-grading, where appropriate.
\end{enumerate}
\end{corollary}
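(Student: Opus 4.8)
The plan is to deduce all three parts from results already in hand, with part (i) doing the real work and parts (ii)--(iii) following formally. For part (i), recall from Corollary~\ref{cor:minimal-ring-generators} that the map $\gr(\varphi)\colon S \to \gr(R_P)$ is surjective and respects both the $\NN^n$-multigrading and the $t$-grading, where $U_J$ has $t$-degree $1$. The key observation is that the images $\overline{\xx}^{J_1} \cdots \overline{\xx}^{J_m}$, as $\{J_1,\ldots,J_m\}$ ranges over multisets of connected order ideals that pairwise intersect trivially, form a $k$-spanning set for $\gr(R_P)$; I would argue moreover that they are $k$-linearly independent, because Proposition~\ref{P-partition-expressions-prop}(ii) says each weak $P$-partition $f$ (equivalently each $P$-partition, after naturally labelling) is hit by exactly one such multiset. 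Thus the monomials $\overline{\xx}^f$ form a $k$-basis of $\gr(R_P)$, and the one carrying $t$-degree equal to the number of factors in its unique expression \eqref{f-expressed-as-connected-ideals}, namely $\nu(f)$. Summing over $f$ gives the stated formula. The point requiring a little care is why the product $\overline{\xx}^{J_1}\cdots\overline{\xx}^{J_m}$ does not collapse to a lower power of $t$ in $\gr(R_P)$ — i.e. why $\xx^f \in \mm^{\nu(f)} \setminus \mm^{\nu(f)+1}$; this is the main obstacle, and I would handle it by checking that any way of writing $\xx^f$ as a product of $\nu(f)+1$ or more non-unit monomials in $R_P$ would contradict the uniqueness in Proposition~\ref{P-partition-expressions-prop}(ii), since each non-unit factor $\xx^g$ itself decomposes into at least one connected ideal and these assemble into a trivially-intersecting multiset of size $>\nu(f)$ expressing $f$.

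For part (ii), I would invoke Corollary~\ref{cor:minimal-ring-generators} once more: $\gr(R_P) = S/I^{\gr}_P$ is a quotient of the polynomial ring $S = k[U_J]_{J \in \Jconn(P)}$ by a homogeneous ideal with respect to the combined $\NN \times \NN^n$-grading in which $U_J$ has degree $(1,\chi_J)$. Hence its Hilbert series is a rational function whose denominator divides $\prod_{J \in \Jconn(P)} (1 - t\,\xx^J)$, the Hilbert series denominator of $S$ itself; writing it over this common denominator produces a numerator polynomial $g(t,\xx) \in \ZZ[t,\xx]$. (The numerator has integer, not merely rational, coefficients because it equals $\Hilb(\gr(R_P),t,\xx)\cdot\prod_J(1-t\xx^J)$, a product of a $\ZZ$-coefficient power series with a $\ZZ$-coefficient polynomial, which terminates.)

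Part (iii) is then a bookkeeping statement: setting $t = 1$ in part (i) recovers $\Hilb(R_P,\xx) = \sum_{f}\xx^f$ of \eqref{generating-function-in-x}, since the $\mm$-adic filtration is by $\NN^n$-homogeneous ideals and so $R_P$ and $\gr(R_P)$ share the same $\NN^n$-graded Hilbert series; specializing further $x_i = q$ collapses the $\NN^n$-grading to the single grading by $|f|$ appearing in \eqref{generating-function-in-q} and \eqref{generating-function-in-t,q}; and the right-hand sides of \eqref{generating-function-in-t,x} and its specializations in Corollary~\ref{generating-functions-corollary} are, by construction of those identities via Proposition~\ref{P-partition-expressions-prop}, literally equal to the left-hand sides, hence to the Hilbert series in question. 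I expect no obstacle here beyond carefully matching each grading specialization ($\NN\times\NN^n$, $\NN\times\NN$, $\NN^n$, $\NN$) to the correct displayed generating function.
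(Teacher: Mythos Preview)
Your overall strategy matches the paper's: part (i) via the surjection $\gr(\varphi)$ and the unique decomposition of Proposition~\ref{P-partition-expressions-prop}(ii), part (ii) via the finitely-generated graded $S$-module structure (the paper spells this out with a finite free resolution from Hilbert's Syzygy Theorem, which is what underlies your ``denominator divides $\prod_J(1-t\xx^J)$'' claim), and part (iii) as bookkeeping. You even go further than the paper by explicitly flagging the crux of (i): one must know $\xx^f \in \mm^{\nu(f)} \setminus \mm^{\nu(f)+1}$, which the paper's terse proof does not isolate.

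However, your proposed justification of that point has a gap. You claim that if $\xx^f = \xx^{g_1}\cdots\xx^{g_m}$ with each $g_i \neq 0$, then decomposing each $g_i$ into its connected ideals and pooling them yields a \emph{trivially-intersecting} multiset expressing $f$. This fails: take $P$ on $\{1,2,3\}$ with $1 <_P 2,3$, and $g_1 = \chi_{\{1,2\}}$, $g_2 = \chi_{\{1,3\}}$. Each $g_i$ is already a single connected ideal, yet $\{1,2\}$ and $\{1,3\}$ intersect nontrivially. So the pooled multiset need not be the one from Proposition~\ref{P-partition-expressions-prop}(ii), and you cannot invoke its uniqueness directly.

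The repair is a straightening step. Whenever two ideals $J, J'$ in the pooled multiset intersect nontrivially, replace the pair by $J \cup J'$ together with the connected components of $J \cap J'$; this preserves the sum $f$, keeps all ideals connected, and (since $J \cup J'$ strictly contains both $J$ and $J'$) strictly increases a potential such as $\sum_i 2^{|J_i|}$, so the process terminates at the unique trivially-intersecting decomposition of $f$. Each move replaces two ideals by $1 + t \geq 2$ ideals, so the count never decreases. Hence the terminal count $\nu(f)$ is at least the initial count $\sum_i \nu(g_i) \geq m$, giving $m \leq \nu(f)$ and thus $\xx^f \notin \mm^{\nu(f)+1}$ as required.
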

\begin{proof}
For assertion (i), note that $\overline{\xx}^J$ has $t$-degree $1$ 
and $\NN^n$-multidegree $\chi_J$ in $\gr(R_P)$.  This means
that if $f=\sum_{i=1}^{\nu(f)} \chi_{J_i}$ for connected order ideals
$J_i$, then $\overline{\xx}^f = \prod_{i=1}^{\nu(f)} \overline{\xx}^{J_i}$ 
will have $t$-degree $\nu(f)$ and $\NN^n$-multidegree $f$,
as desired.

For assertion\footnote{An alternate argument for assertion (ii) is
to apply \cite[Prop. 4.6.11]{Stanley-EC}.} (ii), note that $\gr(R_P)$ becomes
a finitely-generated $\NN \times \NN^n$-graded
$S$-module where $S=k[U_J]_{J \in \Jconn(P)}$.  It therefore
has an $\NN \times \NN^n$-graded free $S$-resolution,
$$
0 \rightarrow F_\ell \rightarrow \cdots \rightarrow 
F_1 \rightarrow F_0 \rightarrow \gr(R_P) \rightarrow 0,
$$
with $F_0=S$, and whose length $\ell$ is guaranteed to be at most
$|\Jconn(P)|$ by Hilbert's Syzygy Theorem.  Letting
$\beta_{i,(j,\alpha)}$ denote the number of $S$-basis elements
of the free $S$-module $F_i$ having
$\NN \times \NN^N$-multidegree $(j,\alpha)$,
then
$$
\begin{aligned}
\Hilb(R_P,t,\xx) 
&= \Hilb(S,t,\xx) \cdot
\sum_{i=0}^\ell (-1)^i \sum_{(j,\alpha) \in \NN \times \NN^n} 
                         \beta_{i,(j,\alpha)} t^j \xx^\alpha \\
&={\sum_{\substack{i=0,1,\ldots,\ell\\ (j,\alpha)  \in \NN \times \NN^n} } 
    \beta_{i,(j,\alpha)} (-1)^i t^j \xx^\alpha}  \Bigg/
   {\prod_{J \in \Jconn(P)} \left( 1-t \xx^J \right)}.
\end{aligned}
$$
Thus the numerator here is the polynomial $g(t,\xx)$.
\end{proof}

%%%%%%%%%%%%%%%%%%%%%%%%%%%%%%%%%%%%%%%%%%%%%%%%%%%%%%%%%%
\section{Presentations and 
proofs of Theorems~\ref{thm:minimal-presentation}
and \ref{thm:two-other-ideal-generators}}
\label{presentations-section}
%%%%%%%%%%%%%%%%%%%%%%%%%%%%%%%%%%%%%%%%%%%%%%%%%%%%%%%%%%

Here we analyze further the structure of the rings $R_P$ and $\gr(R_P)$, by means of 
the surjections $\varphi$ and $\gr(\varphi)$ from Corollary~\ref{cor:minimal-ring-generators}.

\begin{definition}
\label{ideal-definitions}
Define three ideals within the polynomial ring $S=k[U_J]_{J \in \Jconn(P)}$
each with generating sets indexed by the set $\Pi(P)$ that consists of
all pairs $\{J_1, J_2\}$ of connected order ideals in $P$ which intersect nontrivially:
$$
\begin{aligned}
I_P&:=( \syz_{J_1,J_2} )_{\{J_1, J_2\} \in \Pi(P)} \\
I^\gr_P&:= ( \syz^\gr_{J_1,J_2} )_{\{J_1, J_2\} \in \Pi(P)} \\
I^\init_P&:= ( \syz^\init_{J_1,J_2} )_{\{J_1, J_2\} \in \Pi(P)}
\end{aligned}
$$
where $\syz_{J_1,J_2}, \syz^\gr_{J_1,J_2}, \syz^\init_{J_1,J_2}$ 
were defined in \eqref{P-partition-syzygy}, \eqref{gr-syzygy}, 
and \eqref{monomial-syzygy} in the Introduction.
\end{definition}

We will see further (Proposition \ref{presentations-prop})
that $I_P$ and $I^\gr_P$ are the kernels of
the morphisms $\varphi$ and $\gr(\varphi)$, so that
$R_P \simeq S / I_P$ and $\gr(R_P) \simeq S / I^\gr_P$.
We first establish a link between these rings and $S/I^\init_P$.

\begin{proposition}
\label{shared-Hilbert-series-prop}
For any $P$ on $\{1,2,\ldots,n\}$,
the three rings 
$$
\begin{aligned}
& R_P \\
& \gr(R_P) \\ 
& S/I^\init_P
\end{aligned}
$$ 
share the same $\NN^n$-graded Hilbert series, namely $\sum_{f \in \AAA^\weak(P)} \xx^f$.
\end{proposition}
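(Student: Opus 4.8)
The plan is to show that all three rings have the same $\NN^n$-graded Hilbert series by comparing each of them, in turn, to the monomial ring $S/I^\init_P$ and to the known generating function $\sum_{f \in \AAA^\weak(P)} \xx^f$. The key observation is that $S/I^\init_P$ is a \emph{Stanley--Reisner-like} quotient whose standard monomials are easy to enumerate: since $I^\init_P$ is generated by the squarefree quadratic monomials $U_{J_1} U_{J_2}$ for $\{J_1,J_2\} \in \Pi(P)$ (that is, for pairs intersecting nontrivially), a monomial $\prod_i U_{J_i}$ survives in $S/I^\init_P$ precisely when no two of the (not necessarily distinct) factors $J_i$ intersect nontrivially, i.e.\ when $\{J_i\}$ is a multiset of connected order ideals that pairwise intersect trivially. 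By Proposition~\ref{P-partition-expressions-prop}(ii), such multisets are in $\NN^n$-degree-preserving bijection with weak $P$-partitions $f$ via $f = \sum_i \chi_{J_i}$. Hence the $\NN^n$-graded Hilbert series of $S/I^\init_P$ is exactly $\sum_{f \in \AAA^\weak(P)} \xx^f$.

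Next I would handle $R_P$ and $\gr(R_P)$. For $R_P$, Corollary~\ref{associated-graded-Hilbert-series-corollary}(iii) (or directly the definition) already gives $\Hilb(R_P,\xx) = \sum_{f \in \AAA^\weak(P)} \xx^f$, since $R_P$ is spanned $k$-linearly by the monomials $\xx^f$ for $f$ a weak $P$-partition, and these are linearly independent in $k[x_1,\dots,x_n]$. For $\gr(R_P)$: as noted in Section~\ref{associated-graded-section}, the $\mm$-adic filtration on $R_P$ is by $\NN^n$-homogeneous ideals, so passing to the associated graded ring does not change the $\NN^n$-graded Hilbert series; thus $\Hilb(\gr(R_P),\xx) = \Hilb(R_P,\xx) = \sum_{f \in \AAA^\weak(P)} \xx^f$ as well. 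This already shows all three series coincide.

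If one wishes to avoid even this much and argue more uniformly (as a warm-up to Proposition~\ref{presentations-prop}, which identifies the kernels), the alternative route is to note that $\syz^\init_{J_1,J_2} = U_{J_1}U_{J_2}$ is the leading term of $\syz_{J_1,J_2}$ (respectively $\syz^\gr_{J_1,J_2}$) under a suitable monomial order on $S$, so $I^\init_P \subseteq \init(I_P)$ and $I^\init_P \subseteq \init(I^\gr_P)$; since passing to an initial ideal preserves Hilbert series, one gets $\Hilb(S/I_P,\xx) = \Hilb(S/\init(I_P),\xx) \le \Hilb(S/I^\init_P,\xx)$ coefficientwise, with equality iff $I^\init_P = \init(I_P)$. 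Combined with the surjection $S/I_P \twoheadrightarrow R_P$ (so $\Hilb(S/I_P,\xx) \ge \Hilb(R_P,\xx)$) and the equality $\Hilb(R_P,\xx) = \Hilb(S/I^\init_P,\xx)$ from the first paragraph, all inequalities are forced to be equalities. The same sandwich works for $\gr(R_P)$.

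The main obstacle is the bijective counting step in the first paragraph: one must verify carefully that the standard monomials of $S/I^\init_P$ are precisely the multisets of connected order ideals pairwise intersecting trivially, and that the map sending such a multiset $\{J_i\}$ to $f = \sum_i \chi_{J_i}$ is a bijection onto $\AAA^\weak(P)$ that preserves $\NN^n$-degree. Injectivity and the degree-preservation are immediate, and surjectivity together with well-definedness is exactly the content of the unique-expression statement in Proposition~\ref{P-partition-expressions-prop}(ii); so really the work has already been done, and the proposition follows by assembling these pieces. Everything else is routine.
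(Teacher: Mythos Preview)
Your proof is correct and follows essentially the same approach as the paper: use the definition for $R_P$, the fact that the $\mm$-adic filtration is $\NN^n$-homogeneous for $\gr(R_P)$, and Proposition~\ref{P-partition-expressions-prop}(ii) to identify the standard monomials of $S/I^\init_P$ with weak $P$-partitions. Your optional ``sandwich'' paragraph is a preview of the argument the paper gives later in Theorem~\ref{presentations-prop}, but it is not needed here.
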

\begin{proof}
By definition, $R_P$ has this sum $\sum_f \xx^f$ as
its $\NN^n$-graded Hilbert series.  Setting $t=1$ in
Corollary~\ref{associated-graded-Hilbert-series-corollary}(i)
show that the same for $\gr(R_P)$.
Finally, Proposition \ref{P-partition-expressions-prop} part (ii)
implies that $S/I^\init_P$ 
also has this same generating function 
as its $\NN^n$-graded Hilbert series, since the monomials 
surviving in the quotient $S/I^\init_P$ correspond to multisets of
nonempty connected order ideals that pairwise intersect trivially.
\end{proof}

The relation between the monomial quotient $S/I^\init_P$ and the rings $R_P$ and $\gr(R_P)$
is in fact deeper than an equality of Hilbert series.
Indeed, it fits into the theory of
Gr\"obner bases (see, e.g., Sturmfels \cite[Chapter 1]{Sturmfels}).
Recall that a {\it monomial ordering} on $S$ is a total ordering $\preceq$ on the set of 
all monomials $U^A$ in $S$ with these properties:
\begin{enumerate}
\item[(a)] $\preceq$ has no infinite descending
chains, 
\item[(b)] the monomial $1=U^0$ is the smallest element for $\preceq$, and
\item[(c)] for any monomials $U^A, U^B, U^C$, 
$$
U^A \preceq U^B \text{ implies }
U^A U^C \preceq U^B U^C.
$$
\end{enumerate}
Having fixed a  monomial ordering $\preceq$,
given a polynomial $f$ in $S$, its {\it initial term} $\init_\preceq(f)$
is its monomial with nonzero coefficient
which is highest in the $\preceq$ order.
Given an ideal $I \subset S$, its {\it initial ideal} is the monomial
ideal $\init_\preceq(I):=(\init_\preceq(f) )_{f \in I}$.

Given a poset $P$, we define a total 
ordering $\preceq$ on the monomials in $S$ as follows.
First choose a total order $\preceq$ on order ideals of $P$ 
such that $|J| < |K|$ implies $J \prec K$.
Then when comparing two distinct monomials 
$$
\begin{array}{rcccl}
\UU_J&=&U_{J_1} U_{J_2} \cdots U_{J_r} 
&\text{ with }&J_1 \preceq J_2 \preceq \dots \preceq J_r, \\
\UU_K&=& U_{K_1} U_{K_2} \cdots U_{K_s}
&\text{ with }&K_1 \preceq K_2 \preceq \dots \preceq K_s,
\end{array}
$$
assume without loss of generality that $r \leq s$.
Find the smallest $i$ in $\{1,2,\ldots, r\}$
for which $J_i \neq K_i$; if no such $i$ exists, so 
$\UU_J$ strictly divides $\UU_K$,
say that $\UU_J \prec \UU_K$.
Otherwise, if $J_i \prec K_i$
say that $\UU_J \prec \UU_K$,
and if $K_i \prec J_i$ say that $\UU_K \prec \UU_J$.
It is not hard to see that such a linear order
$\preceq$ will satisfy the above properties (a),(b),(c) that
define a monomial ordering.

Theorems~\ref{thm:minimal-presentation} and 
\ref{thm:two-other-ideal-generators} amount to the following result.

\begin{theorem}
\label{presentations-prop}
For a poset $P$ on $\{1,2,\ldots,n\}$, one has these ideal equalities:
$$
\begin{aligned}
I_P&=\ker\left(\varphi:S \longrightarrow R_P\right) \\
I^\gr_P&= \ker\left(\gr(\varphi):S \longrightarrow \gr(R_P)\right)\\
I^\init_P& = \init_\preceq(I_P) =\init_\preceq(I^\gr_P)
\end{aligned}
$$
where $\preceq$ is a monomial order on $S$ defined
as above.
\end{theorem}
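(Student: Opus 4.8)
I would prove the three ideal equalities in Theorem~\ref{presentations-prop} in the following order: first show $I_P \subseteq \ker(\varphi)$ and $I^\gr_P \subseteq \ker(\gr(\varphi))$ by direct verification that each binomial $\syz_{J_1,J_2}$ (resp. $\syz^\gr_{J_1,J_2}$) maps to zero; second, establish $I^\init_P \subseteq \init_\preceq(I_P)$ and $I^\init_P \subseteq \init_\preceq(I^\gr_P)$ by checking that for a pair $\{J_1,J_2\}\in\Pi(P)$ the monomial $\syz^\init_{J_1,J_2}=U_{J_1}U_{J_2}$ is the $\preceq$-initial term of $\syz_{J_1,J_2}$ and of $\syz^\gr_{J_1,J_2}$; third, and this is the crux, prove the reverse containments all at once by a Hilbert-series squeeze.

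For the verification step, note $\varphi(\syz_{J_1,J_2})=\xx^{J_1}\xx^{J_2}-\xx^{J_1\cup J_2}\xx^{J^{(1)}}\cdots\xx^{J^{(t)}}$, and since $J_1\cup J_2$ and $J_1\cap J_2=J^{(1)}\sqcup\cdots\sqcup J^{(t)}$ partition the multiset union $J_1\uplus J_2$ as multisets of elements of $\{1,\dots,n\}$, this is zero in $k[x_1,\dots,x_n]$; the computation for $\gr(\varphi)$ is the same when $J_1\cap J_2$ is connected, while if $J_1\cap J_2$ is disconnected one observes that $\overline{\xx}^{J_1}\cdot\overline{\xx}^{J_2}$ lands in $\mm^3/\mm^3\cdots$ — more precisely, that $\xx^{J_1}\xx^{J_2}=\xx^{J_1\cup J_2}\prod_i\xx^{J^{(i)}}$ has $\nu$-value $1+t\geq 3$ so its image in $\mm^2/\mm^3$ vanishes, giving $\gr(\varphi)(U_{J_1}U_{J_2})=0$. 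For the initial-term claim: since $|J_1\cup J_2|>|J_1|,|J_2|$ (because the intersection is nonempty and proper), each $J^{(i)}$ has size strictly less than $|J_1|$ and $|J_2|$, and by the design of $\preceq$ (first sort by cardinality) the monomial $U_{J_1\cup J_2}\cdot U_{J^{(1)}}\cdots U_{J^{(t)}}$ compares smaller than $U_{J_1}U_{J_2}$ — one checks the first index where the two sorted sequences differ. So $\init_\preceq(\syz_{J_1,J_2})=U_{J_1}U_{J_2}$, and likewise for $\syz^\gr_{J_1,J_2}$ in both cases of \eqref{gr-syzygy}.

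Now the squeeze. From the containments just established one has, for each of the ideals $I=I_P$ and $I=I^\gr_P$, a chain of surjections of $\NN^n$-graded vector spaces
$$
S/I^\init_P \twoheadleftarrow S/\init_\preceq(I) \twoheadrightarrow^{\!\!\!\text{(std. Gr\"obner fact)}} S/I \cong R_P \text{ or } \gr(R_P),
$$
where the middle-to-right "isomorphism of Hilbert series" is the standard fact that $S/I$ and $S/\init_\preceq(I)$ share the same Hilbert function. By Proposition~\ref{shared-Hilbert-series-prop}, the three rings $R_P$, $\gr(R_P)$, $S/I^\init_P$ all have the same $\NN^n$-graded Hilbert series $\sum_{f}\xx^f$. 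Therefore in the chain $\Hilb(S/I^\init_P)=\Hilb(S/\init_\preceq(I))$ forces the first surjection to be an isomorphism, hence $I^\init_P=\init_\preceq(I)$; and $\Hilb(S/I)=\Hilb(S/\ker(\varphi\text{ or }\gr\varphi))$ together with $I\subseteq\ker$ forces $I=\ker$. This simultaneously yields $I_P=\ker(\varphi)$, $I^\gr_P=\ker(\gr(\varphi))$, and $I^\init_P=\init_\preceq(I_P)=\init_\preceq(I^\gr_P)$.

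**Main obstacle.** The only genuinely delicate point is confirming that the ad hoc order $\preceq$ defined on monomials of $S$ really is a monomial order (properties (a),(b),(c)) and that with it $U_{J_1}U_{J_2}$ beats the other term of each syzygy; the paper already asserts "it is not hard to see" for the monomial-order axioms, so I would just verify (c) carefully (multiplying both sides by a common $U^C$ and re-sorting) and then check the initial-term comparison using the cardinality-first tie-break, which is where the hypothesis $|J_1\cup J_2|>\max(|J_1|,|J_2|)$ — valid exactly because $\{J_1,J_2\}$ intersect nontrivially — does the work. Everything else is bookkeeping with the multiset identity $J_1\uplus J_2 = (J_1\cup J_2)\uplus(J_1\cap J_2)$ and the Hilbert-series squeeze, both routine.
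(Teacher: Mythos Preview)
Your proposal is correct and follows essentially the same strategy as the paper's own proof: verify the containments $I_P\subseteq\ker(\varphi)$, $I^\gr_P\subseteq\ker(\gr(\varphi))$, and $I^\init_P\subseteq\init_\preceq(I_P),\,\init_\preceq(I^\gr_P)$ directly, then use Proposition~\ref{shared-Hilbert-series-prop} together with the standard fact $\Hilb(S/I)=\Hilb(S/\init_\preceq I)$ to squeeze all containments into equalities. One small cosmetic issue: in your displayed chain the leftmost arrow should point rightward (since $I^\init_P\subseteq\init_\preceq(I)$ gives $S/I^\init_P\twoheadrightarrow S/\init_\preceq(I)$), and the ``$\cong$'' is what is being proved rather than assumed---but your prose explanation of the squeeze is correct and matches the paper.
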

The first equality asserts that
$I_P$ is the toric ideal for the ring $R_P$
with respect to its minimal generating set, in the terminology
of Sturmfels \cite{Sturmfels}. 
\begin{proof}
Temporarily denote by $K, K^\gr$ the 
kernels appearing on the right sides in the theorem:
$$
\begin{aligned}
K &:=\ker( \varphi: S \longrightarrow R_P ); \\
K^\gr &:=\ker( \gr(\varphi): S \longrightarrow \gr(R_P) ).
\end{aligned}
$$
One can check from the generators of $I_P$ and $I^\gr_P$ 
given in Definitions~\ref{ideal-definitions}
that $I_P \subseteq K$ and $I^\gr_P \subseteq K^\gr$.
Hence one has inclusions
$$
\begin{aligned}
\init_\preceq(I_P) &\subseteq \init_\preceq(K) \\
\init_\preceq(I^\gr_P) &\subseteq \init_\preceq(K^\gr).
\end{aligned}
$$
On the other hand, since 
$$
\syz^\init_{J_1,J_2}
=U_{J_1} U_{J_2} 
=\init_\preceq(\syz_{J_1,J_2}) 
=\init_\preceq(\syz^\gr_{J_1,J_2})
$$
one concludes that
$$
I^\init_P \subseteq \init_\preceq(I_P), 
\init_\preceq(I^\gr_P).
$$
These various ideal inclusions lead to towers of surjections
\begin{equation}
\label{towers-of-surjections}
\begin{array}{rcl}
S/I^\init_P 
& \twoheadrightarrow 
S/\init_\preceq(I_P) 
& \twoheadrightarrow 
S/\init_\preceq(K) 
\\
S/I^\init_P 
& \twoheadrightarrow 
S/\init_\preceq(I^\gr_P) 
& \twoheadrightarrow 
S/\init_\preceq(K^\gr) 
\end{array}
\end{equation}
\noindent
Recall that for any homogeneous ideal $I$ of $S$ and any
monomial ordering $\preceq$, the initial ideal $\init_\preceq(I)$
has the property that $S/I$ and $S/\init_\preceq(I)$ share the
same Hilbert series.  Together with 
Proposition~\ref{shared-Hilbert-series-prop} this shows 
all these quotient rings
$$
\begin{array}{rll}
&S/K & \left( \cong R_P \right) \\
&S/K^\gr &\left( \cong \gr(R_P) \right) \\
&S/I^\init_P\\
&S/\init_\preceq(K) \\
&S/\init_\preceq(K^\gr) \\
\end{array}
$$
share the same $\NN^n$-multigraded Hilbert series.
One concludes that all of the surjections in the towers \eqref{towers-of-surjections} 
are isomorphisms.  Thus
$$
\begin{array}{rll}
I^\init_P &= \init_\preceq(I_P) &= \init_\preceq(K)\\
I^\init_P &= \init_\preceq(I^\gr_P) &= \init_\preceq(K^\gr).\\
\end{array}
$$
and the generators for $I_P, I^\gr_P$ given in their definitions form
Gr\"obner bases with respect to $\preceq$ for the ideals $K, K^\gr$.
This implies $I_P=K$ and $I^\gr_P=K^\gr$.
\end{proof}

\begin{proposition}
\label{toric-minimal-generators-prop}
Each of the three ideals $I_P, I^\gr_P$ and $I^\init_P$
is generated minimally by the generating sets appearing
in Definition~\ref{ideal-definitions} indexed by $\Pi(P)$.
\end{proposition}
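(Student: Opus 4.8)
The goal is to show that none of the three ideals $I_P$, $I^\gr_P$, $I^\init_P$ admits a smaller generating set than the one indexed by $\Pi(P)$. Since all three are homogeneous with respect to the $\NN^n$-multigrading (and also the $t$-grading), minimality of a generating set can be detected by looking at the minimal free resolution, or more simply, by checking that for each pair $\{J_1,J_2\}\in\Pi(P)$ the generator $\syz^{?}_{J_1,J_2}$ is \emph{not} in the subideal generated by all the other listed generators. The cleanest route is to focus first on the monomial ideal $I^\init_P$, for which minimality is completely transparent, and then transfer the conclusion to $I_P$ and $I^\gr_P$ using Theorem~\ref{presentations-prop}.

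First I would handle $I^\init_P$. Its listed generators are the squarefree quadratic monomials $\syz^\init_{J_1,J_2}=U_{J_1}U_{J_2}$ for $\{J_1,J_2\}\in\Pi(P)$. A monomial ideal is minimally generated (uniquely) by its minimal monomials under divisibility, so I must check that no $U_{J_1}U_{J_2}$ with $\{J_1,J_2\}\in\Pi(P)$ is divisible by another $U_{K_1}U_{K_2}$ with $\{K_1,K_2\}\in\Pi(P)$ and $\{K_1,K_2\}\neq\{J_1,J_2\}$. Divisibility of one squarefree quadratic by another of the same degree forces equality as multisets of variables, hence $\{K_1,K_2\}=\{J_1,J_2\}$; the only subtlety is the possibility that $J_1=J_2$, but a pair in $\Pi(P)$ consists of two connected ideals that intersect \emph{nontrivially}, so in particular they are distinct (an ideal intersects itself trivially). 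Thus all the $U_{J_1}U_{J_2}$ are pairwise non-dividing, and $I^\init_P$ is minimally generated by exactly this set, of size $|\Pi(P)|$.

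Next I would transfer this to $I_P$ and $I^\gr_P$. By Theorem~\ref{presentations-prop}, $\init_\preceq(I_P)=\init_\preceq(I^\gr_P)=I^\init_P$, and the given generating sets are Gr\"obner bases. The standard fact here is: if $G=\{g_\alpha\}$ is a set of homogeneous generators of a homogeneous ideal $I$ whose initial terms $\init_\preceq(g_\alpha)$ minimally generate $\init_\preceq(I)$, then $G$ itself minimally generates $I$. Indeed, the number of minimal generators of a homogeneous ideal $I$ in each multidegree $\alpha$ is $\dim_k (I/\mathfrak n I)_\alpha$ where $\mathfrak n=(U_J)$, and passing to the initial ideal can only preserve or \emph{increase} these Betti numbers $\beta_{1,\alpha}$ (the minimal free resolution of $S/\init_\preceq(I)$ specializes from that of $S/I$ by upper-semicontinuity); since $\{\init_\preceq(g_\alpha)\}$ already realizes the minimal number of generators of $\init_\preceq(I)$ in each degree, and $|G|$ equals that number, $G$ cannot be shrunk. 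Applying this with $I=I_P$ (resp.\ $I^\gr_P$) and $G$ the generating set from Definition~\ref{ideal-definitions}, whose initial terms are exactly the $\syz^\init_{J_1,J_2}$ by the computation already recorded in the proof of Theorem~\ref{presentations-prop}, gives minimality for all three ideals.

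**Main obstacle.** The routine part is the monomial-ideal bookkeeping; the only real care needed is the semicontinuity input — i.e., that $\beta_{1,\alpha}(S/\init_\preceq(I))\geq\beta_{1,\alpha}(S/I)$, equivalently that a minimal generating set of $I$ maps under $\init_\preceq$ to a generating set of $\init_\preceq(I)$ of the same size, which is automatic once we know $G$ is a Gr\"obner basis (so $\{\init_\preceq(g)\}$ generates $\init_\preceq(I)$) and then invoke that the minimal number of generators of $\init_\preceq(I)$ is at most $|G|$ while also at least $|\Pi(P)|$ by the first step. One could alternatively avoid semicontinuity entirely and argue directly: if some $\syz_{J_1,J_2}$ were an $S_+$-combination of the others, taking initial terms would exhibit $U_{J_1}U_{J_2}$ as lying in the ideal generated by the other $\syz^\init_{K_1,K_2}$'s (using that the $\syz_{K_1,K_2}$ form a Gr\"obner basis, so initial terms of elements of the subideal lie in the subideal's initial ideal), contradicting the minimality of $I^\init_P$ established in step one. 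This direct version is cleaner to write and is the one I would include.
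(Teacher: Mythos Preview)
Your approach via the initial ideal is different from the paper's direct argument, and it has a genuine gap.

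First, the semicontinuity step does not prove what you want. Upper semicontinuity of graded Betti numbers says $\beta_{1,\alpha}(S/I)\le\beta_{1,\alpha}(S/\init_\preceq I)$, so summing gives $(\text{min.\ gens of }I)\le(\text{min.\ gens of }\init_\preceq I)=|\Pi(P)|$. That is the trivial direction (you already have a generating set of size $|\Pi(P)|$); it does \emph{not} force equality. The general statement ``if $G$ is a homogeneous generating set of $I$ whose initial terms minimally generate $\init_\preceq(I)$, then $G$ minimally generates $I$'' is false. For instance, in $k[x,y]$ with lex order $x>y$, take $I=(x^2-y^2,\,xy)$ and $G=\{x^2-y^2,\,xy,\,y^3\}$: this is a Gr\"obner basis whose initial terms $\{x^2,xy,y^3\}$ minimally generate $\init_\preceq(I)$, yet $y^3=x\cdot xy-y(x^2-y^2)$ is redundant in $G$.

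Second, your ``direct'' alternative also breaks. You write that if $\syz_{J_1,J_2}$ lies in the subideal $I'$ generated by the other $\syz_{K_1,K_2}$, then $U_{J_1}U_{J_2}=\init_\preceq(\syz_{J_1,J_2})$ lies in $(\syz^\init_{K_1,K_2}:\{K_1,K_2\}\neq\{J_1,J_2\})$. But that would need $\{\syz_{K_1,K_2}:\{K_1,K_2\}\neq\{J_1,J_2\}\}$ to be a Gr\"obner basis for $I'$, which you have not shown; if $\syz_{J_1,J_2}$ is redundant then $I'=I_P$, and removing one element from a Gr\"obner basis of $I_P$ need not leave a Gr\"obner basis. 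The parenthetical ``the $\syz_{K_1,K_2}$ form a Gr\"obner basis'' refers to the full set, not to the truncated one, so it does not justify the inclusion. Note also that you cannot rescue the argument by separating multidegrees: distinct pairs $\{J_1,J_2\}\in\Pi(P)$ can share the same $\NN^n$-degree $\chi_{J_1}+\chi_{J_2}$ (e.g.\ in the poset $1<2,3,4,5$ the pairs $\{\{1,2,3\},\{1,4,5\}\}$, $\{\{1,2,4\},\{1,3,5\}\}$, $\{\{1,2,5\},\{1,3,4\}\}$ all have multidegree $(2,1,1,1,1)$).

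The paper's proof avoids all of this by arguing directly at the level of monomials: assuming $\syz_{J_1,J_2}=\sum_{\{K_1,K_2\}\neq\{J_1,J_2\}}f_{K_1,K_2}\,\syz_{K_1,K_2}$, the monomial $U_{J_1}U_{J_2}$ must occur with nonzero coefficient in some single product $f_{K_1,K_2}\,\syz_{K_1,K_2}$, hence some monomial of $\syz_{K_1,K_2}$ divides $U_{J_1}U_{J_2}$. Since $U_{J_1}U_{J_2}$ is quadratic in the $U$'s, this forces either $\{K_1,K_2\}=\{J_1,J_2\}$ or ($K_1\cap K_2$ connected and $\{K_1\cup K_2,\,K_1\cap K_2\}=\{J_1,J_2\}$), the latter contradicting that $J_1,J_2$ are not nested. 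This elementary divisibility check is what your argument is missing.
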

\begin{proof}
We give the argument by contradiction for why the
generator 
$$
\syz_{J_1,J_2}=U_{J_1} U_{J_2} - U_{J_1 \cup J_2} \prod_{i=1}^t U_{J^{(i)}}
$$
of $I_P$ cannot be redundant;  the arguments for
$I^\gr_P$ and $I^\init_P$ are similar and even easier.
  If $\syz_{J_1,J_2}$ were redundant, then it could be expressed as a sum 
$$
\syz_{J_1,J_2} =
\sum_{\substack{ \{K_{1},K_{2}\}\  \in \  \Pi_{P} \\
           \{K_{1},K_{2}\} \neq \{J_{1},J_{2}\} } } 
      f_{K_{1},K_{2}} \cdot \syz_{K_{1},K_{2}}
$$
where the $f_{K_{1},K_{2}}$ are some polynomials in the
variables $U_J$ of $S$.  Since the monomial $U_{J_1} U_{J_2}$ appears
on the left, it must appear in the right, say in the term 
$f_{K_{1},K_{2}} \cdot \syz_{K_{1},K_{2}}$, forcing
one of the two monomials $U_{K_1} U_{K_2}$ or 
$U_{K_1 \cup K_2} \prod_{i=1}^m U_{K^{(i)}}$ 
in $\syz_{K_{1},K_{2}}$ to divide $U_{J_1} U_{J_2}$.
Since $U_{J_1} U_{J_2}$ is quadratic, this forces
either the equality of sets 
\begin{enumerate}
\item[$\bullet$]
$\{J_1,J_2\}=\{ K_1, K_2\}$, a contradiction, or 
\item[$\bullet$]
$m=1$ (that is, $K_{1} \cap K_{2}=K^{(1)}$ is connected) 
and $\{K_{1} \cup K_{2}, K_{1} \cap K_{2} \} = \{J_{1},J_{2}\}$.
This is again a contradiction because $J_{1}$ and $J_{2}$ 
have non-trivial intersection, that is, neither one is included
in the other.\qedhere
\end{enumerate}
\end{proof}

We close this section by discussing the
situation when $\gr(R_P) \cong R_P$.

\begin{corollary}
\label{gradedness-characterization}
The following are equivalent for a poset 
$P$ on $\{1,2,\ldots,n\}$:
\begin{enumerate}
\item[(i)]
One has $I^\gr_P=I_P$ and $\gr(R_P) \cong R_P$.
\item[(ii)]
The toric ideal $I_P=\ker(S \overset{\varphi}{\rightarrow} R_P)$ 
is homogeneous for the standard $\NN$-grading on $S$ in which
each $U_J$ has degree one.
\item[(iii)]
Every pair $\{J_1,J_2\}$ of connected order ideals that
intersects nontrivially has $J_1 \cap J_2$ connected.
\end{enumerate}
\end{corollary}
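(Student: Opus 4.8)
The plan is to prove the cycle of implications (i) $\Rightarrow$ (ii) $\Rightarrow$ (iii) $\Rightarrow$ (i), exploiting the explicit presentations from Theorem~\ref{presentations-prop} and the binomial generators from \eqref{P-partition-syzygy} and \eqref{gr-syzygy}. The implication (i) $\Rightarrow$ (ii) is essentially immediate: if $\gr(R_P) \cong R_P$ as graded rings and $I^\gr_P = I_P$, then since $I^\gr_P$ is by construction homogeneous for the $t$-grading (its generators $\syz^\gr_{J_1,J_2}$ are quadratic, being either a difference of two quadratic monomials or a single quadratic monomial), the equality $I_P = I^\gr_P$ forces $I_P$ itself to be homogeneous for the standard $\NN$-grading where each $U_J$ has degree one. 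For (ii) $\Rightarrow$ (iii), I would argue contrapositively: suppose some pair $\{J_1,J_2\}$ in $\Pi(P)$ has $J_1 \cap J_2$ disconnected, with connected components $J^{(1)} \sqcup \cdots \sqcup J^{(t)}$ where $t \geq 2$. Then the generator $\syz_{J_1,J_2} = U_{J_1} U_{J_2} - U_{J_1 \cup J_2} U_{J^{(1)}} \cdots U_{J^{(t)}}$ lies in $I_P$; its first monomial has $t$-degree $2$ while its second has $t$-degree $1 + t \geq 3$, so it is not homogeneous. The one subtlety is to check that $I_P$ is genuinely inhomogeneous and not merely that this particular generator is — but this follows from Proposition~\ref{toric-minimal-generators-prop}, which says $\syz_{J_1,J_2}$ is a \emph{minimal} generator, hence not expressible in terms of the other (possibly homogeneous) generators; alternatively one notes directly that the difference of the two monomials survives modulo the linear span of homogeneous elements.

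For the implication (iii) $\Rightarrow$ (i), assume every nontrivially intersecting pair $\{J_1,J_2\}$ of connected order ideals has $J_1 \cap J_2$ connected. Then in \eqref{P-partition-syzygy} one always has $t = 1$, so $\syz_{J_1,J_2} = U_{J_1} U_{J_2} - U_{J_1 \cup J_2} U_{J_1 \cap J_2}$, and comparing with \eqref{gr-syzygy} (the first case, since $J_1 \cap J_2$ is connected) we see $\syz_{J_1,J_2} = \syz^\gr_{J_1,J_2}$ for every pair in $\Pi(P)$. Hence $I_P = I^\gr_P$ as ideals of $S$. By Theorem~\ref{presentations-prop} this gives $R_P \cong S/I_P = S/I^\gr_P \cong \gr(R_P)$, establishing (i). One should remark that this isomorphism is compatible with the gradings in the expected way, but that is automatic once the defining ideals coincide.

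I expect the only mild obstacle is the bookkeeping in (ii) $\Rightarrow$ (iii): making precise that a single inhomogeneous minimal generator really obstructs homogeneity of the whole ideal $I_P$. The cleanest route is to invoke Proposition~\ref{toric-minimal-generators-prop} directly — if $I_P$ were homogeneous, it would be generated by its homogeneous pieces, and since the minimal generating set is unique up to scalars and given explicitly by the $\syz_{J_1,J_2}$, each such generator would have to be homogeneous, contradicting the $t$-degree computation above whenever some $J_1 \cap J_2$ is disconnected. Everything else is a direct unpacking of the definitions and the presentations already proved in Theorem~\ref{presentations-prop}, so the corollary follows with little additional work.
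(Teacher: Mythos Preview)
Your proof is correct and follows essentially the same route as the paper, which also invokes Proposition~\ref{toric-minimal-generators-prop} for the equivalence of (ii) and (iii) and simply declares (i)~$\Leftrightarrow$~(ii) ``easy and well-known'' rather than unpacking it as you do. One small imprecision: minimal generating sets of a graded ideal are not literally unique up to scalars, but your alternative observation---that if $I_P$ were $t$-homogeneous then the degree-$2$ component $U_{J_1}U_{J_2}$ of $\syz_{J_1,J_2}$ would itself lie in $I_P$, contradicting $\varphi(U_{J_1}U_{J_2})\neq 0$---is the clean way to close that step.
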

\begin{proof}
The equivalence of (i) and (ii) is easy and well-known.
For the equivalence of (ii) and (iii), note
that the minimal generator $\syz_{J_1,J_2}$
is homogeneous if and only if $t=1$, that is, if
and only if $J_1 \cap J_2$ is connected.  Now apply 
Proposition~\ref{toric-minimal-generators-prop}.
\end{proof}

An important special case of this situation where $\gr(R_P) \cong R_P$
was studied by Hibi \cite{Hibi}, namely when {\it every} nonempty order ideal
is connected.  We leave the straightforward proof of the following proposition
to the reader.

\begin{proposition}
A finite poset $P$ has every nonempty order ideal connected 
if and only if $P$ has a minimum element $\hat{0}$.  Furthermore,
in this case,
\begin{enumerate}
\item[$\bullet$]
the two decompositions in
Proposition~\ref{P-partition-expressions-prop} (ii) and (iii) coincide,
\item[$\bullet$]
the statistic $\nu(f)$ on $P$-partitions $f$ equals the maximum value 
$\max(f)$, 
\item[$\bullet$]
the statistic $\des_P(w)$ on linear extensions $w$ in $\LLL(P)$
is independent of the poset structure $P$,
and equals the {\it descent number} $\des(w):=|\Des(w)|$,
\item[$\bullet$]
the ring $R_P \cong \gr(R_P)$ is the same
as the Hibi ring introduced in \cite{Hibi}, 
but associated with the poset $P \setminus \hat{0}$.
In other words, 
$$
\begin{aligned}
R_P & \cong \gr(R_P) \\
    & \cong k[y_J]_{J \in \JJJ(P \setminus \hat{0})} \,\,\  / \,\,\,  
\left( \,\,\, y_{J_1}\cdot y_{J_2} - y_{J_1 \cup J_2} \cdot y_{J_1 \cap J_2} \,\,\, \right)_{J_1, J_2 \in \JJJ(P)}.
\end{aligned}
$$
\end{enumerate}
\end{proposition}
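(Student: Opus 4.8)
The plan is to settle the ``if and only if'' first and then verify the four bulleted consequences in turn, all by elementary arguments.

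For the equivalence: if $P$ has a minimum element $\hat 0$, then every nonempty order ideal $J$ contains $\hat 0$, and each $p \in J$ is joined to $\hat 0$ by a saturated chain in $P$ all of whose elements lie below $p$, hence in $J$; so the Hasse diagram of $J$ is connected. Conversely, I would prove the contrapositive: if $P$ has no minimum element, then (using finiteness of $P$) it has at least two distinct minimal elements $m_1 \neq m_2$ --- a finite poset with a unique minimal element has that element as its minimum, as one sees by passing to a minimal element of $P_{\leq x}$ for each $x$. Then $\{m_1, m_2\}$ is a nonempty order ideal whose Hasse diagram consists of two isolated vertices, hence is disconnected.

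Now assume $P$ has minimum $\hat 0$, so that $\Jconn(P) = \JJJ(P) \setminus \{\varnothing\}$; i.e.\ \emph{every} nonempty order ideal is connected. In Proposition~\ref{P-partition-expressions-prop}, the ideals $I_k = f^{-1}(\{k, k+1, \ldots\})$ for $1 \le k \le \max(f)$ are nonempty and therefore already connected, so decomposing them into connected components does nothing; hence the multiset in part~(ii) agrees with the multichain in part~(i), and in particular $\nu(f) = \max(f)$. Similarly, for $w \in \LLL(P)$ and $1 \le i \le n$ the initial segment $w|_{[1,i]}$ is a nonempty order ideal of $P$, hence connected, so $c_P(w|_{[1,i]}) = 1$; summing over $i \in \Des(w)$ yields $\des_P(w) = |\Des(w)| = \des(w)$.

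For the ring statement, $R_P \cong \gr(R_P)$ follows from Corollary~\ref{gradedness-characterization}: any two nonempty order ideals $J_1, J_2$ both contain $\hat 0$, so $J_1 \cap J_2$ is nonempty, hence connected, which is exactly condition~(iii) there. To recognize the common ring as a Hibi ring, I would use the map $J \mapsto J \cup \{\hat 0\}$, a lattice isomorphism from $\JJJ(P \setminus \hat 0)$ onto $\Jconn(P) = \JJJ(P) \setminus \{\varnothing\}$ that carries joins to unions and meets to intersections (with $\varnothing$ and $P \setminus \hat 0$ going to $\{\hat 0\} = P_{\le \hat 0}$ and to $P$). Under this identification, Proposition~\ref{presentations-prop} presents $R_P$ as $k[U_J]_{J \in \Jconn(P)}$ modulo the binomials $\syz_{J_1, J_2} = U_{J_1} U_{J_2} - U_{J_1 \cup J_2} U_{J_1 \cap J_2}$ --- the product over the components of $J_1 \cap J_2$ collapsing to a single factor precisely because $J_1 \cap J_2$ is connected --- as $\{J_1, J_2\}$ runs over $\Pi(P)$. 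The only step demanding care is matching this with Hibi's presentation, whose relations are indexed by \emph{all} pairs of order ideals of $P \setminus \hat 0$: one must check that the relations coming from pairs $\{J_1, J_2\} \notin \Pi(P)$ are redundant, which holds because two nonempty order ideals that intersect trivially can only be nested here (never disjoint, as both contain $\hat 0$), so their relation $U_{J_1}U_{J_2} - U_{J_1 \cup J_2}U_{J_1 \cap J_2}$ is the tautology $U_{J_1}U_{J_2} - U_{J_2}U_{J_1} = 0$. Hence the presentation of $R_P$ coincides with Hibi's presentation of the Hibi ring of $P \setminus \hat 0$, as claimed; this bookkeeping with the Hibi-ring conventions is the main (and only mild) obstacle.
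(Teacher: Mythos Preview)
The paper does not actually prove this proposition---it explicitly leaves the ``straightforward proof'' to the reader. Your argument is correct and is precisely the kind of direct verification the authors had in mind: the equivalence via counting minimal elements, the collapse of the two decompositions of Proposition~\ref{P-partition-expressions-prop} when every nonempty ideal is already connected, the reduction of each $c_P(w|_{[1,i]})$ to $1$, and the identification with Hibi's presentation via the bijection $J \mapsto J \cup \{\hat 0\}$ together with Corollary~\ref{gradedness-characterization}. Your observation that the extra Hibi relations (for nested pairs) are tautological is exactly the bookkeeping needed to match the two presentations.
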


%%%%%%%%%%%%%%%%%%%%%%%%%%%%%%%%%%%%%%%%%%%%%%%%%%%%%%%%%%
\section{Second proof of Theorem~\ref{thm:duplicated-forest-extensions}: 
complete intersections}
\label{second-forest-proof-section}
%%%%%%%%%%%%%%%%%%%%%%%%%%%%%%%%%%%%%%%%%%%%%%%%%%%%%%%%%%

We give here a second proof, via our ring presentations,
of the precursor Theorem~\ref{thm:essence-of-factorization},
rather than Theorem~\ref{thm:duplicated-forest-extensions} 
itself.\medskip

This proof uses some basic notions of commutative algebra that we shall 
recall here: we refer to Stanley \cite[\S I.5]{Stanley-CCA} for
more details on this subject.

The Krull dimension $\Kdim(A)$ of a finitely generated commutative
$k$-algebra $A$ is the maximum cardinality $d$ of a 
subset $\{ \theta_1,\ldots,\theta_d \}$
in $A$ which are algebraically independent over $k$.
If $A$ is $\NN$-graded, then the Krull dimension 
coincides with the multiplicity
of the pole $z=1$ in the Hilbert series $\Hilb(A,z)$.
In particular, when several algebras share the same Hilbert series,
they also share the same Krull dimension.

Let $\theta_1,\dots,\theta_\ell$ be homogeneous elements in a graded 
$k$-algebra $A$.
Then one has the inequality
\begin{equation}
\label{Hauptidealsatz}
\Kdim \big( A / (\theta_1 A + \dots + \theta_\ell A) \big) 
\geq \Kdim A - \ell.
\end{equation}
If $A$ is \emph{Cohen-Macaulay} (which is for example the case of
a polynomial algebra over a field), then equality in
\eqref{Hauptidealsatz}
occurs if and
only if for each $i=1,2,\ldots,\ell$ one has
that $\theta_i$ is a non-zero-divisor in the quotient
$A / (\theta_1 A + \dots + \theta_{i-1} A).$
Such a sequence $(\theta_1,\ldots,\theta_\ell)$
is called an {\it $A$-regular sequence}.

We now have all the necessary tools to give our second proof
of Theorem~\ref{thm:essence-of-factorization}.

\begin{proof}[Proof of 
Theorem~\ref{thm:essence-of-factorization}]
For any poset $P$ on $\{1,2,\ldots,n\}$,
the affine semigroup ring $R_P$ of $P$-partitions
has Krull dimension $n$, since the cone of $P$-partitions
is $n$-dimensional.  But then $\gr(R_P)$ and $S/I^\init_P$
also have Krull dimension $n$, since  
Proposition~\ref{shared-Hilbert-series-prop}
asserts that they have the same $\NN^n$-graded Hilbert series.

Now the presentation for any of the three rings
$R_P, \gr(R_P), S/I^\init_P$
in Theorems~\ref{thm:minimal-presentation} and
\ref{thm:two-other-ideal-generators} exhibits them
as quotients of $S=k[U_J]_{J \in \Jconn(P)}$, which
has Krull dimension $|\Jconn(P)|$,
by an ideal ($I_P, I^\gr_P$ or $I^\init_P$)
having $|\Pi(P)|$ minimal generators.
Hence, one always has the inequality 
\begin{equation}
\label{codimension-inequality}
|\Jconn(P)| - |\Pi(P)| \geq n
\end{equation}
and equality occurs if and only if this is a {\it complete intersection
presentation}, meaning that the ideal generators
in each case form an $S$-regular sequence.

When these are complete intersection presentations,
one obtains the following Hilbert series calculation 
for $\gr(R_P)$
$$
\left( \sum_{f \in \AAA^\weak(P)} t^{\nu(f)} \xx^f = \right)
\Hilb(\gr(R_P),t,\xx) =
\frac{\prod_{\{J_1,J_2\} \in \Pi(P)} \left( 1-t^2 \xx^{J_1} \xx^{J_2} \right)}
     {\prod_{J \in \Jconn(P)} \left( 1-t \xx^J \right)}.
$$
by iterating the relation 
$$
\Hilb(R/(\theta),t) = (1-t^{\deg(\theta)}) \cdot \Hilb(R,t)
$$
which holds for a nonzero divisor $\theta$ in a (multi-)graded ring $R$;
see \cite[\S I.5, page 25]{Stanley-CCA}.
It only remains to note that when $P$ is a forest with duplications,
Proposition~\ref{forest-with-duplication-ideal-description} shows
$|\Jconn(P)| = n + |\DDD(P)|$ and $|\Pi(P)| = |\DDD(P)|$.
Equality in \eqref{codimension-inequality} follows.
\end{proof}

%%%%%%%%%%%%%%%%%%%%%%%%%%%%%%%%%%%%%%%%%%%%%%%%%%%%%%%%%%
\section{Koszulity}
\label{koszulity-section}
%%%%%%%%%%%%%%%%%%%%%%%%%%%%%%%%%%%%%%%%%%%%%%%%%%%%%%%%%%

We discuss here an immediate consequence of $I^\gr_P$ having
a quadratic initial ideal $I^\init_P$, coming from the theory
of {\it Koszul algebras}. The reader is referred to Fr\"oberg \cite{Froeberg-survey}
and the book by Polishchuk and Positselski \cite{PolishchukPositselski}
for background on Koszul algebras.

\begin{corollary}
For any poset $P$ on $\{1,2,\ldots,n\}$,
the graded ring $A=\gr(R_P)$ is a Koszul algebra.
In other words, $(R_P, \mm)$ is nongraded Koszul in the sense considered
by Fr\"oberg \cite{Froeberg}.

In particular, the $\NN \times \NN^n$-multigraded
Hilbert series $\Hilb(A, t, \xx)$
described in Corollary~\ref{associated-graded-Hilbert-series-corollary}
has the property that $\Hilb(A, -t, \xx)^{-1}$ 
lies in $\NN[t,\xx]$, as it is the Hilbert series
for the Koszul dual algebra $A^!$.
\end{corollary}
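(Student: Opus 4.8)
The plan is to deduce this Corollary directly from the last sentence of Theorem~\ref{thm:two-other-ideal-generators}, which asserts that $I^\gr_P$ has a squarefree quadratic initial ideal $I^\init_P$ under the monomial order $\preceq$. First I would invoke the standard Gr\"obner-degeneration fact that if a homogeneous ideal $I$ in a polynomial ring $S$ has an initial ideal $\init_\preceq(I)$ generated in degree $2$, then $S/I$ is a Koszul algebra; the usual reference is Fr\"oberg \cite{Froeberg-survey}, and the point is that $S/\init_\preceq(I)$, being a quadratic monomial (hence Koszul) algebra, has the same Hilbert series as $S/I$, which forces $S/I$ to be Koszul as well (this is the Backelin--Fr\"oberg argument, or one may cite it as a known implication). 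Applying this with $I=I^\gr_P$ and using $\gr(R_P) \cong S/I^\gr_P$ from Theorem~\ref{presentations-prop} immediately gives that $A=\gr(R_P)$ is Koszul.

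Next I would unwind what Koszulity of $A=\gr(R_P)$ says about $R_P$ itself. By definition $\gr(R_P) = \gr_\mm(R_P)$ is the associated graded ring of $(R_P,\mm)$ with respect to the $\mm$-adic filtration, where $\mm$ is the $\NN$-homogeneous maximal ideal. This is precisely Fr\"oberg's setup \cite{Froeberg} for calling a (non-graded, or filtered) local-type ring $(R_P,\mm)$ \emph{Koszul}: one declares $(R,\mm)$ Koszul exactly when $\gr_\mm(R)$ is a Koszul algebra in the ordinary graded sense. So the second assertion is a restatement of the first, modulo quoting this definition.

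Finally I would record the numerical consequence. For any Koszul algebra $A$ over a field, the graded Poincar\'e--Betti series satisfies $\sum_i \dim_k \Tor^A_i(k,k)\, t^i$-type identities; more precisely, in the $\NN\times\NN^n$-multigraded setting here, Koszulity gives the functional equation $\Hilb(A,t,\xx)\cdot \Hilb(A^!,-t,\xx) = 1$, where $A^!$ is the Koszul dual algebra, whose generators sit in multidegree matching those of $A$. Rearranging, $\Hilb(A,-t,\xx)^{-1} = \Hilb(A^!,t,\xx)$, and since $A^!$ is an honest $\NN$-graded algebra its Hilbert series has nonnegative coefficients, i.e. lies in $\NN[[t,x_1,\dots,x_n]]$; when $A$ is finite-dimensional in each multidegree and the presentation is finite, one in fact lands in $\NN[t,\xx]$ (and in general it is at least a power series with nonnegative coefficients, matching the statement in Corollary~\ref{Koszul-Hilb-corollary}). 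Combining with Corollary~\ref{associated-graded-Hilbert-series-corollary}(i), which identifies $\Hilb(A,t,\xx)$ with $\sum_{f\in\AAA^\weak(P)} t^{\nu(f)}\xx^f$, yields the displayed claim about $\left[\sum_f t^{\nu(f)}\xx^f\right]^{-1}_{t\mapsto -t}$.

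The only real content is the first step --- the implication ``quadratic initial ideal $\Rightarrow$ Koszul'' --- and even that I would not reprove but cite, since it is classical. The main (very mild) obstacle is bookkeeping: making sure the multigrading is carried consistently through the Koszul-duality Hilbert series identity, and checking that the $t$-grading on $S$ in which every $U_J$ has degree one (under which $I^\gr_P$ and $I^\init_P$ are genuinely homogeneous, by Theorem~\ref{thm:two-other-ideal-generators}) is the grading with respect to which Koszulity is asserted --- this is exactly the $t$-grading, and $\nu(f)$ is the $t$-degree of $\overline{\xx}^f$ by Corollary~\ref{associated-graded-Hilbert-series-corollary}(i), so everything lines up.
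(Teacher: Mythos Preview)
Your approach is essentially identical to the paper's: both invoke the standard fact that a quadratic (Gr\"obner) initial ideal implies Koszulity (the paper cites \cite[Prop.~3]{EisenbudReevesTotaro} rather than Fr\"oberg's survey), and both then appeal to the standard Hilbert-series identity for Koszul duals. One small caution: your parenthetical explanation that $S/\init_\preceq(I)$ and $S/I$ having the same Hilbert series ``forces $S/I$ to be Koszul'' is not correct as stated---equality of Hilbert series alone does not imply Koszulity; the actual argument uses semicontinuity of graded Betti numbers (or a filtration/deformation argument) under Gr\"obner degeneration---but since you ultimately cite this as a known implication, the proof stands.
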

\begin{proof}
It is well-known (see e.g., \cite[Prop. 3]{EisenbudReevesTotaro})
that having an initial ideal generated by quadratic monomials,
as is the case with $I^\init_P=\init_{\preceq}(I^\gr_P)$,
suffices to imply Koszulity.  The relation between
the Hilbert series of a Koszul ring $A$ and its Koszul dual $A^!$ is
also standard.
\end{proof}

\begin{example}
Since Theorem~\ref{thm:essence-of-factorization} 
implies that a forest with duplications $P$ has
$$
\Hilb(R_P,t,\xx) =
\frac{\prod_{\{J_1,J_2\} \in \Pi(P)} \left( 1-t^2 \xx^{J_1} \xx^{J_2} \right)}
     {\prod_{J \in \Jconn(P)}\left( 1-t \xx^J \right)}.
$$
one sees that
$$
\Hilb(R_P,-t,\xx)^{-1} =
\frac{\prod_{J \in \Jconn(P)} \left( 1+t \xx^J \right)}
{\prod_{\{J_1,J_2\} \in \Pi(P)} \left( 1-t^2 \xx^{J_1} \xx^{J_2} \right)}
$$
which manifestly lies in $\NN[t,\xx]$.
\end{example}

\begin{example}
The naturally labelled poset $P$ from Example~\ref{ex:five-element-example} 
had $\Hilb(R_P,t,\xx)$ equal to
$$
\frac{1-t^2 (\xx^{(1,2,1,1,0)} + \xx^{(1,2,1,1,1)} + \xx^{(2,2,2,1,1)})
+ t^3 (\xx^{(2,3,2,1,1)} + \xx^{(2,3,2,2,1)})}
{\prod_{J \in \Jconn(P)} \left( 1-t \xx^J \right)}
$$
and hence $\Hilb(R_P,-t,\xx)^{-1}$ equal to
$$
\frac
{\prod_{J \in \Jconn(P)} \left( 1+t \xx^J \right)}
{1-\left( t^2 (\xx^{(1,2,1,1,0)} + \xx^{(1,2,1,1,1)} + \xx^{(2,2,2,1,1)})
+ t^3 (\xx^{(2,3,2,1,1)} + \xx^{(2,3,2,2,1)}) \right)}
$$
which again manifestly lies in $\NN[t,\xx]$.
\end{example}

%%%%%%%%%%%%%%%%%%%%%%%%%%%%%%%%%%%%%%%%%%%%%%%%%%%%%%%%%%%%%%%%%%%%%%%%%%%
\section{The ideal of $P$-partitions, and the maj formula
for forests}
\label{non-natural-labelings-section}
%%%%%%%%%%%%%%%%%%%%%%%%%%%%%%%%%%%%%%%%%%%%%%%%%%%%%%%%%%%%%%%%%%%%%%%%%%%

When the poset $P$ is not naturally labelled, the
$P$-partitions $\AAA(P)$ form a proper subset
of the affine semigroup $\AAA^\weak(P)$ of weak
$P$-partitions.  In fact, this subset $\AAA(P)$
is a {\it semigroup ideal}, in the sense that
$$
\AAA^\weak(P)+ \AAA(P)=\AAA(P).
$$

\begin{definition}
For a poset $P$ on $\{1,2,\ldots,n\}$, let
$\III_P \subset R_P$ denote the ideal of the
affine semigroup ring $R_P$
spanned $k$-linearly by the monomials $\xx^f$
where $f$ runs through $\AAA(P)$.
\end{definition}

From the $R_P$-module filtration
$
\III_P \supset \mm \III_P \supset \mm^2 \supset \cdots
$
one can form the {\it associated $\gr(R_P)$-graded module}
$$
\gr(\III_P) = \III_P/\mm \III_P \oplus  \mm \III_P/\mm^2 \III_P 
                 \oplus  \mm^2 \III_P/\mm^3 \III_P \oplus \cdots .
$$

Recall that Corollary~\ref{cor:minimal-ring-generators} showed
that $R_P, \gr(R_P)$, respectively, were generated as $k$-algebras by
the collection of monomials $\{ \xx^J \}_{J \in \Jconn(P)}$ and their images within
$\mm/\mm^2$, respectively.  
Similarly, Proposition~\ref{fundamental-P-partition-proposition}
shows that the ideal $\III_P$ within $R_P$ is finitely generated,
by the monomials
\begin{equation}
\label{non-minimal-generators}
\left\{ \prod_{i \in \Des(w)} \xx^{w|_{[1,i]}} : w \in \LLL(P) \right\},
\end{equation}
and hence their images within $\III_P/\mm \III_P$ will
generate $\gr(\III_P)$ as a $\gr(R_P)$-module.  
As it is finitely generated, we can deduce the following result exactly as
in Corollary~\ref{associated-graded-Hilbert-series-corollary}.

\begin{corollary}
\label{cor:ideal-denominator-prediction}
The $\NN \times \NN^n$-graded Hilbert series for $\gr(\III_P)$ is
$$
\Hilb(\gr(\III_P),t,\xx)=
\sum_{f \in \AAA(P)} t^{\nu(f)} \xx^f
$$
and can always be expressed in the form
$$
\frac{h(t,\xx)}{\prod_{J \in \Jconn(P)} \left( 1-t \xx^J \right) }
$$
for some polynomial $h(t,\xx)$ in $\ZZ[t,\xx]$.
\end{corollary}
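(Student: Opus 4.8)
The plan is to mirror exactly the proof of Corollary~\ref{associated-graded-Hilbert-series-corollary}, replacing the ring $R_P$ by the module $\III_P$ and the polynomial ring $S$ by itself acting through $\varphi$. First I would establish the Hilbert series formula $\Hilb(\gr(\III_P),t,\xx)=\sum_{f \in \AAA(P)} t^{\nu(f)} \xx^f$. For this, observe that the $\mm$-adic filtration on $\III_P$ is by $\NN^n$-homogeneous submodules, so the associated graded module $\gr(\III_P)$ has the same $\NN^n$-graded Hilbert series as $\III_P$, namely $\sum_{f \in \AAA(P)} \xx^f$. To get the finer $t$-grading right, I would argue that if $f \in \AAA(P)$ with unique expression $f = \sum_{i=1}^{\nu(f)} \chi_{J_i}$ from Proposition~\ref{P-partition-expressions-prop}(ii), then the image $\bar\xx^f$ of $\xx^f$ in $\gr(\III_P)$ lies in the component $\mm^{\nu(f)-1}\III_P / \mm^{\nu(f)}\III_P$ — equivalently, $\xx^f$ lies in $\mm^{\nu(f)-1}\III_P$ but not $\mm^{\nu(f)}\III_P$. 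This follows because $\xx^f = \xx^{J_{i_0}} \cdot \prod_{i \neq i_0} \xx^{J_i}$ where $J_{i_0}$ is a minimal ideal in the multiset (so that $f - \chi_{J_{i_0}}$ is still a $P$-partition, i.e.\ $\xx^{f}/\xx^{J_{i_0}} \in \III_P$), while each remaining $\xx^{J_i} \in \mm$; the non-membership in the deeper filtration piece uses that $\nu$ is additive under the decomposition and that any way of writing $\xx^f$ as (element of $\III_P$)$\times$(product of $k$ elements of $\mm$) forces $k \le \nu(f)-1$, by uniqueness in Proposition~\ref{P-partition-expressions-prop}(ii) applied to the generators $\xx^J$. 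Thus $\bar\xx^f$ has $t$-degree $\nu(f)$ and multidegree $f$, giving the claimed series.

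Next I would establish the rational form of the series. Since $\III_P$ is a finitely generated $R_P$-module by the monomials listed in \eqref{non-minimal-generators}, and $R_P$ is a finitely generated $S$-module via $\varphi$, the associated graded module $\gr(\III_P)$ is a finitely generated $\NN \times \NN^n$-graded module over $S = k[U_J]_{J \in \Jconn(P)}$ (with each $U_J$ of $t$-degree $1$ and $\NN^n$-degree $\chi_J$); the $S$-action factors through $\gr(\varphi):S \twoheadrightarrow \gr(R_P)$ and the generators are the images of the monomials in \eqref{non-minimal-generators} inside $\III_P/\mm\III_P$. Then $\gr(\III_P)$ admits a finite $\NN \times \NN^n$-graded free $S$-resolution $0 \to F_\ell \to \cdots \to F_0 \to \gr(\III_P) \to 0$, by Hilbert's Syzygy Theorem with $\ell \le |\Jconn(P)|$. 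Writing $\Hilb(S,t,\xx) = 1/\prod_{J \in \Jconn(P)}(1 - t\xx^J)$ and taking the alternating sum of the Hilbert series of the $F_i$, one gets
$$
\Hilb(\gr(\III_P),t,\xx) = \frac{\sum_{i=0}^\ell (-1)^i \sum_{(j,\alpha)} \beta_{i,(j,\alpha)} t^j \xx^\alpha}{\prod_{J \in \Jconn(P)}(1 - t\xx^J)},
$$
where $\beta_{i,(j,\alpha)}$ are the graded Betti numbers; the numerator is the desired polynomial $h(t,\xx) \in \ZZ[t,\xx]$.

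I expect the main obstacle to be the careful verification that $\nu(f)$ is precisely the $t$-degree of $\bar\xx^f$ in $\gr(\III_P)$ — that is, that the module filtration $\mm^k \III_P$ detects the statistic $\nu$ correctly and not something off by one or otherwise different from the ring case. The subtlety is that $\III_P$ is not generated in $t$-degree zero inside $\gr(R_P)$ in general, unlike $R_P$ itself; a generator $\xx^f$ of $\III_P$ corresponding to a $P$-partition whose decomposition has $\nu(f)$ parts contributes in filtration degree $\nu(f)-1$, not $0$, which is why the numerator $h(t,\xx)$ need not have constant term $1$. Once one is confident that for any element $\xi$ in $\III_P$ expressible as $\bar\xx^f$ the minimal $k$ with $\xi \in \mm^k\III_P$ equals $\nu(f)-1$ — which is exactly a restatement of the uniqueness in Proposition~\ref{P-partition-expressions-prop}(ii), combined with the observation that $\mm$ is generated by $\{\xx^J\}_{J \in \Jconn(P)}$ from Corollary~\ref{cor:minimal-ring-generators} — the rest is the same homological bookkeeping as in Corollary~\ref{associated-graded-Hilbert-series-corollary}, and no new ideas are needed.
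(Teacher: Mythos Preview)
Your argument for the rational form (second paragraph) is correct and matches the paper: finite generation of $\gr(\III_P)$ over $S$ plus Hilbert's Syzygy Theorem yields the denominator $\prod_{J}(1-t\xx^J)$.

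There is a genuine gap in the first part, however. The claim that $\xx^f \in \mm^{\nu(f)-1}\III_P \setminus \mm^{\nu(f)}\III_P$, and the supporting sub-claim that $f - \chi_{J_{i_0}}$ remains a $P$-partition when $J_{i_0}$ is minimal, are both false. Take the strictly labelled chain $3 <_P 2 <_P 1$ and $f=(0,1,2)=\chi_{\{3\}}+\chi_{\{2,3\}}$ with $\nu(f)=2$: removing the minimal ideal $\{3\}$ gives $(0,1,1)$, which violates the required strict inequality $f(3)>f(2)$. In fact $\III_P$ is principal here, generated by $x_2x_3^2=\xx^f$, so $\xx^f\in\III_P\setminus\mm\III_P$ sits in filtration level $0$, not $\nu(f)-1=1$. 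More generally, any \emph{minimal} generator of $\III_P$ lies in $\III_P\setminus\mm\III_P$ regardless of its $\nu$-value, so the product filtration $\{\mm^k\III_P\}$ simply does not record $\nu$ in the way you assert. (There is also an internal off-by-one: even granting your filtration claim, membership in the $(\nu(f)-1)$-st piece would give $t$-degree $\nu(f)-1$, not $\nu(f)$.)

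What actually makes the formula $\sum_f t^{\nu(f)}\xx^f$ come out is the \emph{induced} filtration $\{\III_P\cap\mm^k\}$: since Corollary~\ref{associated-graded-Hilbert-series-corollary}(i) gives $\xx^f\in\mm^{\nu(f)}\setminus\mm^{\nu(f)+1}$ inside $R_P$, one immediately has $\xx^f\in(\III_P\cap\mm^{\nu(f)})\setminus(\III_P\cap\mm^{\nu(f)+1})$, and the associated graded for this filtration is the image of $\III_P$ inside $\gr(R_P)$, inheriting its $\NN\times\NN^n$-grading directly. The paper's one-line proof (``exactly as in Corollary~\ref{associated-graded-Hilbert-series-corollary}'') silently relies on this identification rather than on the product filtration it wrote down just before the corollary, so the subtlety you flagged in your final paragraph is real --- but your proposed resolution of it does not work.
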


\begin{remark}
\label{rmk:non-minimal-generators-remark}
Note that
the monomials in \eqref{non-minimal-generators} will not
necessarily generate $\III_P$ {\it minimally} in general.  For example,
let $P=P_3$ be the poset with order relations $3<_P 1,2$ 
among those in Example~\ref{ex:three-little-posets}.  Then 
$$
\LLL(P)=\{\quad 3 \cdot 12, \qquad 3 \cdot 2 \cdot 1 \quad \}
$$
where here dots have been added indicating descents.
The generating set for $\III_P$ described in
\eqref{non-minimal-generators} is in this case $\{x_3, \, \quad x_3 \cdot x_2 x_3\}$.
However, it is easy to check (or see Proposition~\ref{prop:principal-ideal} below) 
that in this case $\III_P$ is the principal ideal within $R_P=k[x_3,x_1x_3,x_2x_3,x_1x_2x_3]$ generated
by the single monomial $\{x_3\}$.
\end{remark}

Although we do not know a {\it minimal} generating
set in general for the ideal $\III_P$ within $R_P$, it turns out to be 
easy to characterize when $\III_P$ is {\it principal},
that is, generated by a single element.  This is equivalent
to the existence of a minimum $P$-partition $f_{\min}$ in $\AAA(P)$
with the property that
$$
f_{\min} + \AAA^\weak(P) = \AAA(P)
$$
Such a characterization was provided by
Stanley (see \cite[Lemma 4.5.12]{Stanley-EC})
in the special case where $P$ is
{\it strictly labelled};  we explain here the obvious
modification of his characterization for the general case.  

\begin{definition}
We define a candidate for $f_{\min}$, the function
$
\delta: P \rightarrow \NN
$
whose value $\delta(i)$ is the maximum over all
saturated chains in $P_{\geq i}$ of the number
of {\it strict} covering relations in the chain,
that is, covering relations $i \lessdot_P j$
for which $i >_\NN j$.  It is easily checked both
that 
\begin{enumerate}
\item[(a)]
$\delta$ lies in $\AAA(P)$, and 
\item[(b)]
every $f$ in $\AAA(P)$ has $f(i) \geq \delta(i)$ for
all $i$.
\end{enumerate}

\noindent
Say that the poset $P$ on $\{1,2,\ldots,n\}$ satisfies
the {\it labelled-$\delta$-chain condition}\footnote{The reason
for this terminology is that, in the special case where $P$ is 
strictly labelled, it was called the {\it $\delta$-chain condition}
by Stanley in \cite[\S 4.5]{Stanley-EC}.} if for every $i$, 
all saturated chains in $P_{\geq i}$ have the same number
of strict covering relations, namely $\delta(i)$.
\end{definition}

\begin{proposition}
\label{prop:principal-ideal}
The $P$-partition ideal $\III_P$ within the
(weak) $P$-partition ring $R_P$ is a principal ideal 
if and only if $P$ satisfies the labelled-$\delta$-chain condition.
Furthermore, in this case $f_{\min} = \delta$.
\end{proposition}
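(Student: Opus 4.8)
The plan is to prove the two implications of the equivalence separately, using the function $\delta$ as the obvious candidate for a generator. First, I would observe that since properties (a) and (b) of $\delta$ are already noted to be ``easily checked,'' I may assume $\delta \in \AAA(P)$ and that $f(i) \geq \delta(i)$ for every $f \in \AAA(P)$ and every $i$. This immediately shows that \emph{if} $\III_P$ is principal, say generated by $\xx^{f_{\min}}$, then $f_{\min}$ must be the \emph{minimum} element of $\AAA(P)$ in the coordinatewise order, and since $\delta$ itself lies in $\AAA(P)$ while being dominated coordinatewise by every element of $\AAA(P)$, we get $f_{\min} = \delta$; this also disposes of the ``furthermore'' clause simultaneously. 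So the content is really: $\III_P$ is principal $\iff$ $\delta + \AAA^\weak(P) = \AAA(P)$ $\iff$ $P$ satisfies the labelled-$\delta$-chain condition.

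For the easier direction, suppose $P$ satisfies the labelled-$\delta$-chain condition. I would show $\delta + \AAA^\weak(P) = \AAA(P)$. The inclusion $\subseteq$ follows from the semigroup-ideal property $\AAA^\weak(P) + \AAA(P) = \AAA(P)$ stated at the start of Section~\ref{non-natural-labelings-section}, together with $\delta \in \AAA(P)$. For $\supseteq$, take $f \in \AAA(P)$ and set $g := f - \delta$, which is coordinatewise nonnegative by property (b); the task is to check $g \in \AAA^\weak(P)$, i.e.\ $g(i) \geq g(j)$ whenever $i <_P j$. It suffices to check this on covering relations $i \lessdot_P j$. Here the labelled-$\delta$-chain condition enters: it forces $\delta(i) - \delta(j)$ to equal exactly $1$ if $i \lessdot_P j$ is a strict covering relation and $0$ otherwise (every saturated chain through $P_{\geq i}$ has the same strict-descent count, so extending a maximal strict-chain count in $P_{\geq j}$ by this one cover is forced). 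Meanwhile $f \in \AAA(P)$ gives $f(i) - f(j) \geq 1$ in the strict case and $f(i) - f(j) \geq 0$ in general. Subtracting yields $g(i) \geq g(j)$ in both cases, as desired.

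For the converse, suppose the labelled-$\delta$-chain condition \emph{fails}: there is an element $i$ and two saturated chains in $P_{\geq i}$ with different numbers of strict covering relations, so in particular some saturated chain $i = p_0 \lessdot p_1 \lessdot \cdots \lessdot p_r$ in $P_{\geq i}$ has fewer than $\delta(i)$ strict covers. I would construct a $P$-partition $f \in \AAA(P)$ with $f(i) = \delta(i)$ but $f(p_\ell) > 0$ for some $p_\ell$ along this chain with $p_\ell \not\geq_P i$ strictly needed — more precisely, I want an $f \in \AAA(P)$ that is \emph{not} of the form $\delta + g$ with $g \in \AAA^\weak(P)$, which then shows $\xx^f \notin (\xx^\delta)$ and hence $\III_P$ is not principal (if it were principal it would be $(\xx^\delta)$ by the first paragraph). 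The natural choice: along the given chain, assign to each $p_\ell$ the value $\delta(p_\ell)$ \emph{plus} a correction that stays $\NN$-valued and order-reversing but pushes one value strictly above $\delta$ relative to its neighbor; because the chain has a ``slack'' strict cover (a cover where $\delta$ does not drop but could), one can bump the lower part of the chain up by $1$ without violating any $P$-partition inequality, producing $f$ with $f - \delta \notin \AAA^\weak(P)$. One must check this extends to all of $P$ (e.g.\ by taking $f$ to agree with a large generic linear-extension $P$-partition off the chain, then shaving down); this bookkeeping is the one genuinely fiddly part.

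\textbf{Main obstacle.} The hard part will be the converse: carefully building the explicit $P$-partition $f$ that witnesses non-principality and verifying it is a genuine $P$-partition on all of $P$ (not just along the offending chain) while certifying $f - \delta \notin \AAA^\weak(P)$. Everything else reduces to the already-established facts (a), (b) and the semigroup-ideal identity, plus the rigidity of $\delta$ under the labelled-$\delta$-chain condition. I expect this essentially mirrors Stanley's argument in \cite[Lemma 4.5.12]{Stanley-EC} for the strictly labelled case, with ``number of strict covers'' replacing ``number of covers'' throughout.
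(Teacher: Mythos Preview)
Your treatment of the ``furthermore'' clause and of the forward implication (labelled-$\delta$-chain condition $\Rightarrow$ principality) matches the paper's proof exactly: both reduce to checking $f-\delta$ along covering relations, using that under the condition $\delta(i)-\delta(j)$ equals exactly the strictness indicator of the cover $i\lessdot_P j$.

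For the converse your plan is correct in spirit but more complicated than necessary, and the vague ``extend off the chain by a large generic $P$-partition, then shave down'' step is where you would get stuck. The paper avoids this entirely by first reformulating the labelled-$\delta$-chain condition as the statement that the recursive inequalities
\[
\delta(i)\ \geq\ \delta(j) + \bigl[\text{the cover }i\lessdot_P j\text{ is strict}\bigr]
\]
are all \emph{equalities}. Failure of the condition then immediately hands you a single cover $i\lessdot_P j$ where this inequality is strict (your telescoping-along-a-chain argument finds the same cover, just less directly). With that cover in hand, the paper writes down a global $f$ in one stroke:
\[
f(k)=\begin{cases}\delta(k) & \text{if }k\in P_{\geq i}\text{ and }k\neq j,\\ \delta(k)+1 & \text{otherwise,}\end{cases}
\]
and checks that $f\in\AAA(P)$ (the only cover where $f-\delta$ could fail to be order-reversing is $i\lessdot_P j$ itself, and there the assumed slack $\delta(i)-\delta(j)\geq 1+[\text{strict}]$ saves it), while $(f-\delta)(i)=0<1=(f-\delta)(j)$ shows $f-\delta\notin\AAA^\weak(P)$. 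This eliminates the ``fiddly'' extension step you anticipated; the construction is global from the start.
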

\begin{proof}
The second assertion follows from properties
(a) and (b) above:  if $f_{\min}$ exists, then 
(a) implies $\delta \geq f_{\min}$, while
(b) implies $f_{\min} \geq \delta$.

For the first two assertions, note that the values of $\delta$ satisfy
\begin{equation}
\label{recursive-delta-definition}
\delta(i) 
\begin{cases}
=0 & \text{ if }i\text{ is maximal in }P\\
\geq \delta(j) & \text{ if }i \lessdot_P j
                 \text{ and }i >_\NN j\text{ for some }j \\
\geq \delta(j)+1 & \text{ if }i \lessdot_P j
                 \text{ and }i >_\NN j\text{ for some }j. \\
\end{cases}
\end{equation}
It is then easily seen that the labelled-$\delta$-chain
condition is equivalent to the assertion that
changing the inequalities in \eqref{recursive-delta-definition}
to equalities gives a well-defined recursive formula for $\delta$.

Thus when the labelled-$\delta$-chain condition
holds, any $f$ in $\AAA(P)$ has $f-\delta$ in $\AAA^\weak(P)$:
the recursive formula for $\delta$ shows that 
$f-\delta$ is weakly decreasing along each covering relation
$i \lessdot_P j$.

Conversely, if the labelled-$\delta$-condition fails, 
then there exists some covering relation $i \lessdot_P j$ for which
the inequality in \eqref{recursive-delta-definition}
is strict.  In this case, one can check that the
function defined by
$$
f(k) =
\begin{cases}
\delta(k) & \text{ if } k \in P_{\geq i} \text{ but }k \neq j, \\
\delta(k)+1 & \text{ if } k \in P \setminus P_{\geq i} \text{ or }k = j, \\
\end{cases}
$$
gives an element $f$ of $\AAA(P)$ with the property that
$f-\delta$ does not lie in $\AAA^\weak(P)$:  
$$
(f-\delta)(i)=0 > 1=(f-\delta)(j)
$$ 
so $f-\delta$ fails to be weakly order-reversing along the
cover relation $i \lessdot_P j$.
\end{proof}

When $P$ satisfies the labelled-$\delta$-chain condition,
let $\maj(P):=|\delta|=\sum_{i=1}^n \delta(i)$.
The following is then simply a translation of Proposition~\ref{prop:principal-ideal}.

\begin{corollary}
\label{cor:delta-chain-maj}
A poset $P$ on $\{1,2,\ldots,n\}$ satisfies
$$
\sum_{f \in \AAA(P)} \xx^f = \xx^{f_{\min}} \sum_{f \in \AAA^\weak(P)} \xx^f
$$
for some vector $f_{\min}$ in $\NN^n$
if and only if $P$ satisfies the labelled-$\delta$-chain condition.
In this case, $f_{\min} = \delta$, and one has
$$
\sum_{w \in \LLL(P)} q^{\maj(w)} 
 = q^{\maj(P)}  \cdot (1-q)(1-q^2)\cdots (1-q^n) \cdot \Hilb(R_P, q).
$$
\end{corollary}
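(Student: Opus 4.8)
The plan is to deduce this corollary from Proposition~\ref{prop:principal-ideal} together with the already-established one-variable generating-function identity \eqref{generating-function-in-q}. First I would observe that the displayed identity
$$
\sum_{f \in \AAA(P)} \xx^f = \xx^{f_{\min}} \sum_{f \in \AAA^\weak(P)} \xx^f
$$
for some $f_{\min} \in \NN^n$ is nothing more than a reformulation of the principality of the ideal $\III_P$. Indeed, both sides are formal power series with coefficients in $\{0,1\}$, so the identity is equivalent to the set equality $\AAA(P) = f_{\min} + \AAA^\weak(P)$ inside $\NN^n$; and this set equality says precisely that $\III_P = \xx^{f_{\min}} \cdot R_P$, i.e. that $\III_P$ is the principal ideal generated by the monomial $\xx^{f_{\min}}$ (in particular $f_{\min} \in \AAA(P)$, taking the summand indexed by $0 \in \AAA^\weak(P)$). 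Conversely, if $P$ satisfies the labelled-$\delta$-chain condition then Proposition~\ref{prop:principal-ideal} gives $\delta + \AAA^\weak(P) = \AAA(P)$ directly, so the identity holds with $f_{\min} = \delta$. This proves the first biconditional, and in the forward direction Proposition~\ref{prop:principal-ideal} also identifies $f_{\min} = \delta$.

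For the final displayed formula I would simply specialize $x_i = q$ for all $i$ in the identity just established, now with $f_{\min} = \delta$. The left side becomes $\sum_{f \in \AAA(P)} q^{|f|}$, the prefactor $\xx^{\delta}$ becomes $q^{|\delta|} = q^{\maj(P)}$, and $\sum_{f \in \AAA^\weak(P)} q^{|f|}$ is exactly the collapsed Hilbert series $\Hilb(R_P, q)$. Hence $\sum_{f \in \AAA(P)} q^{|f|} = q^{\maj(P)} \Hilb(R_P, q)$. Multiplying both sides by $(1-q)(1-q^2)\cdots(1-q^n)$ and invoking \eqref{generating-function-in-q} — whose left side is precisely $(1-q)(1-q^2)\cdots(1-q^n)\sum_{f \in \AAA(P)} q^{|f|}$ and whose right side is $\sum_{w \in \LLL(P)} q^{\maj(w)}$ — yields the claimed equality.

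There is no real obstacle here: the whole argument is a translation of Proposition~\ref{prop:principal-ideal} into generating-function language, followed by the $x_i \mapsto q$ specialization of Corollary~\ref{generating-functions-corollary}. The only point worth spelling out is the equivalence between the power-series identity and the set equality $\AAA(P) = f_{\min} + \AAA^\weak(P)$, and between the latter and principality of $\III_P$; both are immediate once one notes that $R_P$ and its semigroup ideal $\III_P$ are spanned by distinct monomials indexed by $\AAA^\weak(P)$ and $\AAA(P)$ respectively.
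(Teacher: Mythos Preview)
Your argument is correct and matches the paper's approach exactly: the paper omits the proof entirely, stating only that the corollary ``is then simply a translation of Proposition~\ref{prop:principal-ideal},'' and what you have written is precisely that translation spelled out (together with the specialization invoking \eqref{generating-function-in-q}).
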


\begin{example}
\label{maj-forest-formula-example}
Recall from the Introduction that a forest poset $P$ is
one in which an element is covered by at most one other element.
Thus any forest poset $P$ on $\{1,2,\ldots,n\}$ 
always satisfies the labelled-$\delta$-chain condition,
since for each $i$ there is only one maximal chain in $P_{\geq i}$.

Note also that, for forest posets, since no duplications are
used in their construction, $\DDD(P)$ is empty, so that
$\Pi(P)$ is empty, and $\Jconn(P)$ is simply the set of
all principal order ideals $P_{\leq i}$.
Thus one concludes in this case, from 
Theorem~\ref{thm:duplicated-forest-extensions} and 
Corollary~\ref{cor:delta-chain-maj} that
for arbitrarily labelled forest posets $P$
$$
\sum_{w \in \LLL(P)} q^{\maj(w)} 
 = q^{\maj(P)} \,\, \frac{ [n]!_q }{ \prod_{i=1}^n \left[ \,\,\,  |P_{\leq i}| \,\,\, \right]_q }.
$$
This is the major index $q$-hook formula for forests of
Bj\"orner and Wachs \cite[Theorem 1.2]{BjornerWachs}.
See also \cite[\S 6]{BFLR}.
\end{example}

\begin{example}
\label{maj-forest-with-duplication-formula-example}
More generally, there is an easy sufficient (but not necessary)
condition on the labelling of a forest with
duplications $P$ to make it satisfy
the labelled-$\delta$-chain condition: 
for every duplication pair $\{a,a'\}$ and every
duplicated pair of covering edges 
(i.e. either of the form $b \lessdot_P a, a'$ 
or of the form $a, a' \lessdot_P b$), assume that
both covering edges in the pair have the same weak/strict
nature, that is either $b <_\NN a,a'$ or $b >_\NN a,a'$.  

Then for such labellings of a forest with duplications
one has
$$
\sum_{w \in \LLL(P)} q^{\maj(w)} 
 = q^{\maj(P)} \,\,  [n]!_q
 \cdot {\prod_{\{J_1,J_2\} \in \Pi(P)} 
        \left[ \,\,\, |J_1|+|J_2| \,\,\, \right]_q } \Bigg /
               {\prod_{J \in \Jconn(P)} \left[ \,\,\, |J| \,\,\, \right]_q }.
$$
\end{example}

\begin{remark}
Because they are normal affine semigroup rings,
a result of Hochster \cite[Theorem 6.3.5(a)]{BrunsHerzog}
implies that the weak $P$-partition rings $R_P$ are {\it always} 
Cohen-Macaulay.  We have seen that $R_P$ is a complete intersection
whenever $P$ is a forest with duplications, and it will be shown
in the next section that the converse also holds.

Thus one might ask for a combinatorial characterization
of when $R_P$ has the intermediate property of being 
{\it Gorenstein}, that is, the {\it canonical module}
$\Omega(R_P)$ is isomorphic to $R_P$ itself.  This is answered
already by Stanley's work on the $\delta$-chain
condition that was mentioned earlier, as we now explain.  

A result \cite[Theorem 6.3.5(b)]{BrunsHerzog} often
attributed both to Danilov and to Stanley
implies that the canonical module $\Omega(R_P)$ is
isomorphic to the ideal within $R_P$ spanned $k$-linearly by
the monomials $\xx^f$ as $f$ runs through the set $\AAA^\strict(P)$
of all strict $P$-partitions.  Hence $\Omega(R_P)\cong R_P$
exactly when 
$$
\AAA^\strict(P) = f_{\min} + \AAA^\weak(P)
$$
for some $f_{\min}$.  Stanley showed that such an $f_{\min}$
exists (and equals $\delta$) exactly when $P$ satisfies his original
{\it $\delta$-chain condition}, that is, for every $i$, 
all maximal chains in $P_{\geq i}$
have the same length.
\end{remark}

%%%%%%%%%%%%%%%%%%%%%%%%%%%%%%%%%%%%%%%%%%%%%%%%%%%%%%%%%%%%%%%
\section{Characterizing complete intersections: 
proof of Theorem~\ref{thm:c.i.-characterization}}
\label{CI-characterization-section}
%%%%%%%%%%%%%%%%%%%%%%%%%%%%%%%%%%%%%%%%%%%%%%%%%%%%%%%%%%%%%%%

Recall that in the second proof of Theorem~\ref{thm:duplicated-forest-extensions}
in Section~\ref{second-forest-proof-section}, it was noted
that any of the presentations of three rings
$R_P, \gr(R_P), S/I^\init_P$
given in Theorem~\ref{thm:minimal-presentation}
had the same number of generators and relations.
Thus any of these is a complete intersection presentation if and only
if it is true for all three of them;  we will say that 
$P$ is a {\it c.i. poset} when this holds.  It was further
shown there that forests with duplication $P$ are c.i. posets.
Our goal now is to show that this property
{\it characterizes} forests with duplication.  In the
process, we will encounter more equivalent characterizations,
including one by forbidden induced subposets (Theorem~\ref{ThmForbiddenSubposets}).

\subsection{Nearly principal ideals}

Given a subset $A$ of a elements in a poset $P$,
let $I(A)$ denote the smallest order ideal of $P$ containing $A$,
that is,
$$
I(A):=\{ p \in P: \text{ there exists }a\in A\text{ with }p \leq_P a\}.
$$
Recall also that $\Pi(P)$ denotes the set of pairs
$\{J_1, J_2\}$ of nonempty connected order ideals
of $P$ that intersect nontrivially.

\begin{definition}
Define the set $\BBB(P)$ of all connected, 
{\it nonprincipal} order ideals of $P$,
and define a map 
$$
\begin{array}{rcl}
\Pi(P) & \overset{\pi}{\longrightarrow} &\BBB(P) \\
 \{J_1,J_2\} &\longmapsto &J_1 \cup J_2.
\end{array}
$$
It is easy to check that $\pi$ is well-defined.  It is also surjective:
any nonprincipal connected order ideal $J$ with maximal elements
$j_1,j_2,\ldots,j_m$ for $m \geq 2$ can be written as the union 
$J=J_1 \cup J_2$ where $J_1:=I(j_1)$ and $J_2:=I(j_2,\ldots,j_m)$.

Say that an order ideal $J$ in $\BBB(P)$ is
{\it nearly principal} if its fiber $\pi^{-1}(J)$ for this
surjective map $\pi$ contains only one element.
In other words, $J$ is connected, nonprincipal, and there is a unique
(unordered) pair $\{J_1,J_2\}$ of connected ideals that intersect nontrivially
with union $J_1 \cup J_2=J.$
\end{definition}

It turns out that one can be much more explicit about the
nature of nearly principal ideals;
see Proposition~\ref{nearly-principal-prop}
below.  But our immediate goal is to show how they help characterize
the posets $P$ for which $R_P \cong S/I_P$ is a complete intersection
presentation. 

\begin{proposition}
\label{complete-intersection-prop}
For any poset $P$ on $\{1,2,\ldots,n\}$,
the following are equivalent:
\begin{enumerate}
\item[(i)] Any or all of the presentations $R_P \cong S/I_P$
and  $\gr(R_P) \cong S/I^\gr_P$ and $S/I^\init_P$
are complete intersection presentations.
\item[(ii)] $|\Pi(P)|=|\BBB(P)|=|\Jconn(P)|-|P|$.
\item[(iii)] The surjection $\pi: \Pi(P) \rightarrow \BBB(P)$ is a bijection.
\item[(iv)] Every connected order ideal of $P$
is either principal or nearly principal.
\end{enumerate}
\end{proposition}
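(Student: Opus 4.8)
The plan is to prove Proposition~\ref{complete-intersection-prop} by a cycle of implications, exploiting the fact established earlier (in the second proof of Theorem~\ref{thm:duplicated-forest-extensions}) that all three presentations $R_P \cong S/I_P$, $\gr(R_P) \cong S/I^\gr_P$, and $S/I^\init_P$ have $|\Jconn(P)|$ generators and $|\Pi(P)|$ relations, and that equality in the codimension inequality \eqref{codimension-inequality}, namely $|\Jconn(P)| - |\Pi(P)| = n$, is precisely the complete intersection condition. So (i) is literally the statement that $|\Pi(P)| = |\Jconn(P)| - n = |\Jconn(P)| - |P|$.

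First I would establish the count $|\Jconn(P)| - |P| = |\BBB(P)|$, which is immediate from definitions: $\Jconn(P)$ is the disjoint union of the principal connected order ideals and the nonprincipal connected order ideals, the principal ones are indexed by the elements $p \in P$ (via $P_{\leq p}$, with distinct elements giving distinct principal ideals), and the nonprincipal ones are by definition exactly $\BBB(P)$. This identity makes (i) $\Leftrightarrow$ (ii) formal. Then (ii) $\Leftrightarrow$ (iii): since $\pi \colon \Pi(P) \twoheadrightarrow \BBB(P)$ is a surjection (as noted in the definition, any nonprincipal connected $J$ with maximal elements $j_1,\dots,j_m$, $m\geq 2$, is $I(j_1) \cup I(j_2,\dots,j_m)$), a surjection of finite sets is a bijection if and only if the domain and codomain have the same cardinality. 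Finally (iii) $\Leftrightarrow$ (iv): by definition $J \in \BBB(P)$ is nearly principal exactly when $|\pi^{-1}(J)| = 1$; since $\pi$ is always surjective, it is injective — hence bijective — if and only if every fiber is a singleton, i.e. every nonprincipal connected order ideal is nearly principal; and "every connected order ideal is principal or nearly principal" is just the same statement phrased to include the principal ideals trivially.

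The only genuine content beyond bookkeeping is the surjectivity of $\pi$ and the well-definedness of $\pi$ (that $J_1 \cup J_2$ is connected and nonprincipal when $\{J_1,J_2\} \in \Pi(P)$), but both of these are already asserted in the definition preceding the proposition, so I may simply cite them. Thus I expect no real obstacle here: the proposition is essentially a repackaging of the codimension count together with the elementary combinatorics of the map $\pi$. The main thing to be careful about is making the cardinality identity $|\Jconn(P)| = |P| + |\BBB(P)|$ fully explicit, and noting that in (i) one only needs the equivalence for one of the three presentations since they share generator and relation counts, so that "any or all" is justified. I would write the proof as the short chain (i) $\Leftrightarrow$ (ii) $\Leftrightarrow$ (iii) $\Leftrightarrow$ (iv) with a one-line justification for each link.
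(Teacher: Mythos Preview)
Your proof is correct and follows essentially the same approach as the paper: the equivalence (i)$\Leftrightarrow$(ii) is the codimension equality from the second proof of Theorem~\ref{thm:duplicated-forest-extensions}, the equivalence (ii)$\Leftrightarrow$(iii) is the trivial fact that a surjection of finite sets is a bijection iff the cardinalities agree (using $|\BBB(P)|=|\Jconn(P)|-|P|$), and (iii)$\Leftrightarrow$(iv) is immediate from the definition of nearly principal. Your write-up is somewhat more explicit than the paper's three-sentence proof, but there is no substantive difference.
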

\begin{proof}
The equivalence of (i) and (ii) appeared already in the second proof of 
Theorem~\ref{thm:duplicated-forest-extensions}.
The equivalence between (ii) and (iii) is trivial, since by
definition one has the equality
$
|\BBB(P)|=|\Jconn(P)|-|P|.
$ 
The equivalence of (iii) and (iv) is immediate from the
definition of a nearly principal ideal.
\end{proof}

Say that $Q$ is an {\it (induced) subposet} of $P$
if one has an injective map $i: Q \rightarrow P$ for which 
$i(q) \leq_P i(q')$ if and only if $q \leq_Q q'$.
Condition (iv) of Proposition~\ref{complete-intersection-prop}
lets one deduce the following.

\begin{corollary}
\label{PropCIStab}
Induced subposets of c.i.-posets are c.i.-posets.
\end{corollary}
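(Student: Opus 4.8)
The plan is to reduce everything to the characterization in Proposition~\ref{complete-intersection-prop}(iv): a poset is a c.i.\ poset precisely when each of its connected order ideals is either principal or nearly principal. So let $P$ be a c.i.\ poset and let $Q\subseteq P$ be an induced subposet; I must show that every connected order ideal $J$ of $Q$ is principal or nearly principal in $Q$. The bridge between order ideals of $Q$ and of $P$ is the closure operator $I(-)$: view $J$ as a subset of $P$ and form $I(J)$, the smallest order ideal of $P$ containing it. I would first record three facts, all routine from the definition of ``induced subposet'' and from $I(-)$ commuting with finite unions. (a) For any order ideal $J$ of $Q$ one has $I(J)\cap Q=J$ (if $q\in Q$ and $q\leq_P j$ with $j\in J$, then $q\leq_Q j$, so $q\in J$). (b) The set of maximal elements of $I(J)$ equals the set of maximal elements of $J$; in particular these lie in $Q$. (c) If $J$ is connected in $Q$, then $I(J)$ is connected in $P$: for each covering pair $j\lessdot j'$ in the Hasse diagram of $J$, a saturated chain of $P$ between $j$ and $j'$ lies inside the order ideal $I(J)$, so $j$ and $j'$ remain in one component of $I(J)$; chaining over a spanning tree of the Hasse diagram of $J$ and using that every element of $I(J)$ lies below some $j\in J$ finishes it.

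Now fix a connected order ideal $J$ of $Q$. By (c), $I(J)$ is a connected order ideal of $P$, so, $P$ being a c.i.\ poset, $I(J)$ is principal or nearly principal. If $I(J)=P_{\leq p}$ is principal, then by (b) the unique maximal element $p$ lies in $Q$ and is also the unique maximal element of $J$; an order ideal with a single maximal element is principal, so $J=Q_{\leq p}$. If instead $I(J)$ is nearly principal, then by (b) it has at least two maximal elements, all in $Q$, so $J$ has at least two maximal elements and is nonprincipal in $Q$; it remains to see that the fiber $\pi_Q^{-1}(J)$ of the surjection $\pi_Q\colon\Pi(Q)\to\BBB(Q)$ is a singleton. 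For this I would construct an injection $\pi_Q^{-1}(J)\hookrightarrow\pi_P^{-1}(I(J))$ by $\{K_1,K_2\}\mapsto\{I(K_1),I(K_2)\}$: each $I(K_i)$ is connected by (c), $I(K_1)\cup I(K_2)=I(J)$ since $I(-)$ commutes with unions, and the pair $\{I(K_1),I(K_2)\}$ still intersects nontrivially --- an inclusion $I(K_1)\subseteq I(K_2)$ would force $K_1\subseteq K_2$ by (a), while $\varnothing\neq K_1\cap K_2\subseteq I(K_1)\cap I(K_2)$; injectivity is again (a). Since $I(J)$ is nearly principal, $|\pi_P^{-1}(I(J))|=1$, so $|\pi_Q^{-1}(J)|\leq 1$, and it is $\geq 1$ because $\pi_Q$ is surjective and $J\in\BBB(Q)$. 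Hence $J$ is nearly principal, and by Proposition~\ref{complete-intersection-prop} the poset $Q$ is a c.i.\ poset.

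I expect the only genuine work to be in fact (c) and in verifying that $\{I(K_1),I(K_2)\}$ intersects nontrivially; everything else is formal manipulation of the operator $I(-)$. The conceptual heart of the argument --- and the point worth isolating as a lemma --- is that $I(-)$ induces an injection $\BBB(Q)\hookrightarrow\BBB(P)$ compatible with the surjections $\pi$, so that passing to an induced subposet can only shrink (never enlarge) the $\pi$-fibers, and in particular cannot create a fiber of size $\geq 2$ out of a singleton fiber.
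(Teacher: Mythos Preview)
Your proof is correct and follows essentially the same approach as the paper: both arguments use the closure map $J\mapsto I(J)$ from order ideals of $Q$ to order ideals of $P$ and check that it carries the conditions ``connected'', ``principal'', ``nearly principal'' appropriately, then invoke Proposition~\ref{complete-intersection-prop}(iv). The paper phrases this as a contrapositive (a bad $J$ in $Q$ yields a bad $I(J)$ in $P$) and leaves facts (a)--(c) as ``one readily checks'', while you argue directly and spell out the verification; but the content is the same.
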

\begin{proof}
Given an injective map $i:Q \rightarrow P$ as above,
and an order ideal $J$ of $Q$ which is connected 
(resp. principal, resp. nearly principal), 
one readily checks that the order ideal $I(i(J))$ of $P$ is 
connected (resp. principal, nearly principal).  
Thus if the subposet $Q$ is not c.i.,
then it contains a connected order ideal $J$ 
which is neither principal nor nearly principal
by Proposition~\ref{complete-intersection-prop}(iv),
and then $P$ contains the connected order ideal 
$I(i(J))$ which is neither principal nor nearly principal,
so that $P$ is also not c.i.
\end{proof}

\noindent
Corollary~\ref{PropCIStab} implies that c.i.-posets are 
exactly the posets avoiding some family of ``forbidden''
posets as induced subposets.
This forbidden family might, {\it a priori}, be 
infinite\footnote{For example, consider the 
family of {\it crown posets} $\{C_n\}_{n \geq _2}$, where $C_n$
has $2n$ elements $\{a_1,\ldots,a_n,b_1,\ldots,b_n\}$ and relations
$$
a_1<b_1>a_2<b_2>a_3<b_3> \cdots <b_{n-2}>a_{n-1}<b_{n_1}>a_n<b_n>a_1.
$$
No two crowns $C_i, C_j$ for $i \neq j$ contains one another as an 
induced subposet, so the family of posets avoiding crowns as 
induced posets is not characterized by avoiding some finite subfamily.}.
Our next goal is to show that c.i. posets
are characterized by avoiding the three posets $P_1, P_2, P_3$
shown in Theorem~\ref{ThmForbiddenSubposets} below. 
For this, it helps to start with a more explicit description 
of nearly principal order ideals. 

\begin{proposition}
\label{nearly-principal-prop}
A connected nonprincipal order ideal $J$ of a finite poset $P$ is
nearly principal if and only if 
\begin{enumerate}
\item[(a)] it has exactly two maximal elements $j_1, j_2$, and
\item[(b)] for every common lower bound $\ell <_P j_1, j_2$, 
the open intervals $]\ell,j_1[$ and $]\ell,j_2[$ coincide.
\end{enumerate}
\end{proposition}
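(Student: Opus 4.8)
The plan is to prove the two implications by an explicit construction in one direction and an obstruction argument in the other, after recording two elementary facts about an order ideal $J$ of $P$ to be used throughout. First, since $J$ is downward closed, the Hasse diagram of $P|_J$ is the induced subgraph of the Hasse diagram of $P$ on the vertex set $J$; in particular a (simple) path in $P|_J$ between two elements of $J$ is the same as a path in $P$ with all vertices in $J$, consecutive vertices being related by $\lessdot_P$. Second, $P|_J$ is connected if and only if the graph $G_J$ on the maximal elements of $J$, having an edge $\{j,j'\}$ whenever $P_{\leq j}\cap P_{\leq j'}\neq\varnothing$, is connected (two nonempty disjoint order ideals are, being downward closed, mutually incomparable, which gives both directions). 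As a first consequence, whenever $J$ is connected with exactly two maximal elements $j_1,j_2$, the pair $\{P_{\leq j_1},P_{\leq j_2}\}$ lies in $\pi^{-1}(J)$: its image is $P_{\leq j_1}\cup P_{\leq j_2}=I(j_1,j_2)=J$; it is not nested because $j_1,j_2$ are distinct maximal elements; and it is not disjoint, else $J$ would be a disjoint union of two nonempty order ideals, hence disconnected. So near-principality of such a $J$ is exactly the statement that this is the \emph{only} element of $\pi^{-1}(J)$.

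For $(\Leftarrow)$ I would assume (a) and (b) and take an arbitrary $\{J_1,J_2\}\in\pi^{-1}(J)$. Because $J_1\cup J_2=J$, the elements $j_1,j_2$ are the only maximal elements of $J$, and the pair is not nested, one checks that after relabelling $j_1\in J_1\setminus J_2$ and $j_2\in J_2\setminus J_1$. It then suffices to show $J_1=P_{\leq j_1}$, since by symmetry this also gives $J_2=P_{\leq j_2}$. Suppose instead there is $q\in J_1$ with $q\not\leq j_1$; then $q\leq j_2$ (as $j_1,j_2$ are the only maximal elements of $J$) and $q\neq j_2$ (as $j_2\notin J_1$), so $q<j_2$. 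Fix a simple path $q=v_0,v_1,\dots,v_k=j_1$ in $P|_{J_1}$; each $v_t$ satisfies $v_t\leq j_1$ or $v_t\leq j_2$. Let $i$ be the largest index with $v_i\leq j_2$; then $i<k$ (since $v_k=j_1\not\leq j_2$), the covering between $v_i$ and $v_{i+1}$ is $v_i\lessdot_P v_{i+1}$ (else $v_{i+1}<v_i\leq j_2$), and $v_{i+1}\not\leq j_2$, hence $v_{i+1}\leq j_1$. Thus $\ell:=v_i$ satisfies $v_i<v_{i+1}\leq j_1$ and $v_i\leq j_2$, $v_i\neq j_2$, so $\ell$ is a strict common lower bound of $j_1,j_2$; and $v_{i+1}\in\,]\ell,j_1[\,\setminus\,]\ell,j_2[$ \emph{unless} $v_{i+1}=j_1$, which would contradict (b). So $v_{i+1}=j_1$, whence $i+1=k$ since the path is simple. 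Symmetrically, let $i'$ be the smallest index with $v_{i'}\leq j_1$; then $i'>0$ (since $v_0=q\not\leq j_1$), the covering between $v_{i'-1}$ and $v_{i'}$ is $v_{i'}\lessdot_P v_{i'-1}$ (else $v_{i'-1}<v_{i'}\leq j_1$), and $v_{i'-1}\leq j_2$, $v_{i'-1}\not\leq j_1$; so $v_{i'}$ is a strict common lower bound of $j_1,j_2$ and $v_{i'-1}\in\,]v_{i'},j_2[\,\setminus\,]v_{i'},j_1[$ \emph{unless} $v_{i'-1}=j_2$. By (b) we conclude $v_{i'-1}=j_2$; but then $j_2=v_{i'-1}\in J_1$, contradicting $j_2\notin J_1$. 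Hence no such $q$ exists and $J_1=P_{\leq j_1}$.

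For $(\Rightarrow)$ I would assume $J$ nearly principal. First, (a): if $J$ had $m\geq 3$ maximal elements, pick a spanning tree $T$ of the connected graph $G_J$; each of its $m-1\geq 2$ edges $e$ splits the maximal elements into nonempty sets $A_e,B_e$, and $\{I(A_e),I(B_e)\}\in\pi^{-1}(J)$: connectedness of $I(A_e)$ follows from the second fact applied to the subtree of $T$ on $A_e$ (whose common-lower-bound graph contains that subtree), the union is $I(A_e)\cup I(B_e)=J$, and the pair is neither nested nor disjoint because $e$ is an edge of $G_J$. Distinct edges give distinct partitions, hence distinct pairs, so $|\pi^{-1}(J)|\geq 2$, contradicting near-principality; since $J$ is nonprincipal as well, $m=2$. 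Now (b): by the first paragraph $\pi^{-1}(J)=\{\{P_{\leq j_1},P_{\leq j_2}\}\}$. If (b) failed, there would be a strict common lower bound $\ell<j_1,j_2$ and, say, an element $x\in\,]\ell,j_1[\,\setminus\,]\ell,j_2[$; since $\ell<x<j_1$ and $j_1,j_2$ are distinct maximal elements, $x$ is incomparable to $j_2$. Then $P_{\leq x}\cup P_{\leq j_2}=I(x,j_2)$ is a connected nonprincipal order ideal (it contains $\ell$ and has incomparable maximal elements $x,j_2$), and $\{I(x,j_2),P_{\leq j_1}\}\in\pi^{-1}(J)$ (its union is $P_{\leq j_1}\cup P_{\leq j_2}=J$ as $x<j_1$, and it is neither nested nor disjoint). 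As $I(x,j_2)$ has two maximal elements it is neither $P_{\leq j_1}$ nor $P_{\leq j_2}$, so this pair differs from $\{P_{\leq j_1},P_{\leq j_2}\}$, contradicting near-principality. Hence (b) holds.

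I expect the only genuine obstacle to be the degenerate case at the end of the $(\Leftarrow)$ argument, where the ``crossing'' vertex on the path coincides with $j_1$, so that $]\ell,j_1[$ is empty and (b) says nothing there; the remedy is precisely to run the crossing argument from both ends of the path and combine the two degenerate conclusions $i+1=k$ and $v_{i'-1}=j_2$ to force $j_2\in J_1$. The rest should be routine bookkeeping with order ideals together with the two facts recorded at the outset.
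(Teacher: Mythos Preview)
Your argument is correct, and in one place it is actually cleaner than the paper's. The paper handles the failure of (a) by writing $J=I(j_1)\cup I(j_2,\dots,j_m)=I(j_2)\cup I(j_1,j_3,\dots,j_m)$, but as written it does not check that $I(j_2,\dots,j_m)$ and $I(j_1,j_3,\dots,j_m)$ are connected, and in general they need not be (e.g., when the common-lower-bound graph $G_J$ on $\max(J)$ is a path $j_1\text{--}j_2\text{--}j_3$). Your spanning-tree argument on $G_J$ repairs exactly this: deleting any edge of the tree bipartitions $\max(J)$ into sets whose generated ideals are genuinely connected, and two edges yield two distinct elements of $\pi^{-1}(J)$. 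For the failure of (b) the two arguments coincide: both exhibit $\{I(j_1),\,I(j_2,k)\}$ (your $I(x,j_2)$) as a second element of $\pi^{-1}(J)$.

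Where the paper is more economical is the ``if'' direction. Instead of tracking a Hasse-diagram path from $q$ to $j_1$ and analyzing the last crossing into $P_{\leq j_2}$ and the first crossing into $P_{\leq j_1}$, the paper simply notes that if $J_1\setminus I(j_1)\neq\varnothing$ then connectedness of $J_1$ forces a single comparable pair $k\in J_1\setminus I(j_1)$, $\ell\in I(j_1)$; the case $k<\ell$ is immediately absurd, and the case $\ell<k$ gives $\ell$ a common strict lower bound with $k\in\,]\ell,j_2[\setminus]\ell,j_1[$, contradicting (b) in one stroke. Your two-ended crossing argument works, but it is doing more bookkeeping than necessary: the degenerate case $v_{i+1}=j_1$ that forced you to run the argument from both ends simply does not arise in the paper's formulation, because one is free to take the boundary pair $(k,\ell)$ with $k\notin I(j_1)$ rather than the last place the path sits under $j_2$. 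A minor point you left implicit is that $v_{i'}\neq j_1$; this follows since $v_{i'}<v_{i'-1}\in J$ and $j_1$ is maximal in $J$, so your claim that $v_{i'}$ is a \emph{strict} common lower bound is justified.
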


\begin{proof}
For the ``only if'' assertion, let $J$ be a connected nonprincipal
order ideal in $P$ that fails one of the two conditions above.
\begin{itemize}
  \item If $J$ fails (a), having distinct maximal elements 
  $j_{1},j_{2},\cdots,j_m$ with $m \geq 3$,
  then it can be written in at least two ways as a union of connected order
  ideals intersecting nontrivially:
     $$
     \begin{aligned}
      J &= I(j_1) \cup I(j_2,j_3,j_4,\ldots,j_{m}) \\
        &= I(j_2) \cup I(j_1,j_3j_4,\ldots,j_{m})
     \end{aligned}
     $$
  Hence $J$ is not nearly principal.
  \item If $J$ satisfies (a), so that it
  has two maximal elements $j_1$ and $j_2$,
  but fails (b) by having a lower bound $\ell <_P j_1, j_2$
  and an element $k$ of $]\ell,j_1[$ not lying in $]\ell,j_2[$, 
  then $J$ can again be written in at least two ways as a union of 
  connected order ideals intersecting nontrivially
   $$
   \begin{aligned}
       J &= I(j_{1}) \cup I(j_{2}) \\
         &= I(j_{1}) \cup I(j_{2},k).
   \end{aligned}
   $$
  Note that $I(j_2,k)$ is connected because it is the 
  union of two principal ideals that both contain $\ell$.
  This shows $J$ is not nearly principal. 

\end{itemize}

For the ``if'' assertion, assume that $J$ is a connected nonprincipal
ideal satisfying conditions (a), (b) above.  We wish to show that,
given any expression $J = J_1 \cup J_2$ where $J_1, J_2$ are connected
order ideals intersecting nontrivially, one can re-index
so that $J_1=I(j_1)$ and $J_2=I(j_2)$.  By condition (a), one
can re-index without loss of generality so that 
$j_1 \in J_1 \setminus J_2$ and $j_2 \in J_2 \setminus J_1$.
Therefore $I(j_1) \subseteq J_1$, so it only remains to show
the reverse inclusion, that is, $J_1 \setminus I(j_1)$ is empty.  
If not, then by the connectivity of $J_1$, there must exist 
$k, \ell$ with $k \in J_1 \setminus I(j_1)$
and $\ell \in I(j_1)$ such that $k,\ell$ are comparable in $P$.

If $k <_P \ell$, then together with $\ell \leq_P j_1$,
this contradicts $k \not\in I(j_1)$.

If $\ell <_P k$, then note that $k \in J=I(j_1,j_2)$ together with
$k \not\in I(j_1)$ forces $k \leq j_2$.  Thus $\ell <_P k \leq j_2$
so that $\ell$ is a lower bound for $j_1, j_2$.  However,
then $k$ lies in $]\ell,j_2[$ but not in $]\ell,j_2[$, 
contradicting condition (b).
\end{proof}

\subsection{Two further characterizations of c.i. posets}

\begin{theorem}\label{ThmForbiddenSubposets}
The c.i. posets are those which do not contain
any of the following three posets $\{P_1,P_2,P_3\}$
as induced subposets:

\vskip.1in
\qquad\qquad
\epsfxsize=90mm
\epsfbox{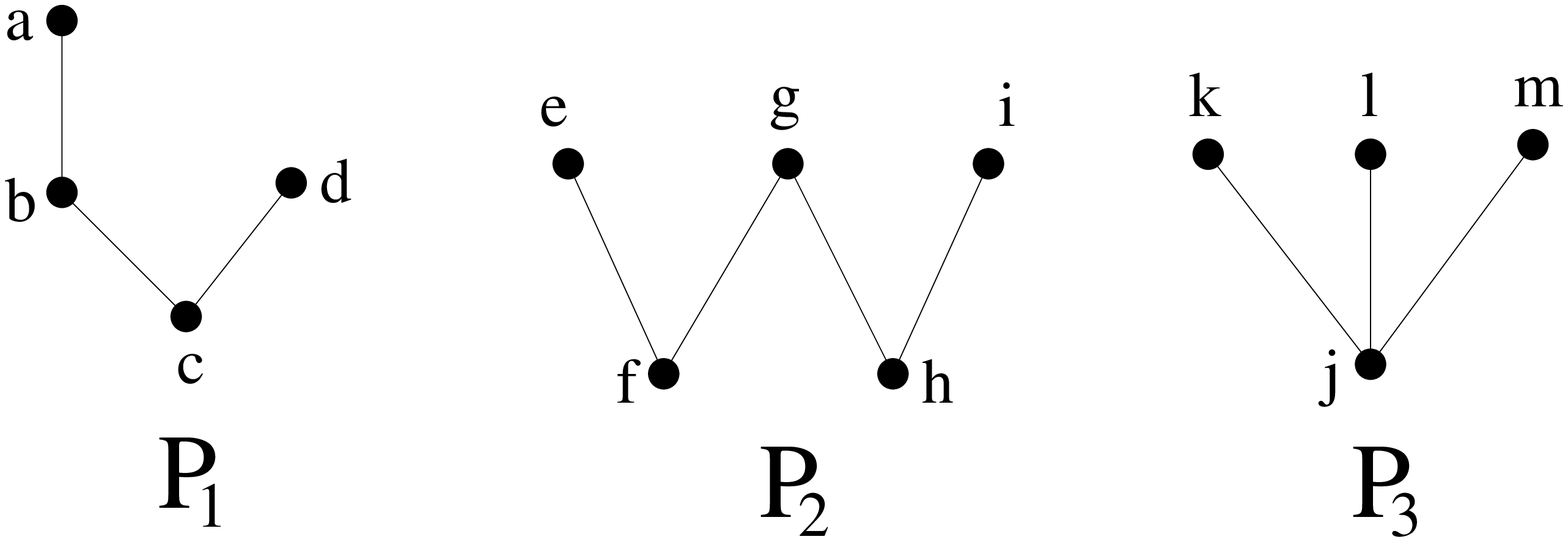}

%\[\left\{ 
%P_{1} = \begin{array}{c} \includegraphics{posets_interdits.1}  \end{array}, \qquad
%  P_{2} =\begin{array}{c} \includegraphics{posets_interdits.2}  \end{array}, \qquad
%  P_{3} =\begin{array}{c} \includegraphics{posets_interdits.3}  \end{array}
%\right\}.\]
\end{theorem}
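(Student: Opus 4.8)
The plan is to prove both directions by contraposition, using only Proposition~\ref{complete-intersection-prop}(iv) (a poset is c.i.\ exactly when every connected order ideal is principal or nearly principal), Proposition~\ref{nearly-principal-prop} (the two-condition description of nearly principal ideals), and Corollary~\ref{PropCIStab} (induced subposets of c.i.\ posets are c.i.). Write $P_1,P_2,P_3$ for the three posets of the figure: in some order, one of them, say $T$, has a single minimal element lying below three mutually incomparable elements; another, say $N$, is the four-element poset with only the relations $\ell<k<j_1$ and $\ell<j_2$; and the third, say $W$, is the five-element poset with only the relations $\ell_1<j_1$, $\ell_1<j_2$, $\ell_2<j_2$, $\ell_2<j_3$. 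For the ``only if'' direction it suffices by Corollary~\ref{PropCIStab} to see that none of $T,N,W$ is itself a c.i.\ poset, and in each case the whole poset is a connected order ideal that is neither principal nor nearly principal: $T$ and $W$ each have three maximal elements, violating condition (a) of Proposition~\ref{nearly-principal-prop}, while in $N$ the two maximal elements $j_1,j_2$ have the common lower bound $\ell$ with $]\ell,j_1[\,=\{k\}\neq\varnothing=\,]\ell,j_2[$, violating condition (b).

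For the ``if'' direction, assume $P$ is not c.i.; by Proposition~\ref{complete-intersection-prop}(iv) there is a connected, nonprincipal order ideal $J$ of $P$ that is not nearly principal, and one splits according to which condition of Proposition~\ref{nearly-principal-prop} fails for $J$. If (b) fails, then $J$ has exactly two maximal elements $j_1,j_2$, a common lower bound $\ell$, and---after swapping $j_1,j_2$ if necessary---an element $k\in\,]\ell,j_1[\,\setminus\,]\ell,j_2[$; then the induced subposet of $P$ on $\{\ell,k,j_1,j_2\}$ is a copy of $N$, the only point to check being that $k\parallel j_2$ (indeed $k\le j_2$ would put $k$ in $]\ell,j_2[$, and $j_2\le k$ would force $j_2\le k<j_1$, contradicting maximality of $j_2$). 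If (a) fails, $J$ has at least three maximal elements, and one invokes the following lemma with the poset $J$ in place of $P$.

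\emph{Lemma.} A finite connected poset with at least three maximal elements contains $T$ or $W$ as an induced subposet. To prove it, let $M$ be its set of maximal elements and build the graph $G$ on vertex set $M$ with an edge $j\sim j'$ whenever $j,j'$ admit a common lower bound. First, $G$ is connected: joining two maximal elements by a fence $z_0,z_1,\dots,z_m$ in the connected Hasse diagram and choosing for each $i$ a maximal element $\hat z_i\ge z_i$ (with $\hat z_0=z_0$, $\hat z_m=z_m$), comparability of $z_i$ and $z_{i+1}$ shows that the smaller of the two is a common lower bound of $\hat z_i$ and $\hat z_{i+1}$, so the $\hat z_i$ trace a walk in $G$ between the two given endpoints. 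A connected graph on at least three vertices has a vertex of degree at least two, so some $j_2\in M$ has two distinct neighbours $j_1,j_3\in M$; pick a common lower bound $a$ of $j_1,j_2$ and a common lower bound $b$ of $j_2,j_3$, and recall $j_1,j_2,j_3$ are pairwise incomparable. If $a\le j_3$ (or, symmetrically, $b\le j_1$) then $a$ lies strictly below the three pairwise-incomparable elements $j_1,j_2,j_3$, so $\{a,j_1,j_2,j_3\}$ induces $T$. Otherwise $a\not\le j_3$ and $b\not\le j_1$; this forces $a\neq b$ and $a\parallel b$ (since $a\le b$ would give $a\le j_3$ and $b\le a$ would give $b\le j_1$), so $\{a,b,j_1,j_2,j_3\}$ consists of five distinct elements inducing exactly $W$. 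This proves the lemma and completes the proof. The one genuinely new idea is the auxiliary graph $G$ on the maximal elements together with the fence argument for its connectivity; everything else is bookkeeping against Propositions~\ref{complete-intersection-prop} and~\ref{nearly-principal-prop}, and the main obstacle is simply spotting that a degree-$\ge 2$ vertex of $G$ is exactly what produces one of the two ``three-maximal-element'' patterns $T$ and $W$.
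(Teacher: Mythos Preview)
Your proof is correct and follows the same overall architecture as the paper's: both directions via contraposition, using Corollary~\ref{PropCIStab} for one direction and Propositions~\ref{complete-intersection-prop}(iv) and~\ref{nearly-principal-prop} for the other, with the case split on whether condition (a) or (b) of Proposition~\ref{nearly-principal-prop} fails. Your treatment of the (b)-fails case and of the ``only if'' direction match the paper's (the paper exhibits two explicit decompositions of each $P_i$ rather than pointing to the failure of (a) or (b), but these are equivalent via Proposition~\ref{nearly-principal-prop}). For the record, in the paper's labelling your $N$ is $P_1$, your $W$ is $P_2$, and your $T$ is $P_3$.

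The one genuine difference is in the (a)-fails case. The paper argues directly: since $J=I(j_1)\cup I(j_2,\ldots,j_m)$ is a connected union of two order ideals, the pieces must intersect, giving some $\ell_1\le j_1,j_2$ after reindexing; repeating with $J=I(j_1,j_2)\cup I(j_3,\ldots,j_m)$ gives $\ell_2\le j_2,j_3$; then the same trichotomy on whether $\ell_1\le j_3$, $\ell_2\le j_1$, or neither, produces $P_3$ or $P_2$. Your auxiliary graph $G$ on the maximal elements together with the fence argument for its connectivity reaches the same configuration (a degree-$\ge 2$ vertex $j_2$ with neighbours $j_1,j_3$ and witnesses $a,b$), and then runs the identical case analysis. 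The paper's route is a bit shorter; yours makes the structure (edges of $G$ are exactly pairs of maximals sharing a lower bound) more transparent and would scale more naturally if one ever needed longer chains of maximal elements.
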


\begin{proof}
Each of $P_{1}$, $P_{2}, P_{3}$ is not a c.i.-poset
because it is itself an order ideal $J=P_i$ 
which is connected but neither principal nor nearly principal.  For example,
one can exhibit these two different decompositions into connected 
order ideals intersecting nontrivially:
$$
\begin{array}{rcccl}
P_1 &=&I(a) \cup I(d)
&=&I(a) \cup I(b,d) \\
P_2 &=&I(e) \cup I(g,i) 
&=&I(e,g) \cup I(i) \\
P_3 &=&I(k) \cup I(\ell,m)
&=&I(k,\ell) \cup I(m) \\
\end{array}
$$

By Proposition~\ref{complete-intersection-prop}(iv) and
it only remains to show that, if a poset $P$ contains a connected
nonprincipal order ideal $J$ failing either of
the conditions (a), (b) in Proposition~\ref{nearly-principal-prop}, 
then $P$ contains one of $P_1,P_2,P_3$ as induced subposets.

First assume $J$ fails condition (a), having distinct maximal elements
$j_1,j_2,\ldots,j_m$ with $m \geq 3$.  Then connectivity of $J$ forces
$I(j_1) \cap I(j_2,j_3,\ldots,j_m)$ to contain at least one element,
whom we will denote $\ell_1$, and re-index so that $\ell_1 \leq_P j_1, j_2$.
Again, connectivity of $J$ forces 
$I(j_1,j_2) \cap I(j_3,j_4,\ldots,j_m)$ to contain at least one element,
whom we will denote $\ell_2$, and without loss of generality, one can
again re-index so that $\ell_2 \leq_P j_2, j_3$.  Now there are three cases:
\begin{enumerate}
   \item[$\bullet$]
   if $\ell_1 \leq_P j_3$ (which holds in particular if $\ell_1 \leq_P \ell_2$),
   then $\{j_1,j_2,j_3,\ell_1\}$ induces a subposet of $P$ isomorphic to $P_3$;
   \item[$\bullet$]
   in a symmetric way, if $\ell_2 \leq_P j_1$
   (which holds in particular if $\ell_2 \leq_P \ell_1$),
   then $\{j_1,j_2,j_3,\ell_2\}$ induces a subposet of $P$ isomorphic to $P_3$;
\item[$\bullet$]
otherwise, $\ell_1,\ell_2$ are incomparable in $P$ and
$\{j_1,j_2,j_3,\ell_1,\ell_2\}$
induces a subposet of $P$ isomorphic to $P_2$.
\end{enumerate}

Finally, assume that $J$ satisfies condition (a), so that
$J=I(j_1,j_2)$, but $J$ fails condition (b), due to the existence of a 
lower bound $\ell <_P j_1,j_2$ and 
(without loss of generality by re-indexing) some element $k$ in
$]\ell,j_1[$ but not in $]\ell,j_2[$.  Then $\{j_1,j_2,k,\ell\}$
induce a subposet of $P$ isomorphic to $P_1$.
\end{proof}

\begin{theorem}\label{ThmEqFwdCip}
The set of c.i. posets is exactly the set of forests with duplications.
\end{theorem}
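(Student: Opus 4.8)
The plan is to prove the two inclusions separately. The inclusion \emph{forests with duplications $\subseteq$ c.i.\ posets} was essentially already established in the second proof of Theorem~\ref{thm:duplicated-forest-extensions} in Section~\ref{second-forest-proof-section}: for a forest with duplications $P$, Lemma~\ref{lemma:duplicated-forest-structure} gives $|\Jconn(P)| = n + |\DDD(P)|$ and $|\Pi(P)| = |\DDD(P)|$, so $|\Jconn(P)| - |\Pi(P)| = n$, which is exactly condition~(ii) of Proposition~\ref{complete-intersection-prop}. Thus only the reverse inclusion \emph{c.i.\ posets $\subseteq$ forests with duplications} requires work.

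For that, I would induct on $|P|$, peeling off one of the three building operations at each step and using Corollary~\ref{PropCIStab} to keep every resulting piece c.i. The base case $|P| \le 1$ is immediate. If $P$ is disconnected, write $P = Q \sqcup Q'$ with $Q$ a connected component and $Q'$ the rest; both are nonempty induced subposets of $P$, hence smaller c.i.\ posets, hence forests with duplications by induction, and $P$ is their disjoint union. Now suppose $P$ is connected with $|P| \ge 2$. Since $P$ is itself a connected order ideal of $P$, Proposition~\ref{complete-intersection-prop}(iv) says $P$ is principal or nearly principal, so by Proposition~\ref{nearly-principal-prop} either $P$ has a maximum $\hat 1$, or $P$ has exactly two maximal elements $j_1 \ne j_2$ and $]\ell,j_1[\,=\,]\ell,j_2[$ for every common lower bound $\ell <_P j_1,j_2$.

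If $P$ has a maximum $\hat 1$, then $P_{<\hat 1} = P \setminus \{\hat 1\}$ is nonempty and $P$ is obtained by hanging $P \setminus \{\hat 1\}$ below $\hat 1$ in the one-element poset $\{\hat 1\}$, so induction applied to $P \setminus \{\hat 1\}$ settles this case. Otherwise $P = P_{\le j_1} \cup P_{\le j_2}$, and I would split on whether $P_{<j_1} = P_{<j_2}$. If so, then $Q := P \setminus \{j_2\}$ equals $P_{\le j_1}$, so $Q \setminus Q_{\le j_1} = \varnothing$ and $j_1$ is vacuously a hanger of $Q$; moreover $P$ is exactly the duplication of the hanger $j_1$ in $Q$ with duplicate $j_2$ (the strict-below sets agree by hypothesis and the strict-above sets are both empty), so induction applied to $Q$ finishes this subcase.

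The remaining, and central, subcase is $P_{<j_1} \ne P_{<j_2}$: re-index so that there is $k$ with $k <_P j_1$ and $k \not\le_P j_2$, and set $P_2 := P_{<j_1} \setminus P_{<j_2}$ and $P_1 := P \setminus P_2 = \{j_1\} \cup P_{\le j_2}$. The claim to verify is that $P$ is obtained by hanging $P_2$ below $j_1$ in $P_1$. Since $j_1$ is maximal in $P$ (hence in $P_1$), the relations this hanging adds are exactly ``$p_2 <_P j_1$ for $p_2 \in P_2$'', which do hold; conversely one must rule out any other comparability between an element $p_2 \in P_2$ and an element $q \in P_1$. The only case using the hypothesis is $q <_P p_2$ with $q \in P_{<j_2}$: then $q <_P p_2 <_P j_1$ exhibits $q$ as a common lower bound of $j_1,j_2$ with $p_2 \in\, ]q,j_1[\,=\,]q,j_2[$, forcing $p_2 <_P j_2$ and contradicting $p_2 \in P_2$. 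Since $k \in P_2$, both $P_1$ and $P_2$ are nonempty proper induced subposets of $P$, hence smaller c.i.\ posets, hence forests with duplications by induction, and therefore so is $P$. Combined with Proposition~\ref{complete-intersection-prop}(i), this also yields Theorem~\ref{thm:c.i.-characterization}. I expect the routine part to be the disconnected case and the elementary checks in the ``maximum'' and ``clone'' cases; the real obstacle is this last subcase, where one must recognize a hanging operation even though neither maximal element of $P$ is a duplicate of the other, and where nearly-principal condition~(b) is precisely what forces the block $P_2 = P_{<j_1}\setminus P_{<j_2}$ to hang cleanly below $j_1$.
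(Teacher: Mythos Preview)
Your argument is correct, and it is a genuinely different---and cleaner---route than the paper's.  The paper first proves the forbidden-subposet characterization (Theorem~\ref{ThmForbiddenSubposets}) and then, in its proof of Theorem~\ref{ThmEqFwdCip}, picks an arbitrary non-minimal element $a$, splits on whether some incomparable $a'$ has $I(a)\cap I(a')\neq\varnothing$, and in the harder case carries out a four-piece decomposition of $P$ whose verification repeatedly invokes the absence of the three forbidden subposets $P_1,P_2,P_3$.  You instead apply Proposition~\ref{complete-intersection-prop}(iv) to $P$ itself, which immediately pins down the maximal elements via Proposition~\ref{nearly-principal-prop}, and then your three subcases (unique maximum; $P_{<j_1}=P_{<j_2}$; $P_{<j_1}\neq P_{<j_2}$) exactly match the three building operations.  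The key step---ruling out a relation $q<_P p_2$ with $q\in P_{<j_2}$ and $p_2\in P_{<j_1}\setminus P_{<j_2}$---uses condition~(b) of Proposition~\ref{nearly-principal-prop} directly, where the paper would instead argue that such a configuration forces a forbidden subposet.  Your approach is shorter and never touches Theorem~\ref{ThmForbiddenSubposets}; the paper's approach, on the other hand, yields that forbidden-subposet characterization as a result of independent interest along the way.
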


\begin{proof}
It was already been proven in Section~\ref{second-forest-proof-section} 
that a forest with duplication is a c.i. poset.  Conversely, given
a c.i. poset $P$, we will show by induction on $|P|$ that
it is a forest with duplications.

The base case $|P|=1$ is trivial. 
In the inductive step, if $P$ contains no two comparable
elements, then $P$ is a disjoint union of posets with one
element, and hence a forest with duplication.
Otherwise, let $a$ be a non minimal element of $P$, and
we consider two cases.

\vskip.1in
\noindent
{\sf Case 1:} Every element $a'$ incomparable to $a$ in $P$ has
$I(a') \cap I(a) = \varnothing$.  

In this case, consider the (nonempty) induced 
subposets $P_{<a}$ and $P \setminus P_{<a}$ in $P$.  
Both are c.i. posets by Proposition~\ref{PropCIStab},
and both have fewer elements than $P$, so they are forests with 
duplication by induction.  And it is straightforward
to check that, in this situation,
$P$ is isomorphic to the poset obtained by
hanging $P_{<a}$ below $a$ in $P \setminus P_{<a}$.
Therefore $P$ is also a forest with duplication.

\vskip.1in
\noindent
{\sf Case 2:} There exists an element $a'$ incomparable to $a$ in $P$ 
for which $I(a') \cap I(a) \neq \varnothing$.

In this case, decompose $P$ into four induced subposets
\begin{equation}
\label{four-pieces}
P=\hat{P} \quad \sqcup \quad P_{<a,a'} 
          \quad \sqcup \quad  P_{<a} \setminus P_{<a,a'} 
          \quad \sqcup \quad P_{<a'} \setminus P_{<a,a'} 
\end{equation}
where $\hat{P}:=P\setminus \left( P_{<a} \cup P_{<a'} \right)$,
and where $P_{<a} \setminus P_{<a'}$ and
$P_{<a'} \setminus P_{<a}$ are allowed to be empty,
but $P_{<a,a'}$ is not.  This decomposition is depicted schematically here:

\vskip.1in
\qquad\qquad
\epsfxsize=90mm
\epsfbox{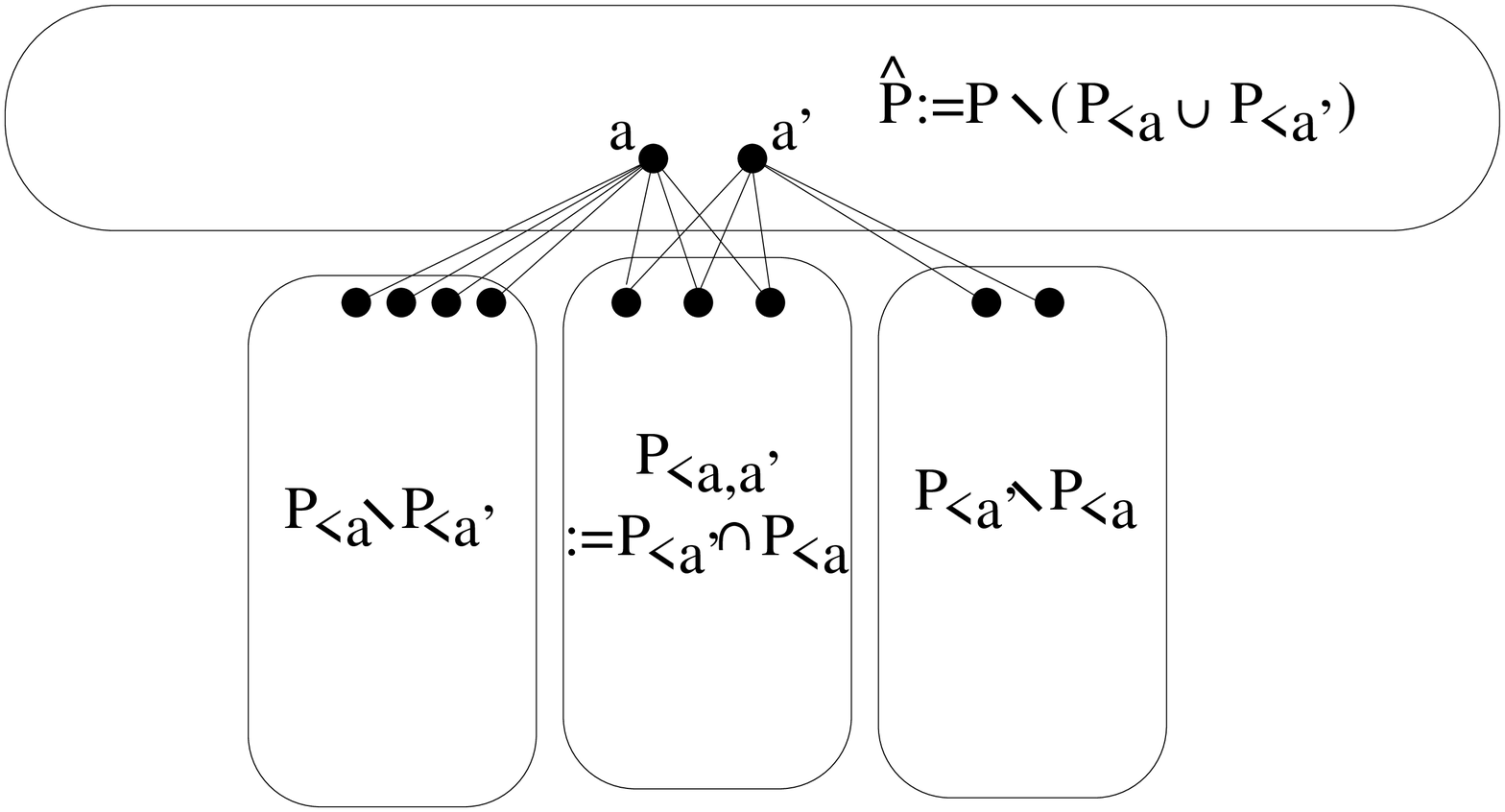}

\noindent
We will show that $P$ is isomorphic to the poset $Q$ built by this  process:
\begin{itemize}
    \item[(1)] Start with $\hat{P} \setminus \{a'\}$.
    \item[(2)] Hang $P_{<a,a'}$ below $a$ in $\hat{P} \setminus \{a'\}$.
    \item[(3)] Duplicate the hanger $a$ in the result, with duplicate 
           element denoted $a'$.
    \item[(4)] Hang $P_{<a} \setminus P_{<a'}$ (if it is nonempty) below $a$, and
         \newline hang $P_{<a'} \setminus P_{<a}$ (if it is nonempty) 
                   below $a'$ in the resulting poset.
\end{itemize}
Since $\hat{P} \setminus \{a'\}$ and $P_{<a,a'}$,
and $P_{<a} \setminus P_{<a'}$ and $P_{<a'} \setminus P_{<a}$ 
are all induced subposets of $P$, they are all c.i. posets 
by Proposition~\ref{PropCIStab}.  Since they have
smaller cardinality than $P$, they are all forests with duplication
by induction.  Therefore $Q$ is also a forest with duplication.

It only remains to show that $P$ is isomorphic to $Q$.
Their underlying sets are the same.  It should also be clear that,
by construction, $P$ and $Q$ have the same restrictions to the last three
pieces on the right side of \eqref{four-pieces}.  For the first piece $\hat{P}$
this is also true, for the following reason:  since $P_{<a,a'}$
is assumed to contain at least one element $\ell$, 
any element $b$ of $P$ will have $b>_P a$ if and only $b >_P a'$, 
else $\{b,a,a',\ell\}$ would induce a subposet
of $P$ isomorphic to $P_1$. 

Now given two elements $x,y$ lying in two {\it different} pieces
from the decomposition \eqref{four-pieces}, 
one must check that $x,y$ are related the same way in $P$ and $Q$.
This is checked case-by-case, according to the two pieces
in which they lie.

\vskip.1in
\noindent
{\sf 
$x$ lies in $P_{<a} \setminus P_{<a,'}$
and $y$ lies in $P_{<a'} \setminus P_{<a}$.}
\newline
Here transitivity implies that
$x,y$ are incomparable both in $P$ and in $Q$.

\vskip.1in
\noindent
{\sf 
$x$ lies in $P_{<a} \setminus P_{<a,'}$ or $P_{<a'} \setminus P_{<a}$
and $y$ lies in $P_{<a,a'}$.}
\newline
Then $x,y$ are incomparable in $Q$.
But the same holds in $P$: if $x <_P y$ then it would contradict 
$x \not\in P_{<a,a,'}$ by transitivity, and if $y <_P x$ then $\{a,a',x,y\}$ 
induces a subposet of $P$ isomorphic to $P_1$.

\vskip.1in
\noindent
{\sf 
$x$ lies in $P_{<a} \setminus P_{<a,'}$ and $y$ lies in $\hat{P}$.}
\newline
Then $y \leq_Q x$ and $y \leq_P x$ are both impossible by transitivity.  
Thus one must check that $x \leq_Q y$ if and only if $x \leq_P y$.
One has $x \leq_Q y$ if and only if $a \leq_P y$, and it is true
that $a \leq_P y$ implies $x \leq_P y$ by transitivity.
Thus it remains to check the converse:  
$a \not\leq_P y$ implies $x \not\leq_P y$.
Assuming $a \not\leq_P y$, if one had $x \leq_P y$, then
pick $\ell$ to be any element of the nonempty subset $P_{<a,a'}$.
Either $\ell \not\leq y$ and
$\{y,x,a,a',\ell\}$ induces a subposet of $P$ isomorphic to $P_2$,
or $\ell \leq y$ and $\{\ell,y,a,a'\}$ induces a subposet
isomorphic to $P_3$.  Contradiction.

\vskip.1in
\noindent
{\sf 
$x$ lies in $P_{<a'} \setminus P_{<a}$ and $y$ lies in $\hat{P}$.}
\newline
Swapping the roles of $a,a'$ puts one in the case just
considered.

\vskip.1in
\noindent
{\sf 
$x$ lies in $P_{<a,a'}$ and $y$ lies in $\hat{P}$.}
\newline
Again $y \leq_Q x$ and $y \leq_P x$ are both impossible by transitivity.  
Thus one must check that $x \leq_Q y$ if and only if $x \leq_P y$.
One has $x \leq_Q y$ if and only if either $a \leq_P y$ or $a' \leq_P y$.
Furthermore, either $a \leq_P y$ or $a' \leq_P y$ will imply
$x \leq_P y$ by transitivity.  Thus it remains to check the converse:  
if both $a \not\leq_P y$ and $a' \not\leq_P y$ then this forces
$x \not\leq_P y$.  This follows since otherwise if $x \leq_P y$
then $\{y,a,a',x\}$ induces a subposet isomorphic to $P_3$ in $P$.

\vskip.1in
\noindent
This completes the proof that $P$ is isomorphic to the
forest with duplication $Q$.
\end{proof}

%%%%%%%%%%%%%%%%%%%%%%%%%%%%%%%%%%%%%%%%%%%%%%%%%%%%%%%%%%%%%%%%%%%%%%%%%
\section{Geometry of $I^\init_P$, graph-associahedra
and graphic zonotopes}
\label{triangulation-section}
%%%%%%%%%%%%%%%%%%%%%%%%%%%%%%%%%%%%%%%%%%%%%%%%%%%%%%%%%%%%%%%%%%%%%%%%%

  Our goal in this section is to explain the geometry underlying
Proposition~\ref{P-partition-expressions-prop}(ii) and the
initial ideal $I^\init_P$, in terms of a subdivision 
of the cone of $P$-partitions.  We explain how
\begin{enumerate}
\item[$\bullet$]
the cone of $P$-partitions is the normal cone $\NNN_\omega$
at a particular vertex $\omega$ in the graphic zonotope
$\ZZZ_{G}$ associated to the Hasse diagram graph $G$ of $P$, 
\item[$\bullet$]
the normal fan of $\ZZZ_{G}$ is refined by the (simplicial) normal fan
of Carr and Devadoss's {\it graph-associahedron} $\PPP_{\BBB(G)}$ associated
to $G$, and
\item[$\bullet$]
the initial ideal $I^\init_P$ is exactly
the Stanley-Reisner ideal $I_{\Delta(P)}$
for the simplicial complex $\Delta_P$
describing the triangulation of the cone $\NNN_\omega$ 
by the normal fan of $\PPP_{\BBB(G)}$.
\end{enumerate}

\begin{definition}
Let $\Delta_P$ denote the simplicial complex 
having the squarefree monomial ideal 
$I^\init_P$ in the polynomial algebra $S=k[U_J]_{J \in \Jconn(P)}$
as its {\it Stanley-Reisner ideal} $I_{\Delta_P}$.  By definition
this means that $\Delta_P$ is the abstract simplicial complex
with vertex set indexed by the collection
$\Jconn(P)$ of nonempty connected order ideals $J$ in $P$,
and a subset $\{J_1,\ldots,J_d\}$ forms a $(d-1)$-simplex
of $\Delta_P$ if and only if the $\{J_i\}$ pairwise intersect
trivially (either disjointly, or nested).
\end{definition}

Recall that a {\it flag (or clique) complex} is an abstract
simplicial complex $\Delta$ on a vertex set $V$ having the following
property:  whenever a subset $\sigma \subset V$ has every pair 
$\{i,j\} \subset \sigma$ spanning an edge of $\Delta$, then the entire
subset $\sigma$ spans a simplex of $\Delta$.

We refer the reader to Stanley \cite[\S III.2 and III.10]{Stanley-CCA}
for the notions of {\it shellability} and {\it regular triangulations}
used in the next result.

\begin{proposition}
For any poset $P$ on $\{1,2,\ldots,n\}$, the simplicial complex
$\Delta_P$ is a flag simplicial complex,
giving a regular triangulation of 
a shellable $(n-2)$-dimensional ball.
\end{proposition}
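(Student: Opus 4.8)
The plan is to check the four assertions of the proposition—flag, regular, triangulation of a shellable $(n-2)$-ball—more or less in turn, reusing the presentations of Section~\ref{presentations-section} and the geometric picture sketched at the opening of this section. Flagness comes for free: by Theorem~\ref{thm:two-other-ideal-generators} the Stanley--Reisner ideal $I_{\Delta_P}=I^\init_P$ is generated by the \emph{squarefree quadratic} monomials $U_{J_1}U_{J_2}$, one for each $\{J_1,J_2\}\in\Pi(P)$, so every minimal non-face of $\Delta_P$ has exactly two vertices, which is precisely the defining property of a flag (clique) complex. Equivalently, $\Delta_P$ is the clique complex of the graph on vertex set $\Jconn(P)$ whose edges join pairs of connected order ideals that intersect trivially, since by Proposition~\ref{P-partition-expressions-prop}(ii) a finite collection of connected order ideals pairwise intersects trivially if and only if each of its two-element sub-collections does.

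Next I would identify $\Delta_P$ as the face poset of a triangulation of a ball. As recalled at the start of Section~\ref{triangulation-section}, the cone of $P$-partitions, regarded modulo its apex direction $(1,1,\dots,1)$, is the normal cone $\NNN_\omega$ at a vertex $\omega$ of the graphic zonotope $\ZZZ_G$ of the Hasse diagram $G$ of $P$; for connected $G$ this cone is pointed and $(n-1)$-dimensional, so a transverse affine section $Q_P$ of $\NNN_\omega$ is an $(n-2)$-dimensional polytope (the vertex $U_P$ of $\Delta_P$ being the cone point of $\NNN_\omega$ adjoined to make $Q_P$ into a ball, and the disconnected case reducing to this by taking joins over the components of $G$). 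Now Proposition~\ref{P-partition-expressions-prop}(ii) asserts that every $P$-partition lies in \emph{exactly one} simplicial subcone $\mathrm{cone}(\chi_{J_1},\dots,\chi_{J_d})$ with the $J_i\in\Jconn(P)$ pairwise intersecting trivially: the existence half shows these simplicial cones cover $\NNN_\omega$, and the uniqueness half shows any two of them meet in a common face, since a point in the relative interiors of two distinct cones would acquire two distinct such decompositions. Hence they form a genuine geometric simplicial subdivision of $\NNN_\omega$ whose nerve is exactly $\Delta_P$, and $\Delta_P$ triangulates the $(n-2)$-ball $Q_P$.

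Regularity and shellability are then consequences of the Gr\"obner theory already developed. A triangulation of a finite point (or ray) configuration is regular precisely when the associated radical monomial ideal is an initial ideal of the toric ideal of the configuration for some weight vector; by Theorem~\ref{presentations-prop} one has $I^\init_P=\init_{\preceq}(I_P)$, where $I_P=\ker\varphi$ is the toric ideal of the configuration $\{\chi_J\}_{J\in\Jconn(P)}$ and $\preceq$ is the monomial order built in Section~\ref{presentations-section}, and choosing a generic weight vector $w=(w_J)$ refining $\preceq$ on the finitely many monomials occurring in a Gr\"obner basis gives $I^\init_P=\init_w(I_P)$. Thus $\Delta_P$ is exactly the regular (coherent) triangulation induced by the lifting $\chi_J\mapsto(\chi_J,w_J)$ (cf.\ Sturmfels~\cite{Sturmfels}). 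Shellability follows because a regular triangulation of a polytope is shellable: a line shelling of the lower faces of the lifted polytope $\conv\{(\chi_J,w_J)\}$ projects to a shelling of $\Delta_P$; see Stanley~\cite[\S III.2,\,III.10]{Stanley-CCA}.

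The main obstacle is the second step: promoting the purely set-theoretic uniqueness of Proposition~\ref{P-partition-expressions-prop}(ii) to the \emph{geometric} statement that the cones $\mathrm{cone}(\chi_{J_i})$ fit together face-to-face—so that $\Delta_P$ is literally the face poset of a triangulation rather than merely an abstract complex with the right Hilbert function—and identifying the resulting subdivision of $\NNN_\omega$ with the restriction of the simplicial normal fan of the graph-associahedron $\PPP_{\BBB(G)}$, so that the dimension count yielding the $(n-2)$-ball is the one coming from Section~\ref{triangulation-section}. Once that is in hand, flagness, regularity, and shellability are routine consequences of results already proved.
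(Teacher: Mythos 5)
Your proof is correct and, at its core, is the paper's: flagness is read off from the squarefree quadratic generators of $I^\init_P$, and the paper likewise obtains ``regular triangulation of a ball, hence shellable'' in one stroke from Sturmfels' correspondence between initial ideals of the toric ideal $I_P$ and regular triangulations of the ray configuration $\{\chi_J\}_{J\in\Jconn(P)}$ (see \cite[Chapter 8]{Sturmfels}), using $I^\init_P=\init_\preceq(I_P)$ from Theorem~\ref{presentations-prop}. The only genuine difference is your intermediate geometric step, and the ``main obstacle'' you flag there --- upgrading the set-theoretic uniqueness in Proposition~\ref{P-partition-expressions-prop}(ii) to a face-to-face statement, or importing the graph-associahedron identification proved later --- is not needed: since $\init_\preceq(I_P)$ is already squarefree, Sturmfels' theorem asserts outright that $\Delta_P$ \emph{is} the regular triangulation of the $P$-partition cone determined by any weight vector realizing $\preceq$ on a Gr\"obner basis, so the covering, the face-to-face gluing, the ball structure and its dimension all come packaged with regularity; this is precisely the shortcut the paper takes, and the comparison with $\NNN(\PPP_{\BBB(G)})$ is established afterwards as a separate proposition. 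Your final step (regular $\Rightarrow$ shellable via a line shelling of the lifted polytope) is exactly the implication the paper invokes.
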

\begin{proof}
The fact that $\Delta_P$ is flag comes from the fact that
$I^\init_P$ is generated by (squarefree) {\it quadratic} monomials.
The fact that it gives a regular (and hence shellable) triangulation of a ball
comes from a general result of Sturmfels on
initial ideals and regular triangulations; see 
\cite[Chapter 8]{Sturmfels}.
\end{proof}

We wish to relate $\Delta_P$ to the {\it normal fans} of two polytopes 
associated to the (undirected) graph $G$ on vertex
set $\{1,2,\ldots,n\}$ which is the Hasse diagram of $P$:
\begin{enumerate}
\item[$\bullet$]
the {\it graphic zonotope}
$\ZZZ_G$, and 
\item[$\bullet$]
the {\it graph-associahedron} $\PPP_{\BBB(G)}$ of Carr and Devadoss \cite{CarrDevadoss}.
\end{enumerate}
For a discussion of polytopes, normal fans, and zonotopes, 
see Ziegler's book \cite[Chapter 7]{Ziegler};
for graphic zonotopes and graph-associahedron, 
see \cite[\S 5-7]{PostnikovRWilliams}.

Recall that for two subsets $A,B \subset \RR^n$, their {\it Minkowski sum} is
$$
A+B = \{a+b: a \in A, b \in B\}.
$$ 

\begin{definition}
  The {\it graphic zonotope} $\ZZZ_G$ is the Minkowski sum of the line segments
$\{ [0,e_i-e_j] \}_{\{i,j\} \in E}$.  In particular, taking $G=K_n$,
one has that $Z_{K_n}$ is the $n$-dimensional {\it permutohedron}.
\end{definition}

\begin{definition}
  The {\it graphical building set} $\BBB(G)$ is the collection of
all nonempty vertex subsets $J \subseteq \{1,2,\ldots,n\}$ for which the
vertex-induced subgraph $G|_J$ is connected.

  The {\it graph-associahedron} $\PPP_{\BBB(G)}$ is the Minkowski sum
of the simplices 
$$
\{ \conv(\{e_j\}_{j \in J}) : J \in \PPP_{\BBB(G)}\}
$$
where here $\conv(A)$ denotes the convex hull of the vectors in $A$. 
\end{definition}

Recall that for a convex polytope $\PPP$ in $V=\RR^n$, its {\it normal
fan} $\NNN(\PPP)$ is the collection of cones in the dual
space $V^*$ which partitions linear functionals according
to the face of $\PPP$ on which they achieve their maximum value.  
We will use repeatedly the following well-known fact about
normal fans of Minkowski sums.

\begin{proposition}(see e.g. Ziegler \cite[Prop. 7.12]{Ziegler})
\label{prop:Minkowski-fans}
\newline
The Minkowski sum 
$
\PPP_1 + \cdots + \PPP_d
$ 
has normal fan
$\NNN(\PPP_1 + \cdots + \PPP_d)$ equal to the common refinement of
the normal fans $\NNN(\PPP_1),\ldots,\NNN(\PPP_d)$.
\end{proposition}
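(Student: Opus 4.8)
The plan is to deduce the statement from a single elementary identity about faces, exactly as in Ziegler's treatment. For a polytope $\PPP\subseteq V=\RR^n$ and a linear functional $\ell\in V^*$, write $\PPP^\ell$ for the face of $\PPP$ on which $\ell$ is maximized (nonempty since $\PPP$ is compact), and for a face $F$ of $\PPP$ let $N_F(\PPP)=\{\ell\in V^*: F\subseteq \PPP^\ell\}$ be its normal cone, so that $\NNN(\PPP)=\{N_F(\PPP): F\text{ a face of }\PPP\}$; recall that the common refinement of fans $\NNN_1,\dots,\NNN_d$ in $V^*$ is the fan consisting of all intersections $C_1\cap\cdots\cap C_d$ with $C_i\in\NNN_i$.

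First I would prove the Minkowski-sum-of-faces identity
$$
(\PPP_1+\cdots+\PPP_d)^\ell = \PPP_1^\ell+\cdots+\PPP_d^\ell
\qquad\text{for all }\ell\in V^*.
$$
This is immediate from linearity: with $m_i:=\max_{\PPP_i}\ell$ and $x_i\in\PPP_i$ one has $\ell(x_1+\cdots+x_d)=\sum_i\ell(x_i)\le\sum_i m_i$, so $\max_{\PPP_1+\cdots+\PPP_d}\ell=\sum_i m_i$, with equality precisely when every $x_i\in\PPP_i^\ell$. The same computation yields the sharper statement I will actually use: if $x_i\in\PPP_i$ and $x_1+\cdots+x_d$ lies in $(\PPP_1+\cdots+\PPP_d)^\ell$, then $x_i\in\PPP_i^\ell$ for every $i$.

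Next I would use this to identify the normal cones. Fix a face $F$ of $\PPP:=\PPP_1+\cdots+\PPP_d$, and pick $\ell_0$ with $\PPP^{\ell_0}=F$; by the identity, $F=F_1+\cdots+F_d$ with $F_i:=\PPP_i^{\ell_0}$ a face of $\PPP_i$. Then $\ell\in N_F(\PPP)$ says $F\subseteq\PPP^\ell=\sum_i\PPP_i^\ell$, and applying the sharper statement above to each point of $F$ shows this is equivalent to $F_i\subseteq\PPP_i^\ell$ for all $i$, i.e. to $\ell\in\bigcap_{i=1}^d N_{F_i}(\PPP_i)$. Thus $N_F(\PPP)=\bigcap_i N_{F_i}(\PPP_i)$; conversely, any nonempty intersection $\bigcap_i N_{F_i}(\PPP_i)$ is the normal cone of $(\PPP_1+\cdots+\PPP_d)^\ell$ for any $\ell$ in its relative interior. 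Since the cones $N_{F_i}(\PPP_i)$, as $F_i$ ranges over faces of $\PPP_i$, are exactly the cones of $\NNN(\PPP_i)$, this matches the cones of $\NNN(\PPP_1+\cdots+\PPP_d)$ bijectively with the cones of the common refinement of the $\NNN(\PPP_i)$, which is the claim.

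The only point requiring a little care — not really an obstacle — is the standard dictionary between faces and normal cones: that $F\mapsto N_F(\PPP)$ is an inclusion-reversing bijection onto the cones of $\NNN(\PPP)$, so that the two inclusions above actually pair up cones rather than merely asserting containments, and that the $N_{F_i}(\PPP_i)$ genuinely tile $V^*$. With these facts in hand the argument reduces to the two-line linearity estimate above; full details are in Ziegler \cite[Prop. 7.12]{Ziegler}.
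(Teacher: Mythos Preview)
Your proof is correct and follows the standard approach from Ziegler that the paper cites. Note that the paper itself does not prove this proposition; it is stated as a well-known fact with a reference to \cite[Prop.~7.12]{Ziegler}, so there is no ``paper's own proof'' to compare against---your argument is essentially the one being cited.
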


\begin{proposition}
\label{prop:Minkowski-sum} 
Let $G$ be a graph on vertex set $\{1,2,\ldots,n\}$.
\begin{enumerate}
\item[(i)]
The normal fan $\NNN(\ZZZ_G)$ is the collection of
cones in $\RR^n$ cut out by the graphic arrangement
of hyperplanes $\{ x_i=x_j\}_{\{i,j\} \in E}$.
\item[(ii)]
In particular, when $G$ is the complete graph $K_n$, this graphic arrangement
is the usual type $A_{n-1}$ braid or Weyl chamber arrangement.
\item[(iii)]
The braid arrangement $\NNN(\ZZZ_{K_n})$ 
refines the normal fan $\NNN(\PPP_{\BBB(G)})$.
\item[(iv)] 
The normal fan $\NNN(\PPP_{\BBB(G)})$
in turn refines the normal fan $\NNN(\ZZZ_G)$.
\end{enumerate}
\end{proposition}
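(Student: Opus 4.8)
The plan is to obtain all four parts as formal consequences of Proposition~\ref{prop:Minkowski-fans} (the normal fan of a Minkowski sum is the common refinement of the normal fans of the summands), once one records the normal fans of the two kinds of building blocks occurring here: the segments $[0,e_i-e_j]$ and the simplices $\conv(\{e_j\}_{j\in J})$. Throughout, all the fans in sight contain the common lineality line $\RR\cdot(1,\dots,1)$, and ``refines'' is understood modulo that line.

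For (i), I would first note that a functional $v\in(\RR^n)^*$ is constant on the segment $[0,e_i-e_j]$ exactly when $v_i=v_j$, and otherwise attains its maximum at a single endpoint according to the sign of $v_i-v_j$; hence $\NNN([0,e_i-e_j])$ consists of the two closed half-spaces $\{v_i\ge v_j\}$ and $\{v_i\le v_j\}$ and their common wall $\{v_i=v_j\}$. Since $\ZZZ_G$ is by definition the Minkowski sum of the $[0,e_i-e_j]$ over $\{i,j\}\in E$, Proposition~\ref{prop:Minkowski-fans} identifies $\NNN(\ZZZ_G)$ with the common refinement of these, i.e.\ the fan cut out by the arrangement $\{x_i=x_j\}_{\{i,j\}\in E}$. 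Part (ii) is then immediate: for $G=K_n$ every pair is an edge, so this is the full braid arrangement.

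For (iii) and (iv) the extra ingredient is a description of $\NNN(\conv(\{e_j\}_{j\in J}))$: a functional $v$ achieves its maximum over this simplex on the face spanned by those $e_j$ with $v_j$ largest, so the walls of this normal fan lie among the hyperplanes $\{v_i=v_j\}$ with $i,j\in J$; in particular this fan is refined by the braid arrangement. Since $\PPP_{\BBB(G)}$ is the Minkowski sum of the $\conv(\{e_j\}_{j\in J})$ over $J\in\BBB(G)$ (the singleton $J$'s contributing only the trivial fan), Proposition~\ref{prop:Minkowski-fans} shows $\NNN(\PPP_{\BBB(G)})$ is the common refinement of fans each refined by the braid arrangement, hence itself refined by it; combined with (ii) this is (iii). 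For (iv), I would use translation-invariance of normal fans together with the identity $\conv(\{e_i,e_j\})=e_i+[0,e_j-e_i]$, which gives $\NNN(\conv(\{e_i,e_j\}))=\NNN([0,e_i-e_j])$ for every edge $\{i,j\}$. As every edge of $G$ lies in $\BBB(G)$, the common refinement defining $\NNN(\PPP_{\BBB(G)})$ is taken over an index set containing (up to this identification) all the fans appearing in the common refinement defining $\NNN(\ZZZ_G)$; a common refinement over a larger collection of fans refines that over a sub-collection, so $\NNN(\PPP_{\BBB(G)})$ refines $\NNN(\ZZZ_G)$.

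I do not anticipate a real obstacle: the statement is a bookkeeping exercise built on Proposition~\ref{prop:Minkowski-fans}. The only points needing care are verifying precisely which hyperplanes $\{v_i=v_j\}$ bound the normal cones of the simplices $\conv(\{e_j\}_{j\in J})$ (so that ``refined by the braid arrangement'' is justified), and recording the translation/negation-invariance of the normal fan of a segment, which is exactly what lets one compare $\PPP_{\BBB(G)}$ with $\ZZZ_G$ in both directions.
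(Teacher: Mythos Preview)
Your proposal is correct and follows essentially the same route as the paper: both arguments reduce everything to Proposition~\ref{prop:Minkowski-fans}, compute the normal fan of each segment $[0,e_i-e_j]$ for~(i), observe that each simplex $\conv(\{e_j\}_{j\in J})$ has edges normal to braid hyperplanes for~(iii), and for~(iv) note that the two-element simplices $\conv(\{e_i,e_j\})$ for $\{i,j\}\in E$ already appear among the Minkowski summands of $\PPP_{\BBB(G)}$ with the same normal fan as $[0,e_i-e_j]$. Your write-up is slightly more explicit (the translation-invariance remark, the lineality line), but there is no substantive difference.
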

\begin{proof}
Assertion (i) is well-known, and
follows from the fact that
the hyperplane $x_i=x_j$ is normal
to the line segment $[0,e_i-e_j]$; see e.g.
\cite[\S 5]{PostnikovRWilliams}.

Assertion (ii) is simply a definition of the
type $A_{n-1}$ braid arrangement, as  the collection
of all hyperplanes $x_i=x_j$ for $1 \leq i < j \leq n$.

Assertion (iii) is asserting another well-known fact:
that $\PPP_{\BBB(G)}$ is a {\it generalized permutohedron}
in the sense of Postnikov \cite{Postnikov}; see
\cite[Example 6.2]{PostnikovRWilliams}.  This follows
from Proposition~\ref{prop:Minkowski-fans} 
by checking that each simplex $\conv(\{e_j\}_{j \in J})$ 
has its normal fan refined by the
braid arrangement.  The latter holds because a typical edge
of $\conv(\{e_j\}_{j \in J})$ between vertex 
$e_i$ and vertex $e_j$ is normal to the hyperplane $x_i=x_j$.

Assertion (iv) follows from Proposition~\ref{prop:Minkowski-sum} 
by noting that for each edge $\{i,j\}$ of $G$,
the normal hyperplane $x_i=x_j$ to the Minkowski
summand $[0,e_i-e_j]$ of $\ZZZ_G$ is the normal hyperplane to the
Minkowski summand $\conv(\{e_i,e_j\})$
of $\PPP_{\BBB(G)}$.
\end{proof}

We next review basic facts about the structure of the normal fans
for the permutohedron $\ZZZ_{K_n}$, 
graphic zonotope $\ZZZ_G$, and
graph associahedron $\PPP_{\BBB(G)}$, all inside $\RR^n$.

\vskip.1in
\noindent
{\sf Permutohedron.}
Rays in the normal fan $\NNN(\ZZZ_{K_n})$
are indexed by nonempty proper subsets $J$ of $\{1,2,\ldots,n\}$;
such a ray is the nonnegative span of the characteristic vector
$\chi_J$ in $\RR^n$.  The maximal cones are indexed
by permutations $w=(w_1,\ldots,w_n)$ and defined 
by the inequalities $x_{w_1} \geq x_{w_2} \geq \cdots \geq x_{w_n}.$  
A ray indexed by a subset $J$ lies in the cone indexed by $w$ 
if and only if $J=w|_{[1,i]}$ for some $i=1,2,\ldots,n-1$.

\vskip.1in
\noindent
{\sf Graphic zonotope.}
Maximal cones in the normal fan $\NNN(\ZZZ_{G})$,
or vertices in the graphic zonotope, are indexed by 
{\it acyclic orientations} $\omega$ of the
graph $G$;  such a cone corresponds to the subset of $\RR^n$ defined by
the conjunction of the inequalities $x_i \geq x_j$ whenever 
$\omega$ directs an edge of $G$ as $i \rightarrow j$.  
In slightly different terms, the transitive closure
of an acyclic orientation $\omega$ gives a partial
order $P_\omega$ on $\{1,2,\ldots,n\}$, and
the maximal cone $\NNN_\omega$ of  $\NNN(\ZZZ_{G})$ corresponding to $\omega$
is the cone of (weak) $P_\omega$-partitions.  The decomposition of
Proposition~\ref{fundamental-P-partition-proposition} comes 
from expressing this cone $\NNN_\omega$ as the union of the maximal 
cones of $\NNN(\ZZZ_{K_n})$ corresponding to permutations
$w$ in the set of linear extensions $\LLL(P_\omega)$.

\vskip.1in
\noindent
{\sf Graph associahedron.}
Rays in the normal fan $\NNN(\PPP_{\BBB(G)})$
are a subset of the rays in $\NNN(\ZZZ_{K_n})$:
one only includes the rays indexed by nonempty proper 
subsets $J$ of $\{1,2,\ldots,n\}$ for which the 
vertex-induced subgraph $G|_J$ is {\it connected}.  In other
words, $J$ is required to be an element of the {\it graphical building set} $\BBB(G)$.
A collection of rays $\{J_1,\ldots,J_t\}$ spans a cone in  $\NNN(\PPP_{\BBB(G)})$
if and only if pairwise one has that $J_i, J_k$ intersect trivially
(either they are disjoint or nested) and if disjoint,
then $J_i \cup J_k$ induces a disconnected subgraph $G|_{J_1 \cup J_2}$
(that is, $J_1 \cup J_2$ is not in $\BBB(G)$).
Such collections form the simplices in what is
called the {\it nested set complex} $\Delta_{\BBB(G)}$ for the building
set $\BBB(G)$.

\begin{figure}
$$
\NNN(\ZZZ_G)
$$
$$
\epsfxsize=40mm
\epsfbox{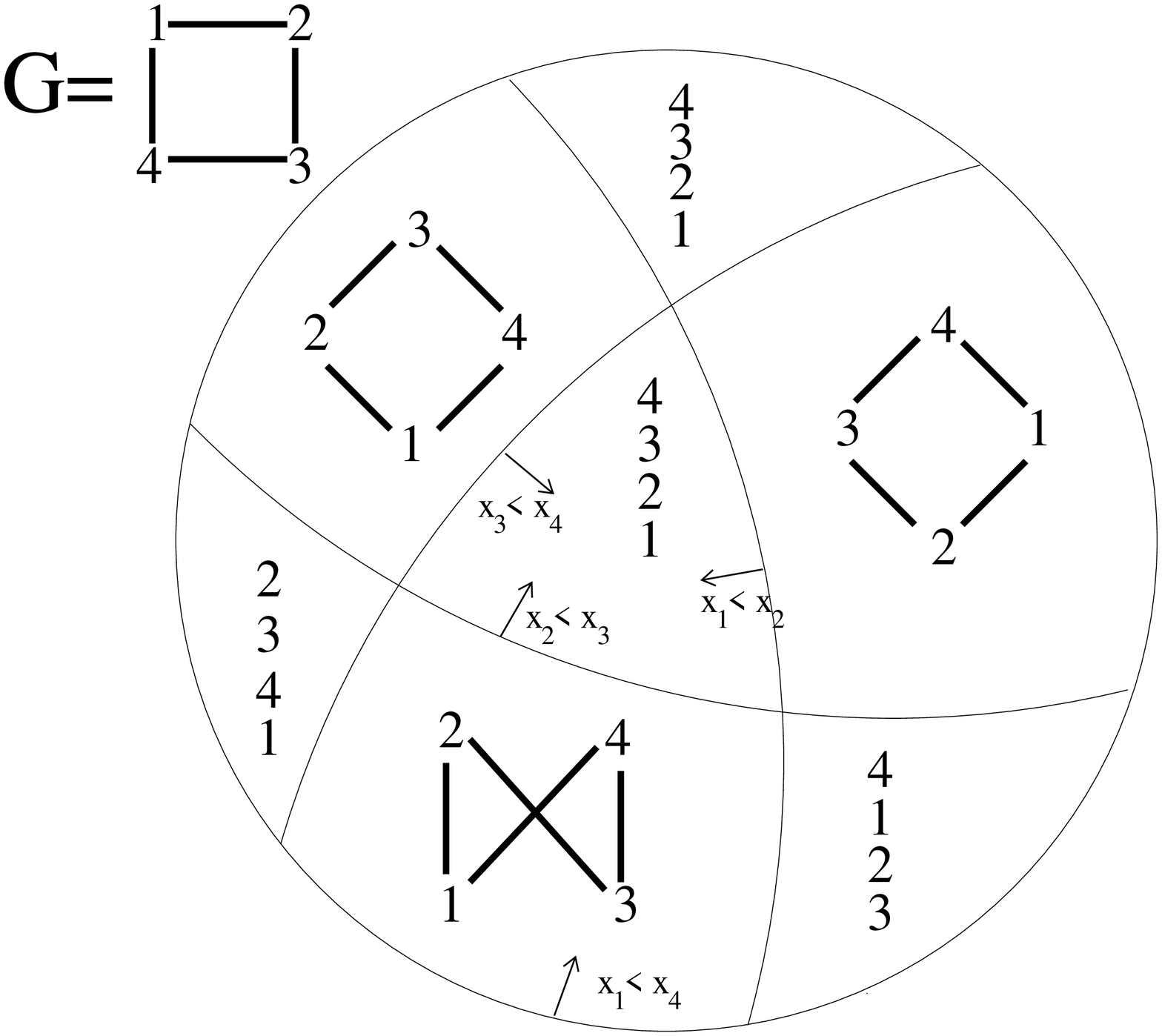}
$$
$$
\NNN(\ZZZ_{K_4})
\epsfxsize=40mm
\epsfbox{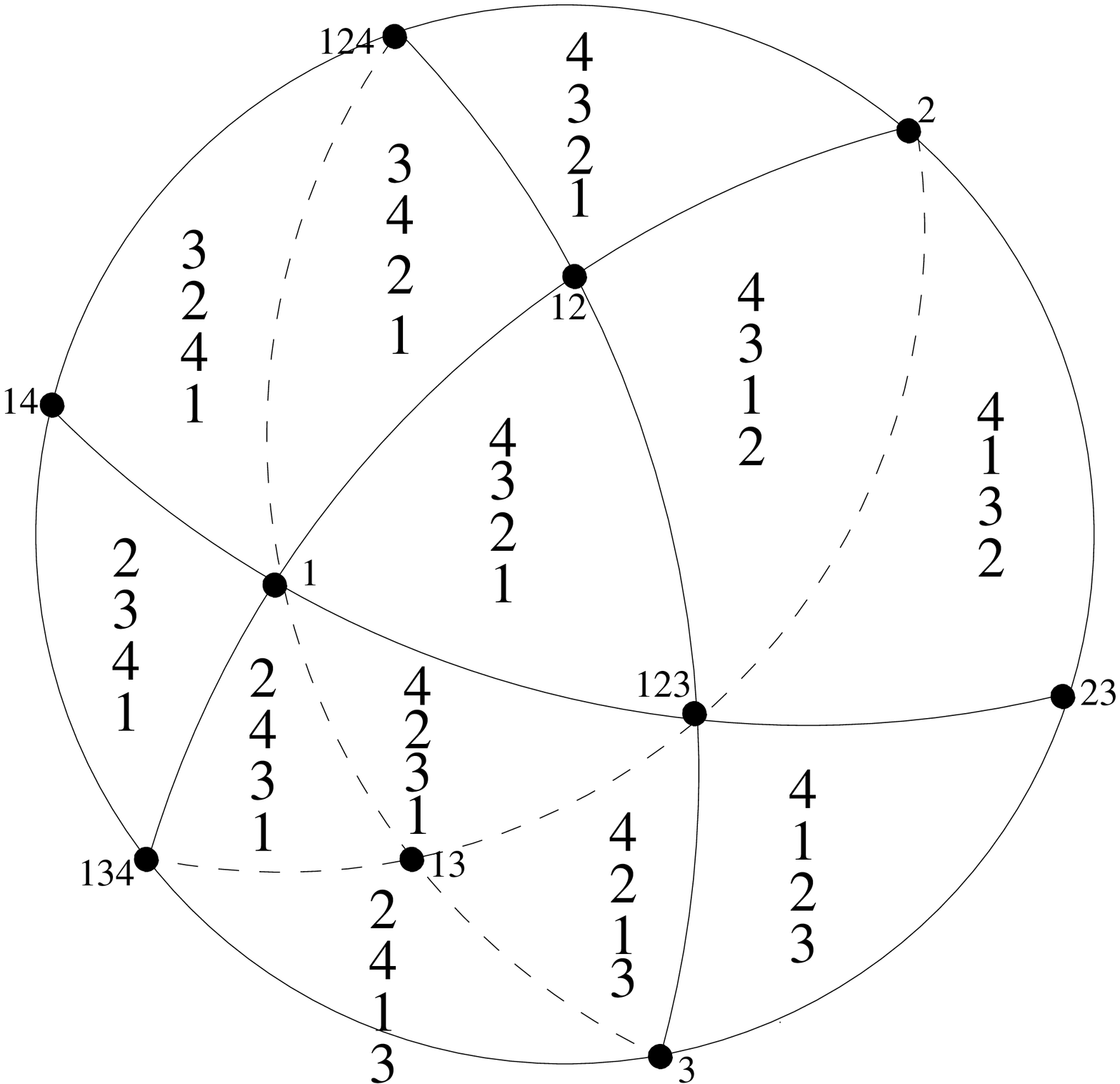}
\quad 
\NNN(\PPP_{\BBB(G)})
\epsfxsize=40mm
\epsfbox{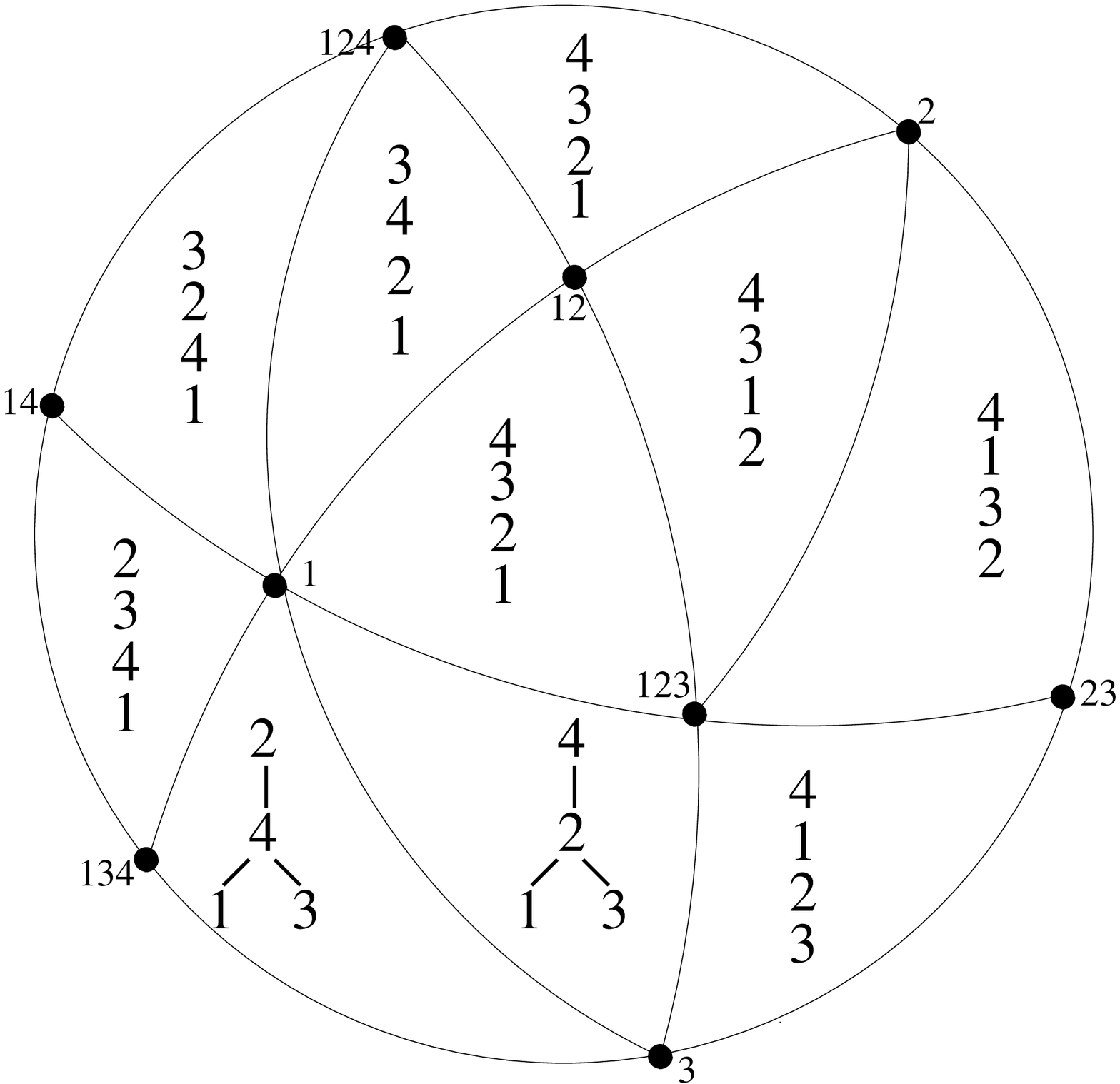}
$$
\caption{Normal fans for 
the graphic zonotope $\ZZZ_G$, 
the permutohedron $\ZZZ_{K_n}$, and
the graph associahedron $\PPP_{\BBB(G)}$, for the graph $G$ shown,
having $n=4$ vertices.
The normal fans live in $\RR^4$, but are depicted inside the hyperplane
$x_1+x_2+x_3+x_4=0$ via their intersection with the hemisphere
of the unit sphere in which $x_1 \geq x_4$.  Note that 
$\NNN(\ZZZ_{K_n})$ refines $\NNN(\PPP_{\BBB(G)})$, and the latter
refines $\NNN(\ZZZ_{G})$.}
\label{normal-fan-figure}
\end{figure}

\begin{proposition}
Given a poset $P$ on $\{1,2,\ldots,n\}$,
with Hasse diagram $G$, let $\omega$ be
the acyclic orientation having $P$ as its
transitive closure.

Then the simplicial complex $\Delta_p$ having
$I_{\Delta_P} = I^\init_P$ describes the triangulation
of the $P$-partition maximal cone $\NNN_\omega$ 
in the fan $\NNN(\ZZZ_G)$ by cones of 
the normal fan $\NNN(\PPP_{\BBB(G)})$.  
\end{proposition}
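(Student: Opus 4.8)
The plan is to derive the statement from Proposition~\ref{prop:Minkowski-sum}(iv), which says that $\NNN(\PPP_{\BBB(G)})$ refines $\NNN(\ZZZ_G)$. It follows that the cones of $\NNN(\PPP_{\BBB(G)})$ that are contained in the single maximal cone $\NNN_\omega$ of $\NNN(\ZZZ_G)$ together cover $\NNN_\omega$ and form a fan subdividing it; since the graph-associahedron $\PPP_{\BBB(G)}$ has a simplicial normal fan, this subdivision is a triangulation. Everything then reduces to identifying the face poset of this triangulation with the simplicial complex $\Delta_P$, i.e. matching (a) the rays of $\NNN(\PPP_{\BBB(G)})$ lying inside $\NNN_\omega$ with the vertex set $\Jconn(P)$ of $\Delta_P$, and (b) the subsets of such rays that span a cone of $\NNN(\PPP_{\BBB(G)})$ with the faces of $\Delta_P$. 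There is one bookkeeping subtlety, concerning the common lineality of these fans and the full poset $P$ as a vertex of $\Delta_P$; I postpone it to the last paragraph and ignore it until then.

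For (a): a ray $\RR_{\geq 0}\chi_J$ of $\NNN(\PPP_{\BBB(G)})$ is indexed by a subset $J\subseteq\{1,\dots,n\}$ with $G|_J$ connected, and it lies inside $\NNN_\omega=\{f\in\RR^n:f_i\geq f_j\text{ whenever }i<_P j\}$ precisely when the vector $\chi_J$ satisfies these inequalities, i.e. precisely when $J$ is an order ideal of $P$. For an order ideal $J$ the induced subgraph $G|_J$ coincides with the Hasse diagram of $P|_J$ (downward-closedness of $J$ rules out ``shortcuts'' through $P\setminus J$), so the condition ``$G|_J$ connected'' becomes ``$J$ is a connected order ideal of $P$''. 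Hence the rays of $\NNN(\PPP_{\BBB(G)})$ inside $\NNN_\omega$ are exactly those indexed by $\Jconn(P)$, which is the vertex set of $\Delta_P$.

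For (b): by the description of $\NNN(\PPP_{\BBB(G)})$ recalled before the proposition, a collection $\{J_1,\dots,J_t\}$ of rays spans a cone exactly when it is a face of the nested set complex $\Delta_{\BBB(G)}$ --- that is, the $J_i$ pairwise intersect trivially (disjoint or nested) and, in addition, $J_i\cup J_k\notin\BBB(G)$ whenever $J_i\cap J_k=\varnothing$. The key point is that this extra disjointness condition is automatic when the $J_i$ are order ideals: two disjoint order ideals $J_i,J_k$ of $P$ have no Hasse edge between them, since such an edge would be a covering relation $a\lessdot_P b$ with, say, $a\in J_i$ and $b\in J_k$ (the case $b\lessdot_P a$ being symmetric), and then $a<_P b\in J_k$ would force $a\in J_k$, contradicting $J_i\cap J_k=\varnothing$; therefore $G|_{J_i\cup J_k}$ is the disjoint union of the nonempty graphs $G|_{J_i}$ and $G|_{J_k}$, hence disconnected, so $J_i\cup J_k\notin\BBB(G)$. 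Consequently a set of connected order ideals of $P$ spans a cone of the triangulation if and only if its members pairwise intersect trivially, which is exactly the defining condition for a face of $\Delta_P$. Together with (a), this matches the two face posets, and since all the cones are simplicial the subdivision is a genuine triangulation ``described'' by $\Delta_P$.

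The main obstacle --- and the only real subtlety --- is the common lineality of the fans $\NNN(\ZZZ_G)$ and $\NNN(\PPP_{\BBB(G)})$, spanned by the characteristic vectors $\chi_{P_1},\dots,\chi_{P_c}$ of the connected components $P_1,\dots,P_c$ of $P$: these directions are not ``rays'' of $\NNN(\PPP_{\BBB(G)})$, yet the $P_k$ are bona fide vertices of $\Delta_P$ (and cone points of it, since every connected order ideal intersects each $P_k$ trivially). The clean way to handle this is to first reduce to the case $c=1$ of a connected poset --- for a disjoint union of posets the graphic zonotope, the graph-associahedron, the cone $\NNN_\omega$ all factor as products, and $\Delta_P$ as a join, over the connected components, so the general case follows from the connected one --- and then to pass to the quotient $\RR^n/\RR(1,\dots,1)$ (equivalently, to the slice $\sum_i x_i=0$). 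There $\NNN_\omega$ becomes a pointed $(n-1)$-dimensional cone, $\NNN(\PPP_{\BBB(G)})$ becomes an honest simplicial fan with rays indexed by the proper connected order ideals of $P$, and steps (a) and (b) identify its triangulation of $\NNN_\omega$ with the link $\Delta'$ of the vertex $P$ in $\Delta_P$. Coning back up by adjoining the lineality direction $\chi_P=(1,\dots,1)$ --- which lies on every maximal cone of the subdivision of $\NNN_\omega$ --- recovers the original statement, with the cone point $P$ of $\Delta_P=\chi_P\ast\Delta'$ accounting exactly for that one-dimensional lineality.
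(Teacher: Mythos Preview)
Your proof is correct and follows essentially the same approach as the paper: reduce to the connected case via a join decomposition, identify the rays of $\NNN(\PPP_{\BBB(G)})$ inside $\NNN_\omega$ with $\Jconn(P)$, and then check that the extra nested-set condition ``$J_i\cup J_k\notin\BBB(G)$ when disjoint'' is automatic for order ideals. The paper packages the face-matching as ``both complexes are flag, so compare $1$-skeleta,'' whereas you invoke the pairwise description of nested-set faces directly, but this amounts to the same thing; if anything, your last paragraph handles the lineality direction and the cone-point $P\in\Jconn(P)$ more carefully than the paper, which silently identifies the vertices of $\Gamma_P$ with all of $\Jconn(P)$ even though $J=\{1,\dots,n\}$ is not a ray of $\NNN(\PPP_{\BBB(G)})$.
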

\begin{proof}
Temporarily let $\Gamma_P$ denote the simplicial
complex describing the triangulation of $\NNN_\omega$ 
in the fan $\NNN(\ZZZ_G)$ by cones of the normal fan 
$\NNN(\PPP_{\BBB(G)})$.  We wish to show that $\Delta_P \cong \Gamma_P$.
As a preliminary reduction, assume that $P$ is connected: when
$P$ is a disjoint union $P_1 \sqcup P_2$ of two other posets,
one can check that 
$$
\begin{aligned}
\Delta_{P} &\cong \Delta_{P_1} * \Delta_{P_2} \\
\Gamma_{P} &\cong \Gamma_{P_1} * \Gamma_{P_2} 
\end{aligned}
$$
where here $*$ denotes the {\it simplicial join}
operation; cf. \cite[Remark 6.7]{PostnikovRWilliams}.

Since $\Delta_P$ is a flag complex, it suffices to check
that $\Gamma_P$ is also a flag complex, and that their
$1$-skeleta (=vertices and edges) are isomorphic.

Recall that $\Delta_P$ has vertex set given by
the set $\Jconn(P)$ of connected order ideals $J$ in $P$, 
with two vertices $\{J_1, J_2\}$ spanning an edge of $\Delta_P$
if and only if the order ideals $J_1, J_2$ intersect trivially
(either disjoint, or nested).

On the other hand, $\Gamma_P$ is the 
subcomplex of the nested set complex $\Delta_{\BBB(G)}$
indexing the cones of $\NNN(\PPP_{\BBB(G)})$
that lie in the cone $\NNN_\omega$.  Note that a
cone lies in $\NNN_\omega$ if and only if each of its
extreme rays lies in $\NNN_\omega$.  Thus $\Gamma_P$ is a
vertex-induced subcomplex of the flag complex
$\Delta_{\BBB(G)}$, and hence is itself flag.

Vertices of $\Delta_{\BBB(G)}$ are
indexed by nonempty proper subsets $J$ of $\{1,2,\ldots,n\}$
for which $G|_J$ is connected.  The extra condition that
$J$ indexes a ray inside $\NNN_\omega$ is equivalent
to $\chi_J$ being a weak $P$-partition, that is,
$J$ is an order ideal of $P$.  Thus vertices of 
$\Gamma_P$ are indexed by the connected order ideals $J$
in $\Jconn(P)$, the same indexing set as the
vertices of $\Delta_P$.

The condition for a pair of connected order ideals $\{J_1,J_2\}$ 
to index an edge in the nested set complex $\Delta_{\BBB(G)}$
is that they intersect trivially (either disjointly or nested) 
and if disjoint then they furthermore have
$G|_{J_1 \cup J_2}$ not in $\BBB(G)$, so that $J_1 \cup J_2$ is not
a connected order ideal.  But it is impossible for
two order ideals $J_1, J_2$ of $P$ to be disjoint
and have $J_1 \cup J_2$ a connected ideal:  this
would imply that there is some Hasse diagram edge
connecting them, giving
an order relation between some pair of elements $\{j_1,j_2\}$
with $j_i$ in $J_i$ for $i=1,2$, and would force either $j_1$ or $j_2$ to lie in
the intersection $J_1 \cap J_2$.  Thus $\{J_1,J_2\}$ index an edge of $\Gamma_P$
if and only if they intersect trivially, that is,
if and only if they index an edge of $\Delta_P$.  Hence $\Delta_P$
and $\Gamma_P$ are isomorphic flag complexes.
\end{proof}

The maximal cones in $\PPP_{\BBB(G)}$ correspond to what were called
{\it $\BBB(G)$-trees} in \cite[\S 7]{Postnikov} and \cite[\S 8.1]{PostnikovRWilliams}.
This means that the maximal simplices of the triangulation $\Delta_P$
will correspond to what we might call {\it $P$-forests}:
forest posets $F$ in which every principal ideal $F_{\leq i}$ 
is a connected order ideal of $P$, and
whenever $i,j$ are incomparable in the poset $F$, one has that the
ideal $F_{\leq i} \cup F_{\leq j}$ of $P$ is disconnected.

\begin{example}
For the poset $P$ on $\{1,2,3,4\}$ in which $1,3<_P 2,4$,
the Hasse diagram is the graph $G$ shown in Figure~\ref{normal-fan-figure}.
The acyclic orientation $\omega$ of $G$ whose transitive closure gives
$P$ corresponds to a quadrangular cone $\NNN_\omega$ which is the lowest on the page among
the three quadrangular cones depicted in $\NNN(\ZZZ_G)$.
This cone $\NNN_\omega$ is subdivided 
into four cones in $\NNN(\ZZZ_{K_4})$, corresponding
to the set of linear extensions $\LLL(P)=\{1324,1342,3124,3142\}$.
On the other hand, the cone $\NNN_\omega$ is subdivided 
into only two cones in $\NNN(\PPP_{\BBB(G)})$,
labelled in the figure by the two $\BBB(G)$-trees $1,3 < 2 < 4$
and $1,3 < 4 < 2$.
\end{example}

Note that unlike the usual triangulation of the cone
$\NNN_\omega$ of $P$-partitions
corresponding to the order complex $\Delta \JJJ(P)$ that was discussed in 
Section~\ref{sec:traditional-viewpoint},
the  maximal cones in the triangulation $\Delta_P$ 
are not unimodular.  In fact, each such maximal cone
corresponding to some $P$-forest $F$
will decompose into $|\LLL(F)|$ different unimodular
cones from the triangulation by $\Delta \JJJ(P)$, that is,
from the normal fan $\NNN(\ZZZ_{K_n})$ of the permutohedron.

%%%%%%%%%%%%%%%%%%%%%%%%%%%%%%%%%%%%%%%%
\section{Other questions}
\label{questions-section}
%%%%%%%%%%%%%%%%%%%%%%%%%%%%%%%%%%%%%%%%

We collect here some questions and problems left unresolved in this work.

\subsection{Resolving the rings $R_P$ over $S$
and Ferrers posets}

The following problem is motivated
by our desire to count linear extensions for more posets $P$.

\begin{problem}
Find more posets $P$ where one
can compute $\Hilb(R_P,\xx)$, possibly by
writing down an explicit $S$-resolution of $R_P$,
or $\gr(R_P)$ or $S/I^\init_p$.
\end{problem}

One particular instance originally motivated us, but has proven elusive
so far.  Given a number partition $\lambda$, consider the finite poset $P=P_\lambda$ 
on the set of squares $(i,j)$ in the Ferrers diagram for $\lambda$, 
partially ordered componentwise, with the square $(1,1)$ as maximum element.
Gansner \cite{Gansner} showed how the
Hillman-Grassl algorithm proves an interesting hook formula that
counts weak $P$-partitions $f$ by an intermediate multigrading, where one specializes
the variable $x_{i,j}$ associated with square $(i,j)$ to the variable $y_{i-j}$
recording its {\it content} $i-j$:

\begin{equation}
\label{colored-hook-formula}
\sum_{f \in \AAA^\weak(P_\lambda)} \prod_{(i,j) \in \lambda} y_{i-j}^{f(i,j)} 
= \prod_{(i,j) \in \lambda } \left( 1-\prod_{(i',j') \in H(i,j)} y_{i'-j'} \right)^{-1} .
\end{equation}
where here $H(i,j)$ denotes the set of squares of $\lambda$ lying in the
hook of square $(i,j)$.

\begin{question}
For these posets $P=P_\lambda$,
can we explain \eqref{colored-hook-formula} via an analysis of the 
structure of the ring $R_P$, or $\gr(R_P)$ or $S/I^\init_P$
that leads to its Hilbert series?  
Is one of these rings easy to resolve over $S$, for example?
\end{question}

\subsection{Further structure for the ideal $\III_P$ of $P$-partitions}

It can be shown (e.g., using \cite[Proposition 3]{MillerR}) that, 
for any poset $P$ on $\{1,2,\ldots,n\}$,
the ideal $\III(P)$ of $P$-partitions is a Cohen-Macaulay module,
either over the ring $R_P$ of weak $P$-partitions, or over
the polynomial algebra $S=k[U_J]_{J \in \Jconn(P)}$.
This raises several related questions about the modules $\III(P)$, 
beginning with the issue of their minimal generating sets, raised
in Remark~\ref{rmk:non-minimal-generators-remark}.

\begin{problem}
Describe the minimal monomial generators for $\III(P)$
over $R_P$.
\end{problem}

\noindent
Beyond minimal generating sets, one ultimately wants the following.
\begin{problem}
Given any poset $P$ on $\{1,2,\ldots,n\}$, describe for  $\III(P)$
\begin{enumerate}
\item[(i)]
an explicit resolution of $\III_P$ as an $S$-module or an
$R_P$-module, or both, and 
\item[(ii)]
the multigraded Betti numbers in the {\it minimal} free 
resolutions, that is, the multigraded vector spaces
$
\Tor^{S}_*(\III_P,k).
$
and
$
\Tor^{R_P}_*(\III_P,k).
$
\end{enumerate}
\end{problem}
Of course, there are similar questions one can
ask about the associated graded ring $\gr(R_P)$ and
associated graded modules $\gr(\III_P)$ over it,
and over $S$.

\begin{example}
Consider the poset $P=P_2$ from Example~\ref{ex:three-little-posets}, 
having order relations $2 <_P 1,3$.  Then $S=k[U_2,U_{12},U_{23},U_{123}]$, and
\begin{equation}
\label{example-S-hilbert-series}
\begin{aligned}
\Hilb(S,\xx) &=\frac{1}
      {(1-x_2)(1-x_1 x_2)(1-x_2 x_3)(1-x_1 x_2 x_3)} \\
\Hilb(S,q) &=\frac{1}
      {(1-q)(1-q^2)^2(1-q^3)} .
\end{aligned}
\end{equation}
It turns out that the generating set $\{x_2, x_2 x_3\}$
described in \eqref{non-minimal-generators} for the 
ideal $\III_P$ {\it is} minimal in this case, leading to the following
minimal free $S$-resolution
$$
\begin{array}{rccccl}
              &S(-(0,2,1)) &                 & S(-(0,1,0))&                &        \\
0 \rightarrow &\oplus       & \overset{A}{\longrightarrow} & \oplus     &\longrightarrow & \III_P   \\
              & S(-(1,2,1)) &              & S(-(0,1,1))&                &        \\
              &             &                & e_2        &\mapsto         &x_2  \\
              &             &                & e_{23}      &\mapsto         &x_2 x_3 \\
\end{array}
$$
where
$$
A=
\left[\begin{matrix}
U_{23} & -U_{123} \\
-U_2  & U_{12}
\end{matrix} \right].
$$
Together with the Hilbert series for $S$
given in \eqref{example-S-hilbert-series}, this allows one to
calculate 
$$
\begin{aligned}
\Hilb(\III_P,\xx) &= \frac{x_2+x_2x_3 - x_2^2x_3 - x_1x_2^2x_3 }
                        {(1-x_2)(1-x_1 x_2)(1-x_2 x_3)(1-x_1 x_2 x_3)} \\
\Hilb(\III_P,q) &= \frac{q+q^2 - (q^3+q^4)}{ (1-q)(1-q^2)^2(1-q^3) } 
                = \frac{q+q^2} { (1-q)(1-q^2)(1-q^3) } . \\
\sum_{w \in \LLL(P)} q^{\maj(w)} &= q^2 + q^3.
\end{aligned}
$$

%Alternatively, one can instead consider
%$R_P \cong S/(U_{12} U_{23} - U_2U_{123})$,
%so that 
%$$
%\Hilb(R,\xx) = 
%\frac{1-x_1 x_2^2 x_3}
%      {(1-x_2)(1-x_1 x_2)(1-x_2 x_3)(1-x_1 x_2 x_3)}.
%$$
%and write down an explicit minimal free $R_P$-resolution of $\III_P$, which is
%infinite but periodic in this case:
%$$
%\begin{array}{rccccccccl}
% &R(-(1,4,2)) &              &R(-(1,3,1)) &             &R(-(0,2,1)) &            & R(-(0,1%,0))&    &        \\
%\cdots\longrightarrow &\oplus      & \overset{A}{\longrightarrow} &\oplus      & \overset{B%}{\longrightarrow} &\oplus      & \overset{A}{\longrightarrow}& \oplus     &\rightarrow & \%III_P  \\
% & R(-(2,4,2)) &           & R(-(1,3,2)) &          &R(-(1,2,1)) &          & R(-(0,1,1))& %               &    \\
% & & &    &            &             &                & e_2        &\mapsto         &x_2  \%\
%& & &    &            &             &                & e_{23}      &\mapsto         &x_2 x_%3 \\
%\end{array}
%$$
%in which
%$$
%B=\left[\begin{matrix}
%U_{12} & U_{123} \\
%U_2  & U_{23}
%\end{matrix} \right].
%$$
\end{example}

Lastly, given Stanley's characterization for when $R_P$ is
Gorenstein discussed in Section~\ref{non-natural-labelings-section} 
above, it is reasonable to ask the following.
\begin{problem}
Characterize when $\III_P$ is Gorenstein, that is, when
one has an isomorphism $\Omega(\III_P) \cong \III_P$, up to a
shift in grading. 
\end{problem}
\noindent
This should be approachable, as the
canonical module $\Omega(\III_P)$ has a simple
description (via \cite[Proposition 3]{MillerR}):  
it is the ideal within $R_P$ spanned $k$-linearly by
the monomials $\xx^f$ as $f$ runs through
those weak $P$-partitions $f:P \rightarrow \NN$ for which
$$
\begin{aligned}
f(i) \geq_\NN f(j) &\text{ if }i \lessdot_P j\\
f(i) >_\NN f(j)    &\text{ if }i <_\NN j.
\end{aligned}
$$

%%%%%%%%%%%%%%%%%%%%%%%%%%%%%%%%%%%%
\section*{Acknowledgements}
The authors thank C.E. Csar and Steven Sam for
helpful suggestions.
%%%%%%%%%%%%%%%%%%%%%%%%%%%%%%%%%%%%

%%%%%%%%%%%%%%%%%%%%%%%%%%%%%

\end{document}